\newtheorem{problem}{Problem}
\newtheorem{theorem}{Theorem}[section]
\newtheorem{proposition}[theorem]{Proposition}
\newtheorem{definition}[theorem]{Definition}
\newtheorem{corollary}[theorem]{Corollary}
\newtheorem{lemma}[theorem]{Lemma}
\newtheorem{remark}[theorem]{Remark}
\newcommand{\mL}{{\cal L}}
\newcommand{\R}{\mathbb{R}}
\newcommand{\El}{{\cal L}}
\newcommand{\beq}{\begin{equation}}
\newcommand{\eeq}{\end{equation}}
\newcommand{\C}{\mathbb{C}}
\newcommand{\N}{\mathbb{N}}
\newcommand{\mD}{{\mathcal{D}}}
\newcommand{\mF}{{\cal F}}
\newcommand{\mM}{{\cal M}}
\newcommand{\mW}{{\cal W}}
\title {\bf Identification of a convolution kernel in a control problem for the heat equation with a boundary memory term
\thanks{This work partially was supported by the
MIUR-PRIN Grant 20089PWTPS
{\it Analisi Matematica nei Problemi Inversi per le Applicazioni} }}
\author{Cecilia Cavaterra\\
Dipartimento di Matematica, Universit\`{a} degli Studi di Milano\\
Via Saldini 50, 20133 Milano, Italy
\\
cecilia.cavaterra@unimi.it
\and
Davide Guidetti\\
Dipartimento di Matematica, Universit\`{a} di Bologna\\
Piazza di Porta San Donato 5, 40126 Bologna, Italy
\\
davide.guidetti@unibo.it}
\date{}
\begin{document}

\maketitle

\begin{abstract}
We consider the evolution of the temperature $u$ in a material with thermal memory
characterized by a time-dependent convolution kernel $h$.
The material occupies a bounded region $\Omega$ with a feedback device controlling
the external temperature located on the boundary $\Gamma$.
Assuming both  $u$ and $h$ unknown, we formulate an inverse control problem for an
integrodifferential equation with a nonlinear and nonlocal boundary condition.
Existence and uniqueness results of a solution to the inverse problem are proved.
\end{abstract}

\vskip0.2truecm
\noindent
{\bf Keywords.} Integrodifferential  equations, automatic control problems, inverse problems.

\noindent
{\bf Mathematics Subject Classification (2010).} 35R30, 35K20, 45K05, 47J040, 93B52.

\section{Introduction}
In this paper we want to study the evolution of the temperature $u$ in a material with thermal memory,
occupying a bounded region $\Omega \subseteq \R^3$.
The memory mechanism is characterized by a time-dependent convolution kernel $h$.

Here we are interested in analyzing the heat exchange at the boundary $\Gamma:= \partial \Omega$ under the influence
of a thermostat, with boundary conditions of the third type. On account of the existing literature,
(see, e.g., \cite{CoGrSp1}, \cite{HoNiSp1} and their references), we are willing to formulate and study an automatic
control problem based on a feedback device located on the boundary $\Gamma$.
This device is prescribed by means of a quite general memory operator (in the following we shall give precise definitions).
Moreover, due to the presence of a memory term in the evolution equation for $u$, the past history involves also the boundary
condition that turns out to be of integrodifferential type.

Therefore, we introduce the following initial and boundary value problem:
\begin{problem}\label{P1}
Find $u$ of domain $Q_T$ such that
\begin{equation}
\left\{ \begin{array}{ll}
D_t u(t,x) = Au(t,x) + (h \ast Au)(t,x) + f(t,x), & (t,x) \in Q_T, \\ \\
Bu(t,x) + (h \ast Bu)(t,x) + q(t,x) = u_e(t,x) - u(t,x), & (t,x) \in \Sigma_T, \\ \\
u(0,x) = u_0(x), & x \in \Omega.
\end{array}
\right.
\end{equation}
where
\begin{equation}\label{eq1.2A}
Q_T:= (0, T) \times \Omega,
\end{equation}
\begin{equation}\label{eq1.3A}
\Sigma_T:= (0, T) \times \Gamma,
\end{equation}
and
\begin{equation}
(h \ast f)(t,x) := \int_0^t h(t-s) f(s,x) ds.
\end{equation}
\end{problem}
\noindent
Here $T >0$, $f : Q_T \to \mathbb{R}$ is the heat source, $u_0 : \Omega \to \mathbb{R}$ is the initial temperature,
$q$ accounts for the past history of $u$ on the boundary up to $t = 0$, while $u_e$ represents the temperature
of the external environment. Moreover, $A$ and $B$ are linear differential operators defined by
\begin{equation}
A = \sum_{i=1,j}^n D_{x_i} (a_{ij}(x) D_{x_j}), \quad x \in \Omega,
\end{equation}
\begin{equation}
B = \sum_{i=1}^n b_{i}(x) D_{x_i} + b_0(x), \quad x \in \Gamma.
\end{equation}
As already stated above,  the external control $u_e$ should be regulated by a feedback device based on measurements of $u$.
Suppose that we are able to measure the temperature by a real system of thermal sensors placed in some fixed positions over
$\Omega$ and/or $\Gamma$ as follows:
\begin{equation}\label{eqA1.7}
\mM(u)(t):= \int_\Omega \omega_1(x) u(t,x) dx + \int_{\Gamma} \omega_2 (y) u(t,y) d\sigma,
\end{equation}
where $\omega_1$ and $\omega_2$ are functions with domain $\Omega$ and $\Gamma$, respectively.
We consider a thermostat device modifying $u_e$, on account of $\mM(u)$, in this way  (see, e. g. \cite{HoNiSp1},
\cite{CoGrSp1} and their references):
\begin{equation}
u_e = \phi u_A + u_B \quad \mbox{\rm on } \Sigma_T.
\end{equation}
Here $u_B: \Sigma_T \to \mathbb{R}$ is a given reference boundary value (e.g. the external average temperature),
while $u_A : \Sigma_T \to \mathbb{R}$ is a (known) factor of the part of $u_e$ that can be controlled by our device.
The dynamic control is exerted through the function $\phi: [0, T]Ê\to \mathbb{R}$, solution to the problem
\begin{equation}\label{eq0.9}
\left\{\begin{array}{ll}
\epsilon \phi' + \phi = \mW (\mM (u))Ê+ u_C & \mbox{ in } [0, T], \\ \\
\phi(0) = \phi_0,
\end{array}
\right.
\end{equation}
where $u_C : [0, T] \to \mathbb{R}$ is a given function, $\epsilon$ is a positive parameter and $\phi_0 \in \mathbb{R}$.

The nonlinear operator $\mW$ completes the description of the feedback action. We assume that $\mW$ is a memory operator,
accordingly to the definition in \cite{Vi1}, Chap. III, to be precisely defined in the following.
Mathematical literature contains several examples of operators $\mW$ useful in applications:
we mention the generalized plays and Preisach operators (see \cite{Vi1}, Chaps. III-IV
and \cite{KrPo1}, Part 1).

Going back to the Cauchy problem for $\phi$, we formally deduce that $u_e$ is assigned by
\begin{equation}\label{eq0.10}
u_e = \mF(\mW (\mM(u))) \quad \mbox{\rm on } \Sigma_T,
\end{equation}
where
\begin{equation}\label{eqA1.11}
\mF(r)(t,y) :=    (E_1 \ast r)(t) u_A(t,y) + E_0(t,y), \quad (t,y) \in \Sigma_T,
\end{equation}
and
\begin{equation}\label{eqA1.12}
E_1(t):= \epsilon^{-1}Êe^{-t/\epsilon},
\end{equation}
\begin{equation}\label{eqA1.13}
E_0(t,y) = [ (E_1 \ast u_C)(t) + \epsilon \phi_0 E_1(t)]Êu_A(t,y) + u_B(t,y).
\end{equation}
Therefore, on account of (\ref{eq0.10}), the feedback nonlinear control problem reduces to a system with a nonlinear,
nonlocal boundary condition. In the paper \cite{CaCo1}, the authors studied a problem in the form (\ref{P1}), with
two possible choices of the memory operator: the relay switch operator or the Preisach operator.
Moreover, in the second part of the paper they studied the case in which they had also to identify
a time-dependent factor of the heat source.

Instead, here we assume that the memory kernel $h$ is unknown. In order to determine $h$ along with the temperature $u$,
we need an additional information: we assume to know the following quantity
\begin{equation}\label{eqA1.14}
\Phi(u(t)) := \int_\Omega \omega(x) u(t,x) dx, \quad  \text{for any } t \in [0, T],
\end{equation}
where $\omega$ is a properly smooth function, vanishing together with its first derivatives in $\Gamma$.

We can now formulate our inverse control problem:

\begin{problem}\label{P2}
Find $u$ of domain $Q_T$ and $h$ of domain $[0, T]$ such that
\begin{equation}\label{eqA1.15}
\left\{ \begin{array}{ll}
D_t u = Au + h \ast Au  + f, & \mbox{ in } Q_T, \\ \\
Bu  + h \ast Bu  + q  = \mF (\mW (\mM (u))) - u, &  \mbox{ on }  \Sigma_T, \\ \\
u(0,\cdot) = u_0, & \mbox{ in }  \Omega\\ \\
\Phi(u) = g & \mbox{ in } [0, T].
\end{array}
\right.
\end{equation}
\end{problem}

In order to solve our problem we need to apply the theory developed by Lions and Magenes in
\cite{LiMa1} (see also \cite{LiMa2}). To this aim, we have to settle all the functions,
the operators and the linear spaces involved in Problem 2 in the framework of a complex
context.
It is not difficult to realize that whence all the functions and coefficient appearing in
Problem 2 are real valued, then the real part of the solution turns out to be real valued as well.

Hence, from now on we will assume all the functions introduced before taking values in $\mathbb{C}$.

As far as the operator $\mW$ is concerned, even if in applications it is defined only for functions
$f: [0,T] \to \mathbb{R}$, nevertheless, whenever $f: [0,T] \to \mathbb{C}$, then we intend
$\mW (f)$ as $\mW(Re f) + i \mW(Im f )$.

More in details, a memory operator $\mW_\tau$ is characterized as follows.

We indicate by $C([0, \tau])$, $\tau \in \R^+$, the Banach space of continuous complex valued functions
of domain $[0, \tau]$, equipped with its standard norm. Then

\medskip

{\it (C1) $\forall \, \tau \in [0, T]$,   $\mW_\tau: C([0, \tau]) \cap \cap BV([0, \tau]) \to C([0, \tau]) \cap BV([0, \tau])$ is given, where $BV([0, \tau])$ stands for the class
of bounded variation functions.;

\medskip

(C2) if \, $0 \leq \tau_2 \leq \tau_1 \leq T$ and $f \in C([0, \tau_1]) \cap BV([0, \tau_1])$, then $W_{\tau_2}(f_{|[0, \tau_2]}) = [W_{\tau_1}(f)]_{|[0, \tau_2]}$;}

\medskip

{\it (C3) There exists $L \in \R^+$, such that $\forall \tau \in [0, T]$, $\forall f,g \in C([0, \tau]) \cap BV([0, \tau])$,
$$
\quad \|\mW_\tau(f) - \mW_\tau(g)\|_{C([0, \tau])} \leq L\|f - g\|_{C([0, \tau])}.
$$}

\medskip

\begin{remark}
{\rm Conditions which are alternative to (C1)-(C3) can be adopted, in order to obtain the conclusion of the main result of the paper,  Theorem  \ref{thA2.1}. A short discussion in this direction will be put
in the final Remark \ref{ref}.

}

\end{remark}

\medskip

On account of ${\it (C2)}$, given $f \in C([0, \tau_1]) \cap BV([0, \tau_1])$, then  $[\mW_{\tau_1}(f)]_{|[0, \tau_2]}$ depends only on $f_{|[0, \tau_2]}$.
Hence, if $f \in C([0, \tau]) \cap BV([0, \tau])$ for some $\tau \in [0, T]$, we shall loosely write $\mW(f)$ in alternative to $\mW_\tau(f)$.

We conclude this introduction by fixing the basic notations, recalling some well known definitions and facts, and outlining the organization of the paper.

Concerning the notation, we indicate with $\N$ and $\N_0$ the sets of positive and nonnegative integers, respectively. If $\beta \in \R$,
$[\beta]$ stand for its integer part and $\{\beta\} := \beta -[\beta]$.

We indicate with $C$ a positive constant which may be different from time to time. However, in a sequence of estimates, we write also $C_1, C_2, \dots$.
In order to stress the fact that the constant $C$ depends on $\alpha, \beta, \dots$, we shall write $C(\alpha,\beta,\dots)$.

We  indicate with $BV([0, T])$ the class of complex valued bounded variation functions with domain $[0, T]$.
If $E$ is a Banach space, $\alpha \in (0, 1)$ and $f : [0, T]Ê\to X$, we set
$$
[f]_{C^\alpha([0, T]; E)} := \sup_{0 \leq s < t \leq T} \frac{\|f(t) - f(s)\|_E}{(t-s)^\alpha},
$$ and, in case $[f]_{C^\alpha([0, T]; E)} < \infty$, we write $f \in C^\alpha([0, T]; E)$. If $E = \C$, we simply write $C^\alpha([0, T])$.

If $E$ and $F$ are normed spaces, we indicate with $\El(E,F)$ the space of linear bounded operators from $E$ to $F$.
If $E = F$, we simply write $\El(E)$. We  indicate with $E'$ the space of continuous antilinear functionals in $E$, equipped with its natural norm.

Let $\Omega$ be an open subset of $\R^n$. We consider the Sobolev spaces $H^m(\Omega)$ ($m \in \N_0$), defined as
$$H^m(\Omega) = \{u \in L^2(\Omega): D^\alpha u \in L^2(\Omega), |\alpha| \leq m\},$$
with $D^\alpha$ intended in the sense of distributions.
$H^m(\Omega)$ is a Hilbert space with the norm
$$
\|u\|^2_{H^m(\Omega)} := \sum_{|\alpha| \leq m} \|D^\alpha u\|_{L^2(\Omega)}^2.
$$
Let $\beta \in \R^+$. Then we define
$$
H^\beta(\Omega):= (H^{[\beta]}(\Omega), H^{[\beta]+1}(\Omega))_{\{\beta\},2},
$$
denoting with $(\cdot, \cdot)_{\theta,2}$ ($0 < \theta < 1$) the real interpolation functor.
This definition is equivalent to the one in \cite{LiMa1}, Chap. 1.9 (see \cite{Gr1}, 1.2) .
In the case $\Omega = \R^n$, $H^\beta(\Omega)$ admits a well known characterization in terms of
Fourier transform (see \cite{LiMa1}, Chap. 1.7).
If $\alpha \in \N_0^n$ and $|\alpha| \leq \beta$, $D^\alpha \in \mL(H^\beta(\Omega), H^{\beta-|\alpha|}(\Omega))$.

Given an open subset $\Omega \subseteq\mathbb{R}^n$ with boundary $\Gamma$, from now on we assume one of the following conditions

\medskip
{\it (H1)  $\Omega$ is bounded and lying on one side of its topological boundary $\Gamma\in
C^\infty$};

{\it (H1bis) $\Omega = \R^n_+$};

{\it (H1ter) $\Omega= \R^n$}.

\medskip
Then, first of all, one has
$$
H^\beta(\Omega) = \{U_{|\Omega} : U \in H^\beta (\R^n)\}
$$
and an equivalent norm is $\inf\{\|U\|_{H^\beta(\R^n)}: U_{|\Omega} = u\}$.
Moreover,  $C^\infty (\overline {\Omega})$ is dense in $H^\beta(\Omega)$ (see \cite{LiMa1}, Chap. 1, Theorems 9.2, 9.3)
and it is a space of pointwise multipliers in it.

We can also define (by local charts) the spaces $H^\beta(\Gamma)$.
One can verify that, if $j \in \N_0$ and $\beta > j + \frac{1}{2}$,
the map $u \to\frac{\partial^j u}{\partial \nu^j}$, from $C^\infty (\overline \Omega)$ to $C^\infty (\Gamma)$,
can be extended to an element of $\mL (H^\beta(\Omega), H^{\beta - j - 1/2}(\Gamma))$.

If $\beta \geq 0$, we indicate with $H^\beta_0(\Omega)$ the closure of $\mD(\Omega):=C_0^\infty(\Omega)$ in  $H^\beta(\Omega)$.
It is known that, in case $\beta \leq 1/2$, $H^\beta_0(\Omega) = H^\beta(\Omega)$ (see \cite{LiMa1}, Chap. 1, Theorem 11.1).
In case $0 \leq \beta < 1/2$, the trivial extension operator with $0$ outside $\Omega$ belongs to $\mL(H^\beta(\Omega) , H^\beta(\R^n))$
(see \cite{LiMa1}, Theorem 11.4).

Now, for $\beta \geq 0$, we define
\begin{equation}\label{eq1.1A}
H^{-\beta}(\Omega):= H_0^\beta(\Omega)'.
\end{equation}
Every element $f$ of $L^2(\Omega)$ will be always identified with the functional
$$g \to \int_{\Omega} f(x) \overline{g(x)} dx$$
and, with this identification, $L^2(\Omega) \hookrightarrow H^{-\beta}(\Omega)$, $\forall \beta \geq 0$.
We observe that, as (by definition) $\mD(\Omega)$ is dense in $H_0^\beta(\Omega)$, $H^{-\beta}(\Omega)$ is a space of distributions.
One can show that, if $\beta \in \R$, $\alpha \in \N_0^n$ and $\beta - |\alpha| \not \in \{-1/2, -3/2, ...\}$, the derivative operator
$f \to D^\alpha f$ maps $H^\beta(\Omega)$ into $H^{\beta-|\alpha|}(\Omega)$ (\cite{LiMa1}, Chap. 1, Proposition 12.1).

We shall need also Sobolev spaces with values in a certain complex Hilbert spaces $V$, with scalar product $(\cdot, \cdot)_V$.
In this more general situation, we limit ourselves to consider the case $n = 1$ with $\Omega = (a,b) \subset \R, \, a <b $.
In case $\beta \geq 0$, $H^\beta(a,b; V)$ can be defined  similarly to the scalar valued situation
(see \cite{LiMa1}, Chap. 1, 2.2, \cite{LiMa2}, Chap. 4, Sec. 2.1) and the aforementioned properties can be extended
without much difficulty, employing the theory of vector valued distributions.

Further properties of Sobolev spaces will be recalled in Section \ref{seA3}.

Let $T \in \R^+$. If $\alpha, \beta \in [0, \infty)$, we set
\begin{equation}
H^{\alpha,\beta}(Q_T):= H^\alpha(0, T; L^2(\Omega)) \cap L^2(0, T; H^\beta(\Omega)).
\end{equation}
This is a Hilbert space with the norm
\begin{equation}
\|f\|_{H^{\alpha,\beta}(Q_T)}^2:= \|f\|_{H^{\alpha}(0, T; L^2(\Omega))}^2 +  \|f\|_{L^2(0, T; H^\beta(\Omega))}^2.
\end{equation}
Analogously, we define
\begin{equation}
H^{\alpha,\beta}(\Sigma_T):= H^\alpha(0, T; L^2(\Gamma)) \cap L^2(0, T; H^\beta(\Gamma)),
\end{equation}
which is a Hilbert space with the norm
\begin{equation}
\|f\|_{H^{\alpha,\beta}(\Sigma_T)}^2:= \|f\|_{H^{\alpha}(0, T; L^2(\Gamma))}^2 +  \|f\|_{L^2(0, T; H^\beta(\Gamma))}^2.
\end{equation}
We collect the following facts, which will be crucial for us:
\begin{theorem}\label{th1.1}
Assume that (H1) holds. Let $\alpha, \beta \in [0, \infty)$. Then the following propositions hold.

(I) If $ \max\{\alpha, \beta\} \leq 1/2$, then $\mD(Q_T)$ is dense in $H^{\alpha,\beta}(Q_T)$, so that its dual space  $H^{\alpha,\beta}(Q_T)'$
is a space of distributions in $Q_T$.

(II) If $j \in \N_0$ and $\beta > j + 1/2$, for $u \in H^{\alpha,\beta}(Q_T)$, $\gamma \in \N_0^n$ and $|\gamma| \leq j$,  we may define
$D_x^\gamma u_{|\Sigma_T}$, and $u \to D_x^\gamma u_{|\Sigma_T}$ is a continuous linear mapping from $H^{\alpha,\beta}(Q_T)$
to $H^{\alpha(1-\frac{j+1/2}{\beta}), \beta - j - 1/2}(\Sigma_T)$.

(III) If $k \in \N_0$ and $\alpha > k+1/2$, we may define $D_t^k u(0,\cdot)$, and $u \to D_t^k u(0,\cdot)$ is a continuous linear
mapping from $H^{\alpha,\beta}(Q_T)$ to $H^{(1-\frac{k+1/2}{\alpha})\beta}(\Omega)$.

\end{theorem}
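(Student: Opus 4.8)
The three assertions are anisotropic trace and density results of the kind systematically developed in \cite{LiMa2}, Chap. 4. My plan is to reduce each to two ingredients: the abstract vector--valued trace theorem of \cite{LiMa1}, Chap. 1, and the interpolation identity for the Sobolev scale. Concretely, thanks to (H1) and the extension property recalled above, the real interpolation functor (which for the present Hilbert couples agrees with complex interpolation) satisfies
$$ (H^\beta(\Omega),L^2(\Omega))_{\theta,2} = H^{(1-\theta)\beta}(\Omega), \qquad \theta \in (0,1), $$
as follows by reiteration from the very definition $H^\beta(\Omega)=(H^{[\beta]}(\Omega),H^{[\beta]+1}(\Omega))_{\{\beta\},2}$. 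In all three parts the trace operators are first defined on the smooth functions, which are dense in $H^{\alpha,\beta}(Q_T)$ by mollification and the density of $C^\infty(\ov\Omega)$ in $H^\beta(\Omega)$, and then extended by the continuity estimate.

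For (I) both exponents are subcritical, so there is no trace to obstruct density. Since $\beta\le 1/2$ gives $H^\beta_0(\Omega)=H^\beta(\Omega)$ (Theorem 11.1 of \cite{LiMa1}), functions compactly supported in $\Omega$ are dense in $H^\beta(\Omega)$, and likewise $\alpha\le 1/2$ makes functions compactly supported in $(0,T)$ dense in $H^\alpha(0,T)$. First I would approximate $u\in H^{\alpha,\beta}(Q_T)$ by functions compactly supported in $(0,T)\times\Omega$, using that truncation is bounded and convergent in both factor norms precisely because neither factor carries a trace at the critical index $1/2$; a joint mollification in $(t,x)$ then produces elements of $\mD(Q_T)=C_0^\infty(Q_T)$ converging in $H^\alpha(0,T;L^2(\Omega))$ and in $L^2(0,T;H^\beta(\Omega))$ simultaneously, hence in the intersection norm. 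Density of $\mD(Q_T)$ then forces $H^{\alpha,\beta}(Q_T)'\hookrightarrow\mD'(Q_T)$, which is the asserted conclusion.

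For (III) I would apply the abstract trace theorem to the couple $E_1=H^\beta(\Omega)\hookrightarrow E_0=L^2(\Omega)$: for $H^\alpha(0,T;E_0)\cap L^2(0,T;E_1)$ with $\alpha>k+1/2$, the map $u\mapsto D_t^k u(0,\cdot)$ is continuous into $(E_1,E_0)_{(k+1/2)/\alpha,\,2}$. The interpolation identity above with $\theta=(k+1/2)/\alpha<1$ gives
$$ (H^\beta(\Omega),L^2(\Omega))_{(k+1/2)/\alpha,\,2}=H^{(1-\frac{k+1/2}{\alpha})\beta}(\Omega), $$
which is exactly the target space. The integer case is the classical statement of \cite{LiMa1}, Chap. 1; the fractional order in time is recovered by interpolating between consecutive integer orders, the trace being defined on smooth functions and extended by continuity.

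Part (II) is the genuine obstacle, because the spatial trace couples the boundary geometry with the time--space anisotropy. Using (H1) ($\Gamma\in C^\infty$) I would take a finite partition of unity subordinate to boundary coordinate patches and flatten $\Gamma$ to $\{x_n=0\}$; a smooth change of variables preserves the $H^{\alpha,\beta}$ structure, so it suffices to treat the model half--space, with $\Sigma_T$ becoming $\{x_n=0\}$. Denoting by $\wtil{\,\cdot\,}$ the Fourier transform in the tangential variables $(t,x')$ (dual $(\tau,\xi')$) and by $\what{\,\cdot\,}$ the full transform (dual $(\tau,\xi',\xi_n)$), and writing $P:=(1+|\tau|^2)^\alpha+(1+|\xi'|^2+\xi_n^2)^\beta$ together with $I_m:=\int_{\R}|\xi_n|^{2m}P^{-1}\,d\xi_n$, the Cauchy--Schwarz inequality bounds the normal trace $v_m:=\partial_{x_n}^m u_{|x_n=0}$ by
$$ |\wtil v_m(\tau,\xi')|^2\le C\,I_m(\tau,\xi')\int_{\R}P(\tau,\xi',\xi_n)\,|\what u(\tau,\xi',\xi_n)|^2\,d\xi_n, $$
so that multiplying by $I_m^{-1}$ and integrating in $(\tau,\xi')$ recovers $\|u\|_{H^{\alpha,\beta}}^2$ and places $v_m$ in the space weighted by $I_m^{-1}$. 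The integral $I_m$ converges exactly when $\beta>m+1/2$ (guaranteed by $\beta>j+1/2$), and the scaling $\xi_n=(1+|\xi'|^2)^{1/2}\eta$ separates two regimes: $I_m^{-1}\asymp(1+|\xi'|^2)^{\beta-m-1/2}$ where the spatial frequency dominates and $I_m^{-1}\asymp(1+|\tau|^2)^{\alpha(1-\frac{m+1/2}{\beta})}$ where the time frequency dominates, whence $\partial_{x_n}^m u_{|\Sigma_T}\in H^{\alpha(1-\frac{m+1/2}{\beta}),\,\beta-m-1/2}(\Sigma_T)$. A general $D_x^\gamma$ with $|\gamma|\le j$ factors as a tangential derivative composed with the normal trace $\partial_{x_n}^{\gamma_n}$, $\gamma_n\le|\gamma|\le j$; since tangential derivatives only lower the spatial index, $D_x^\gamma u_{|\Sigma_T}$ lands in a space embedding continuously into $H^{\alpha(1-\frac{j+1/2}{\beta}),\,\beta-j-1/2}(\Sigma_T)$, the case $m=\gamma_n=j$ being extremal. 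Reassembling the patches and passing from the whole line back to $(0,T)$ by extension completes the argument. The points I expect to fight hardest are the bookkeeping of the two scaling regimes that pins down the exact exponents, and the verification that localization and flattening preserve the anisotropic norms.
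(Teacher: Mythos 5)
The paper does not actually prove Theorem \ref{th1.1}; it simply cites Lions--Magenes, vol. II, Chap. 4 (Sec. 2.1 for (I), Theorem 2.1 for (II)--(III)), and your argument is essentially a correct reconstruction of the proof given there: density by truncation and mollification using $H^\beta_0(\Omega)=H^\beta(\Omega)$ for $\beta\le 1/2$ in (I), the half-space Fourier/Cauchy--Schwarz computation of the trace weight $I_m$ with its two scaling regimes for (II), and the abstract vector-valued trace theorem combined with the interpolation identity $(H^\beta(\Omega),L^2(\Omega))_{\theta,2}=H^{(1-\theta)\beta}(\Omega)$ for (III). The only points you gloss --- the endpoint $\alpha=1/2$ or $\beta=1/2$ in (I), where sharp truncation is not a bounded multiplier on $H^{1/2}$ and the simultaneous approximation in the intersection norm requires the weighted-norm characterization, and the fact that tangential derivatives lower the time exponent as well as the spatial one in the anisotropic scale (which still lands in the asserted space since the case $|\gamma|=j$, $\gamma_n=j$ is extremal) --- are precisely the technicalities settled in the cited reference.
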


\begin{proof} See \cite{LiMa2}, Chap. 4: for (I),  Sec. 2.1; for (II) and (III),  Theorem 2.1.
\end{proof}
We pass to outline the structure of the paper.

In Section \ref{seA2}, we state in a precise way the problem we want to solve.  The main aim of the paper is to prove a result of existence
and uniqueness of a solution to Problem 2, precisely stated in Theorem \ref{thA2.1}. The principal difficulty
in the reconstruction of the convolution kernel $h$ lies in the fact that $h$ is solution to an integral equation of the first kind,
which is a severely badly posed problem.
Following a method which, at least for parabolic systems, was introduced in \cite{LoSi1}, we differentiate the parabolic equation and the
boundary condition with respect to time and we formulate Problem 3 for the unknowns $v:=D_tu$ and $h$.  This new problem turns out to be
equivalent to Problem 2 (Proposition \ref{prA2.1}). The differentiation in time has the effect of transforming the integral equation of the
first kind into an integral equation of the second type in the unknown $h$. Moreover, it forces to look for
$u \in H^2(0, T;  L^2(\Omega)) \cap H^1(0, T;  H^2(\Omega))$, implying $v \in H^{1,2}(Q_T)$, that is the classical
functional framework for parabolic systems (see Section \ref{seA3}).

In order to solve Problem \ref{P3}, it seems very hard looking directly for a solution $v$ in $H^{1,2}(Q_T)$.
The reason is that the traces on $\Sigma_T$ of the first order space derivatives are in $H^{1/4, 1/2}(\Sigma_T)$, see \ref{th1.1} (II).
Unfortunately, in Problem 3 there appears on the boundary a term depending on memory which is not Lipschitz continuous from $H^{1,2}(Q_T)$
into $H^{1/4, 1/2}(\Sigma_T)$ and prevents us from applying the contraction mapping theorem.
To overcome this difficulty, we apply the following strategy. First we look for a weak solution $v \in H^{3/4,3/2}(Q_T)$.
Applying classical results of Lions and Magenes (see \cite{LiMa2}), we may replace the space of trace functions $H^{1/4, 1/2}(\Sigma_T)$
with $L^2(\Sigma_T)$. This allows to employ the contraction mapping theorem and obtain existence and uniqueness of a global weak solution.
The final step is to show that $v$ belongs, in fact, to $H^{1,2}(Q_T)$.

Entering into the details, in Section \ref{seA3} we revise the weak parabolic theory developed in \cite{LiMa2} and add some technical
results on it and on vector valued Sobolev spaces and their duals. Concerning dual spaces, we have adopted an abstract setting, having in mind
the space $H^{1/4,1/2}(Q_T)'$, which has a role in the weak parabolic theory.

The technical Sections \ref{seA4} and \ref{seA5} are dedicated to convolution and memory terms, respectively.
We study in particular the convolution of an element $h$ in $L^1(0, T)$ by $z$, with $z$ belonging to
a proper class of vector valued distributions in $(0, T)$, a generalization of $H^{1/4,1/2}(Q_T)'$.

In Section \ref{se6} we study the weak version of Problem \ref{P3}.
Here  we employ a method, which was introduced in \cite{CoGu1}, allowing to treat the convolution as an affine operator
(see Remark \ref{re4.4}).

Finally, in Section \ref{se7} we show that  $v \in H^{1,2}(Q_T)$ and this completes the proof of Theorem \ref{thA2.1}.

\section{Statement of the problem and equivalent formulation}\label{seA2}

\setcounter{equation}{0}

Concerning system (\ref{eqA1.15}), we assume (H1) and moreover (see \cite{LiMa1}, Chap. 2, Sec. 1)

\vskip0.2truecm
{\it (H2) $A = \sum_{i,j=1}^n D_{x_i} (a_{ij}(x) D_{x_j}), \quad  a_{ij} \in C^\infty(\overline \Omega)$
\vskip0.2truecm
\quad \quad \! $B = \sum_{i=1}^n b_{i}(x') D_{x_i} + b_0(x'), \quad b_i \in C^\infty(\Gamma);
\quad\quad \sum_{i=1}^n b_{i}(x') \nu_i(x') \neq 0$, $\forall\, x' \in \Gamma$
\vskip0.2truecm
\quad \quad \, $e^{i\theta}D_t^2 + \sum_{i,j=1}^n D_{x_i} (a_{ij}(x) D_{x_j})$
is properly elliptic in $\R \times \Omega$, and covered by $B$ in $\R \times \Gamma$,
\vskip0.1truecm
\quad \quad \, $\forall \, \theta \in [-\pi/2, \pi/2]$

\medskip

(H3) $f \in H^{1,0}(Q_T)$

\medskip
(H4) $u_0 \in H^2(\Omega)$, $v_0:= Au_0 + f(0,\cdot) \in H^1(\Omega)$

\medskip
(H5) $q \in H^{5/4}(0, T; L^2(\Gamma))\cap H^1(0, T; H^{1/2}(\Gamma))$

\medskip

(H6) $\omega \in H^2_0(\Omega)$, \quad $g \in H^2(0, T)$

\medskip

(H7) $\Phi(Au_0) \neq 0$

\medskip

(H8) $\omega_1 \in L^2(\Omega)$, \,$\omega_2 \in L^2(\Gamma)$, \, $u_A, u_B  \in H^{5/4}(0, T; L^2(\Gamma)) \cap H^1(0, T; H^{1/2}(\Gamma))$,\,

\quad \quad \, $u_C \in C([0, T]) \cap BV([0, T])$

\medskip

(H9) $\Phi(u_0) = g(0)$,  \, $\Phi(v_0) = g'(0)$, \, $Bu_0 + q(0,\cdot) =  \phi_0 u_A(0,\cdot) + u_B(0,\cdot) - u_{0|\Gamma}$ }

\medskip

\begin{remark}
{\rm In (H2) we say that  $e^{i\theta} D_t^2 + \sum_{i,j=1}^n D_{x_i}(a_{ij}(x) D_{x_j})$ is covered by $B$  when the following condition
is satisfied (see \cite{LiMa1}, Ch. 2, Prop. 4.2).

Take an  arbitrary $x'$ in $\Gamma$, $(\tau,\xi)$ in $(\R \times \R^n) \setminus \{(0,\mathbf{0})\}$ with $\xi$ tangent to
$\Gamma$ in $x'$, $\xi'$ in $\R^n \setminus \{\mathbf{0}\}$, normal to $\Gamma$ in $x'$ and consider the ODE  problem
$$
\left\{\begin{array}{l}
- e^{i\theta} \tau^2 v(t) + \sum_{i,j=1}^n a_{ij}(x') (i\xi_i + \xi_i' D_t) (i\xi_j+ \xi_j' D_t) v(t) = 0, \\ \\
\sum_{i=1}^n b_i(x') (i\xi_i + \xi_i' D_t) v (0) = 1.
\end{array}
\right.
$$
Then such problem has a unique solution $v$ which is bounded in $\R^+$.}
\end{remark}

The main result of this paper is the following

\begin{theorem}\label{thA2.1}
Assume that (C1)-(C3) and (H1)-(H9) hold. Then (\ref{eqA1.15}) has a unique solution $(u,h)$ such that
$u \in H^2(0, T; L^2(\Omega)) \cap H^1(0, T; H^2(\Omega))$ and $h \in L^2(0, T)$.
\end{theorem}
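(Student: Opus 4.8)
The plan is to remove the ill-posed first-kind structure in $h$ by differentiating in time, to solve the resulting second-kind system by a contraction argument set in a \emph{weak} parabolic space, and finally to bootstrap the weak solution to the asserted regularity.

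First I would set $v := D_t u$, recovering $u = u_0 + 1 \ast v$ from the initial condition, where $1\ast v(t)=\int_0^t v(s)\,ds$. Applying $\Phi$ to the interior equation and using that $\omega \in H^2_0(\Omega)$ vanishes together with its first derivatives on $\Gamma$, two integrations by parts move $A$ off $u$ with no boundary contribution, giving $\Phi(Au)(t) = \int_\Omega (\widetilde A \omega)(x)\,u(t,x)\,dx =: \Psi(u)(t)$ with $\widetilde A\omega \in L^2(\Omega)$. Hence $\Phi(u)=g$ combined with $D_tu = Au + h\ast Au + f$ yields the first-kind Volterra equation $(h\ast\Psi(u))(t) = g'(t) - \Psi(u)(t) - \Phi(f)(t)$. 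Differentiating once in $t$ and using $\Psi(u)(0) = \Phi(Au_0)\neq 0$ from (H7) converts it into the second-kind equation
\[
h(t)\,\Phi(Au_0) = g''(t) - \Psi(v)(t) - \tfrac{d}{dt}\Phi(f)(t) - (h\ast \Psi(v))(t),
\]
solvable for $h \in L^2(0,T)$ in terms of $v$ by standard Volterra theory. Differentiating the interior equation and the boundary condition in time then produces a problem for $(v,h)$ (Problem \ref{P3}), with interior source $h\,Au_0 + h\ast Av + D_tf$, datum $v(0)=v_0$, and a boundary condition obtained by differentiating $Bu + h\ast Bu + q = \mF(\mW(\mM(u))) - u$. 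I would first record the equivalence with Problem 2 (Proposition \ref{prA2.1}): the passage $u\leftrightarrow v$ is a bijection once (H4) and (H9) guarantee compatibility of the initial and boundary traces.

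Next I would solve Problem \ref{P3} by a fixed point $\mathcal T$: given $\bar v$, I compute $\bar h$ from the Volterra equation, set $\bar u = u_0 + 1\ast\bar v$, assemble the boundary datum $\psi(\bar v) := \frac{d}{dt}\mF(\mW(\mM(\bar u))) - \bar v - \bar h\,Bu_0 - \bar h\ast B\bar v - q'$, and let $v=\mathcal T(\bar v)$ solve the linear parabolic problem $D_t v = Av + (\text{source})$, $Bv = \psi(\bar v)$, $v(0)=v_0$. The crucial choice is to seek a \emph{weak} solution $v\in H^{3/4,3/2}(Q_T)$, for which the Lions--Magenes transposition theory of Section \ref{seA3} only requires the boundary datum in $L^2(\Sigma_T)$. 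The contraction estimate rests on the Lipschitz dependence $\bar v\mapsto\bar h$ (Volterra), the Lipschitz continuity of $\mW$ in the sup-norm from (C3) composed with the bounded functional $\mM$, and the smoothing of the thermostat kernel: since $\frac{d}{dt}(E_1\ast r)=\epsilon^{-1}(r - E_1\ast r)$, the differentiated feedback term $\frac{d}{dt}\mF(\mW(\mM(\bar u)))$ lands in $L^2(\Sigma_T)$ and depends Lipschitz-continuously on $\bar u$ there. A short-time contraction, iterated over a partition of $[0,T]$ with uniform constants, yields a unique global weak solution, whence the uniqueness in Theorem \ref{thA2.1}.

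The main obstacle is exactly the boundary memory term. In the strong space $H^{1,2}(Q_T)$ the traces of first-order space derivatives live in $H^{1/4,1/2}(\Sigma_T)$ by Theorem \ref{th1.1}(II), but $\mW$ is only Lipschitz in the sup-norm and cannot be controlled in fractional Sobolev norms, so $\psi$ fails to be Lipschitz into $H^{1/4,1/2}(\Sigma_T)$; this is what forces the detour through $H^{3/4,3/2}(Q_T)$, where $L^2(\Sigma_T)$ boundary regularity suffices. The final step is the regularity bootstrap. Once $v\in H^{3/4,3/2}(Q_T)$ and $h\in L^2(0,T)$ are fixed, I re-read the data for this particular $v$: the trace $\bar v|_{\Sigma_T}$, the term $\bar h\ast B\bar v$, and (via the $C^1$-in-time gain from $E_1$ together with $u_A\in H^{5/4}(0,T;L^2(\Gamma))\cap H^1(0,T;H^{1/2}(\Gamma))$ from (H8)) the differentiated feedback term all now belong to the trace space compatible with a solution in $H^{1,2}(Q_T)$. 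Invoking the strong parabolic regularity of Lions--Magenes together with the compatibility relations (H9), I would conclude $v\in H^{1,2}(Q_T)$, that is $u\in H^2(0,T;L^2(\Omega))\cap H^1(0,T;H^2(\Omega))$, completing the proof.
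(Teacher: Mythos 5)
Your overall architecture coincides with the paper's: differentiate in time to pass to the equivalent Problem \ref{P3} with a second-kind equation for $h$, solve by contraction in the weak space $H^{3/4,3/2}(Q_T)$, where only $L^2(\Sigma_T)$ boundary regularity is needed (precisely to accommodate the sup-norm-Lipschitz memory operator), and bootstrap to $H^{1,2}(Q_T)$. However, there is a genuine gap in your globalization step. The fixed-point map contains the bilinear terms $H\ast AV$, $H\ast BV$ and $H\ast(\psi_1,V)$, so the contraction constant on a ball of radius $r$ has the form $C(r)\tau^{\epsilon}$ and the local existence time $\tau(r)$ shrinks as $r$ grows; ``iterating over a partition of $[0,T]$ with uniform constants'' is exactly what a quadratic nonlinearity does \emph{not} permit, and without an a priori bound on the size of $(v,h)$ the naive continuation could stall after finitely many steps. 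Moreover, the problem is nonlocal in time, so the restarted problem on $(\tau,\tau+\delta)$ is not literally of the same form as the original one. The paper's way out is structural: by Lemma \ref{le1.8} (III), when one restricts $h\ast z$ to $(\tau,\tau+\delta)$ with $\delta\le\tau$, the term quadratic in the \emph{new} unknowns $(h_{|(\tau,\tau+\delta)},z_{|(\tau,\tau+\delta)})$ is absent, so the continued system (\ref{eq3.25}) is \emph{affine} in the new unknowns (Remark \ref{re4.4}); Lemmas \ref{le3.2} and \ref{le3.4} then solve this affine system on the whole remaining interval with a step size independent of the data, and doubling the time interval finitely many times gives Theorem \ref{th6.6}. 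You need either this observation or some substitute a priori estimate; as written, your global existence claim is unsupported.

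A second, smaller point: in the regularity bootstrap you assert that the differentiated feedback term lands in the trace space for $H^{1,2}(Q_T)$, but the needed membership $\Psi(v)\in H^{1/4,1/2}(\Sigma_T)$ does not follow from the $C^1$ gain of $E_1$ alone. Since $D_t(E_1\ast r_v)=\epsilon^{-1}(r_v-E_1\ast r_v)$ is only continuous and of bounded variation, one must invoke the embedding $BV([0,T])\hookrightarrow H^{\beta}(0,T)$ for $\beta<1/2$ (Lemma \ref{le2.2}) together with a product lemma for $H^{\beta}\times C^{\alpha}$ (Lemma \ref{le2.3}) to reach $H^{1/4}(0,T;L^2(\Gamma))$, which is exactly why (C1) requires $\mW$ to take values in $BV$. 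Your sketch omits this step, and without it the bootstrap to $v\in H^{1,2}(Q_T)$ does not close.
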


\begin{remark}
{\rm Let $u \in H^2(0, T; L^2(\Omega)) \cap H^1(0, T; H^2(\Omega))$. As $u_{|\Sigma_T} \in H^1(0, T; H^{3/2}(\Gamma))$,
then $\mM (u) \in H^1(0, T) \hookrightarrow C([0, T]) \cap BV([0, T])$. So, $\mW(\mM (u))$ is well defined and belongs to $C([0, T]) \cap BV([0, T])$.
Moreover, we deduce that $E_1 * \mW(\mM (u)) \in C^1([0, T])$ and $D_t[E_1 * \mW(\mM (u))] \in BV([0, T])$. }
\end{remark}

The first step in the proof of Theorem \ref{thA2.1} is to formulate a problem which is equivalent to Problem 2.
This is done through the following

\begin{proposition}\label{prA2.1} Assume (C1)-(C3) and (H1)-(H9). Let $(u,h)$ be a solution to (\ref{eqA1.15}) such that
\begin{equation}\label{eqA2.4}
u \in H^2(0, T; L^2(\Omega)) \cap H^1(0, T; H^2(\Omega)), \quad h \in L^2(0, T).
\end{equation}
Setting
\begin{equation}
v(t,x):= D_t u(t,x),
\end{equation}
then the pair $(v,h)$ satisfies
\begin{equation}\label{eqA2.6A}
v \in H^{1,2}(Q_T), \quad h \in L^2(0, T)
\end{equation}
and solves
\begin{problem}\label{P3}
Find $v$ of domain $Q_T$ and $h$ of domain $(0, T)$, such that
\begin{equation}\label{eqA2.6}
\left\{\begin{array}{l}
D_tv(t,x)   = Av(t,x) + h \ast Av(t,x) + v^*(t,x) \\ \\
 - [(\psi_1,v(t,\cdot)) + h \ast (\psi_1, v(t,\cdot))]Êz_0(x), \quad  (t,x) \in Q_T,  \\ \\
v(0,x) = v_0(x),  \quad x \in \Omega, \\ \\
Bv(t,y)  = - v(t,y) + \Psi(v)(t,y) - h \ast Bv(t,y) +    [(\psi_1,v(t,\cdot)) + h \ast (\psi_1, v(t,\cdot))] z_1(y)Ê\\ \\
+ v^*_\Gamma (t,y), \quad (t,y) \in \Sigma_T, \\ \\
h(t) = h^*(t) - (\psi_1,v(t,\cdot)) - h \ast (\psi_1, v(t,\cdot)), \quad t \in (0, T),
\end{array}
\right.
\end{equation}
\end{problem}
where we have set
\begin{equation}
\chi:= \Phi(Au_0)^{-1},
\end{equation}
\begin{equation}
h^*(t) := \chi(g''(t) - \Phi (D_tf(t,\cdot)), \quad t \in (0, T),
\end{equation}
\begin{equation}
z_0(x):= Au_0(x), \quad x \in \Omega,
\end{equation}
\begin{equation}
v^*(t,x) := D_tf(t,x) + h^*(t) z_0(x), \quad (t,x) \in Q_T,
\end{equation}
\begin{equation}\label{eqA2.3}
A^*:= \sum_{i,j=1}^n D_{x_j} (\overline{a_{ij}(x)}  D_{x_i}),
\end{equation}
\begin{equation}
\psi_1(x):= \chi \overline{ A^*\overline{\omega}(x)}, \quad x \in \Omega,
\end{equation}
\begin{equation}\label{eqA2.12}
\Psi(v)(t,y):= D_t [\mF (\mW(\mM(u_0 + 1 \ast v))](t,y), \quad (t,y) \in \Sigma_T,
\end{equation}
\begin{equation}
z_1(y):= Bu_0(y), \quad y \in \Gamma,
\end{equation}
\begin{equation}
v^*_\Gamma (t,y) := -D_t q(t,y) - h^*(t) z_1(y), \quad (t,y) \in \Sigma_T,
\end{equation}
\begin{equation}
(\psi_1, v):= \int_\Omega \psi_1(x) v(x) dx, \quad v \in L^2(\Omega).
\end{equation}
On the other hand, if $(v, h)$ is a solution to problem (\ref{eqA2.6}) and satisfies conditions (\ref{eqA2.6A}),
then the pair $(u, h)$ verifies (\ref{eqA2.4}) and solves system (\ref{eqA1.15}), where we have set
$u(t,x) := u_0(x) + 1 \ast v(t,x)$.

\end{proposition}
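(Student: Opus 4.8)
The plan is to prove the equivalence in two directions, deriving Problem~\ref{P3} from Problem~\ref{P2} and then reversing the construction. The central algebraic manoeuvre in both directions is the same: differentiate the parabolic equation and the boundary condition in time, and use the overdetermination condition $\Phi(u)=g$ to isolate an explicit expression for $h$ as an integral equation of the second kind. I first establish the regularity claim: if $u \in H^2(0,T;L^2(\Omega)) \cap H^1(0,T;H^2(\Omega))$, then setting $v:=D_t u$ yields $v \in H^{1,2}(Q_T)$ immediately, since $D_t v = D_t^2 u \in L^2(0,T;L^2(\Omega))$ and $v \in H^1(0,T;L^2(\Omega))\cap L^2(0,T;H^2(\Omega))$ is read off directly from the regularity of $u$.

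For the forward direction I proceed as follows. First I differentiate the evolution equation $D_t u = Au + h\ast Au + f$ in $t$; using the identity $D_t(h\ast w) = h(t)w(0,\cdot) + h\ast D_t w$ for the convolution, together with $Au(0,\cdot)=Au_0=z_0$, I obtain an equation for $v=D_t u$ containing the term $h(t)z_0(x)$ and $h\ast Av$. Second, I apply $\Phi$ to this equation, and here the hypotheses (H6)--(H7) are essential: since $\omega \in H_0^2(\Omega)$ vanishes with its first derivatives on $\Gamma$, the integration by parts identity $\Phi(Av) = \int_\Omega \omega\,Av\,dx = \int_\Omega \overline{A^*\overline\omega}\,v\,dx$ holds with no boundary contribution, which is precisely why $(\psi_1,v)$ with $\psi_1 = \chi\,\overline{A^*\overline\omega}$ appears. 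Using $\Phi(u)=g$, hence $\Phi(v)=g'$ and $\Phi(D_t v)=g''$, I solve for $h(t)$ algebraically to reach the fourth line of (\ref{eqA2.6}); the normalization $\chi=\Phi(Au_0)^{-1}$ makes this inversion legitimate. Substituting this expression for $h(t)$ back into the differentiated PDE produces the first two lines of (\ref{eqA2.6}). Third, I differentiate the boundary condition $Bu + h\ast Bu + q = \mF(\mW(\mM(u)))-u$ in $t$, again applying the convolution-derivative identity with $Bu(0,\cdot)=Bu_0=z_1$ and recognizing $D_t[\mF(\mW(\mM(u)))]=\Psi(v)$ via $u=u_0+1\ast v$; after substituting the formula for $h(t)$ this yields the third line of (\ref{eqA2.6}). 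The initial condition $v(0,\cdot)=v_0=Au_0+f(0,\cdot)$ comes from evaluating the original PDE at $t=0$ using $h\ast Au|_{t=0}=0$.

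For the converse direction I set $u:=u_0 + 1\ast v$ and run the computation in reverse. Here the role of the compatibility conditions (H9) is decisive: they guarantee that the integrated equations genuinely recover the original (undifferentiated) system rather than merely their time derivatives. Concretely, I integrate the first line of (\ref{eqA2.6}) in time from $0$ to $t$, and I must check that the constants of integration match—this is where $\Phi(u_0)=g(0)$, $\Phi(v_0)=g'(0)$, and the boundary compatibility $Bu_0+q(0,\cdot)=\phi_0 u_A(0,\cdot)+u_B(0,\cdot)-u_{0|\Gamma}$ enter, ensuring that the value at $t=0$ of each integrated relation agrees with the prescribed data. The overdetermination $\Phi(u)=g$ is then recovered by integrating the fourth line twice and matching initial values through (H9). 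I expect the main obstacle to be the careful bookkeeping in this converse direction: showing that the algebraically-manipulated system (\ref{eqA2.6}), whose fourth equation has absorbed the $\Phi$-projection of the PDE, can be disentangled to yield back all four conditions of (\ref{eqA1.15}) simultaneously and consistently. The forward substitution loses information unless one verifies that the consistency relation (H9) plus the structure of $\psi_1$ and $\chi$ allow one to re-derive $\Phi(u)=g$ as an independent consequence, rather than an assumption. The regularity statement (\ref{eqA2.4}) in the converse—upgrading $u=u_0+1\ast v$ from $v\in H^{1,2}(Q_T)$ back to $H^2(0,T;L^2(\Omega))\cap H^1(0,T;H^2(\Omega))$—follows routinely since $1\ast$ gains one time derivative and preserves spatial regularity.
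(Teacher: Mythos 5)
Your proposal is correct and follows essentially the same route as the paper: differentiate the equation and boundary condition in time using the identity $D_t(h\ast Au)=h\,Au_0+h\ast Av$, apply $\Phi$ to isolate $h$ as a second-kind integral equation (the integration by parts justifying $\Phi(Av)=\chi^{-1}(\psi_1,v)$ is left implicit in the paper but is exactly the content of the definition of $\psi_1$), substitute back, and in the converse direction integrate and invoke (H9) to recover the undifferentiated system together with $\Phi(u)=g$ via $D_t^2[\Phi(u)]=g''$. No gaps.
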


\begin{proof}
We observe that, thanks to (H1)-(H9), we have
\begin{equation}
\begin{array}{cccccc}
h^* \in L^2(0, T), & z_0 \in L^2(\Omega), & v^* \in L^2(Q_T),
&  \psi_1 \in L^2(\Omega), & z_1 \in H^{1/2}(\Gamma).
\end{array}
\end{equation}
Suppose that (\ref{eqA1.15}) has a solution $(u,h)$, with $u \in H^2(0, T; L^2(\Omega)) \cap H^1(0, T; H^2(\Omega))$, $h \in L^2(0, T)$.
Obviously, $v = D_t u \in H^{1,2}(Q_T)$. We observe that
$$
h \ast Au = h \ast (A u_0 + 1 \ast Av) = 1 \ast [h Au_0 + h \ast Av],
$$
so that $h \ast Au \in H^1(0, T; L^2(\Omega))$ and $D_t(h \ast Au) =  h Au_0 + h \ast Av$.
Analogously, $D_t(h \ast Bu) = h Bu_0 + h \ast Bv$. Hence, differentiating with respect to $t$ the two first equations in
(\ref{eqA1.15}), we obtain
\begin{equation}\label{eqA2.15}
\left\{\begin{array}{l}
D_t v(t,x) = Av(t,x) + h \ast Av(t,x) + h(t)z_0(x)  + D_tf(t,x), \quad (t,x)  \in  Q_T, \\ \\
Bv(t,y)  = -v(t,y) + D_t[\mF (\mW (\mM(u_0 + 1 \ast v)))](t,y) - h \ast Bv(t,y) \\ \\
 - h(t) z_1(y) - D_tq(t,y) , \quad (t,y)  \in  \Sigma_T, \\ \\
v(0,x) = v_0(x), \quad x \in \Omega.
\end{array}
\right.
\end{equation}
We observe also that
\begin{equation}
\begin{array}{lll}
\Phi(v(t,\cdot)) = g'(t), & \Phi(D_tv(t,\cdot)) = g''(t), & t \in (0, T).
\end{array}
\end{equation}
So, applying $\Phi$ to the first equation in (\ref{eqA2.15}), we obtain
\begin{equation}\label{eqA2.17}
h(t) = \chi[g''(t) - \Phi(Av(t,\cdot)) - h \ast \Phi(Av(t,\cdot)) - \Phi(D_t f(t, \cdot))]Ê= h^*(t) - (\psi_1,v(t, \cdot)) - h \ast (\psi_1,v(t, \cdot)),
\end{equation}
which is the last equation in (\ref{eqA2.6}). Replacing $h$ with the right term of (\ref{eqA2.17}) in (\ref{eqA2.15}), we deduce that $(v,h)$ solves
(\ref{eqA2.6}).

On the other hand, assume that $(v,h)$ satisfies (\ref{eqA2.6A}) and solves (\ref{eqA2.6}). We set $u:= u_0 + 1 \ast v$.
Then, as $u_0 \in H^2(\Omega)$, we have $u \in H^2(0, T; L^2(\Omega)) \cap H^1(0, T; H^2(\Omega))$. Moreover, (\ref{eqA2.6})
clearly implies (\ref{eqA2.15}), which can be written in the form
$$
\left\{ \begin{array}{ll}
D_t^2 u = D_t(Au + h \ast Au  + f), & \mbox{ in } Q_T, \\ \\
D_t[Bu  + h \ast Bu  + q]  = D_t[\mF (\mW (\mM u)) - u], &  \mbox{ on }  \Sigma_T.
\end{array}
\right.
$$
From (H4) and (H8),  we deduce that the first three equations in (\ref{eqA1.15}) are satisfied.
It remains only to show that $\Phi(u) \equiv g$.
Applying $\Phi$ to the first equation in (\ref{eqA2.15}) and employing (\ref{eqA2.17}), we obtain
$$
D_t^2 [\Phi(u)](t) = \Phi(D_t^2 u(t,\cdot)) = \Phi(Av(t,\cdot) + h \ast Av(t,\cdot)) + h(t)\chi^{-1}  + \Phi[D_tf(t,\cdot)] = g''(t).
$$
So the conclusion follows from (H9).
\end{proof}

\section{Weak solutions to parabolic systems}\label{seA3}

\setcounter{equation}{0}

In this section we introduce the theory of linear parabolic problems together with some further results
and remarks that we are going to use to study our inverse problem. We begin by considering a linear parabolic
system in the form

\begin{equation}\label{eq1.22}
\left\{\begin{array}{ll}
D_tu(t,x) = Au(t,x) + f(t,x), & (t,x) \in Q_T, \\ \\
u(0,x) = u_0(x), & x \in \Omega, \\ \\
Bu(t,x') = g(t,x'), & (t,x') \in \Sigma_T,
\end{array}
\right.
\end{equation}
We outline now some topics of the theory contained in Chap. 4 of \cite{LiMa2}.
Recalling the definition of the adjoint operator $A^*$ of $A$ (see (\ref{eqA2.3})),
the first result is concerned with the elliptic theory.

\begin{theorem}\label{th1.2}
Assume that (H1)-(H2) hold. Then we can construct three operators $S$, $C$, $T$, such that:

(I) $S$ and $T$ are multiplication operators by functions $s(x')$ and $t(x')$ in $C^\infty (\Gamma)$,
such that $s(x') \neq 0$ and $t(x') \neq 0$, $\forall x' \in \Gamma$;

(II) $C$ is a first order differential operator, with coefficients in $C^\infty (\Gamma)$;

(III) (H2) holds with $A$ replaced by $A^*$ and $B$ replaced by $C$;

(IV) $\forall \, u, v \in H^2(\Omega)$, the following Green's formula holds
\begin{equation}\label{eq1.24}
\int_\Omega [Au(x) \overline{v(x)} - u(x) \overline{A^*v(x)}] dx = \int_{\Gamma}Ê[Su(x') \overline{Cv(x')} - Bu(x') \overline{Tv(x')}] d\sigma.
\end{equation}

\end{theorem}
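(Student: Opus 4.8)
The plan is to derive Green's formula (IV) explicitly and then simply read off the three operators from the boundary term, verifying the structural properties afterwards. First I would establish the basic Green identity by integrating by parts twice. Starting from $Au = \sum_{i,j} D_{x_i}(a_{ij} D_{x_j} u)$, one integration by parts in $x_i$ moves a derivative onto $\overline{v}$ and produces a conormal boundary term; a second integration by parts in $x_j$ moves the remaining derivative onto $v$ and, using the definition (\ref{eqA2.3}) of $A^*$, reconstitutes $\int_\Omega u\,\overline{A^* v}\,dx$. For $u,v \in H^2(\Omega)$ this yields
\[
\int_\Omega (Au\,\overline{v} - u\,\overline{A^* v})\,dx = \int_\Gamma \big( \partial_{\nu_A} u \cdot \overline{v} - u \cdot \overline{\partial_{\nu_{A^*}} v}\big)\,d\sigma,
\]
where $\partial_{\nu_A} u := \sum_{i,j} \nu_i a_{ij} D_{x_j} u$ and $\partial_{\nu_{A^*}} v := \sum_{i,j} \nu_j \overline{a_{ij}} D_{x_i} v$ are the conormal derivatives of $A$ and $A^*$. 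The whole task then reduces to rewriting this boundary bilinear form in the shape $Su\,\overline{Cv} - Bu\,\overline{Tv}$.

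The algebraic heart of the construction is the next step. Decomposing each $D_{x_j}u$ on $\Gamma$ into its normal component $\nu_j \partial_\nu u$ and a tangential part, I note that the coefficient of $\partial_\nu u$ in $\partial_{\nu_A} u$ is $\sum_{i,j}\nu_i a_{ij}\nu_j$, nonzero by proper ellipticity, while the coefficient of $\partial_\nu u$ in $Bu$ is $\sum_i b_i \nu_i$, nonzero by (H2). Hence I may eliminate $\partial_\nu u$ and write
\[
\partial_{\nu_A} u = \lambda(x')\, Bu + R u, \qquad \lambda := \frac{\sum_{i,j}\nu_i a_{ij}\nu_j}{\sum_i b_i \nu_i} \in C^\infty(\Gamma),\ \lambda \neq 0,
\]
where $R$ is a first-order \emph{tangential} operator (plus a zeroth-order term) acting on $u_{|\Gamma}$. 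Substituting this into the boundary form and defining $T$ to be multiplication by $t(x') := -\overline{\lambda}$ absorbs the $Bu$ contribution exactly into the term $-\int_\Gamma Bu\,\overline{Tv}\,d\sigma$; since $\lambda$ is smooth and nowhere zero, $T$ satisfies (I).

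The remaining boundary terms are $\int_\Gamma (Ru\,\overline{v} - u\,\overline{\partial_{\nu_{A^*}} v})\,d\sigma$. Because $\Gamma=\partial\Omega$ is a compact manifold without boundary, integration by parts along $\Gamma$ produces no extra contributions, so I may transfer $R$ onto $v$ via its formal $L^2(\Gamma)$-adjoint $R^\sharp$, obtaining $\int_\Gamma u\,\overline{(R^\sharp v - \partial_{\nu_{A^*}} v)}\,d\sigma$. Taking $S$ to be the identity (i.e.\ $s(x')\equiv 1$) and $C := R^\sharp - \partial_{\nu_{A^*}}$ gives formula (IV) together with (I) and (II) at once: $C$ is first order with $C^\infty(\Gamma)$ coefficients, and its normal-derivative coefficient equals $-\overline{\sum_{i,j}\nu_i a_{ij}\nu_j}\neq 0$, so $C$ is non-tangential.

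Finally, property (III) — that $(A^*, C)$ again satisfies (H2) — is the real obstacle. Proper ellipticity of $e^{i\theta}D_t^2 + A^*$ is immediate, since $A^*$ carries the complex-conjugate principal coefficients and the same symbol up to conjugation, and non-tangentiality of $C$ has just been checked. What requires genuine work is the covering (Lopatinski–Shapiro) condition for $C$ relative to $A^*$ for every $\theta \in [-\pi/2,\pi/2]$. Here I would invoke the duality of the complementing condition: the operator produced by the Green's formula is precisely the \emph{adjoint boundary operator}, and the covering condition for $(A,B)$ is algebraically equivalent, through the nondegenerate boundary pairing in the identity above, to the covering condition for the adjoint pair $(A^*,C)$. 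This equivalence is exactly the content of the adjoint/regularity theory for elliptic boundary value problems in \cite{LiMa1}, Chap.\ 2, which I would cite to close the argument rather than re-running the symbol computation. Verifying the covering condition for the adjoint pair is the step I expect to be the most delicate.
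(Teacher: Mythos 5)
Your construction is correct and is essentially the standard formal-adjoint/Green's-formula construction of Lions--Magenes, Chap.~2 --- which is precisely, and solely, what the paper cites as its proof of this theorem (the explicit elimination of $\partial_\nu u$ via the non-tangentiality of $B$, the choice of $T$ as multiplication by a nonvanishing smooth function, and $C$ as a non-tangential first-order operator obtained from the tangential adjoint and the conormal derivative of $A^*$ all match the construction in that reference). The one step you flag as delicate, the covering condition for the adjoint pair $(A^*,C)$ via duality of the complementing condition, you defer to the same chapter of \cite{LiMa1}, so nothing is missing relative to the paper's own treatment.
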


\begin{proof} See \cite{LiMa1}, Chap. 2.
\end{proof}

From (\ref{eq1.24})  we immediately deduce the following formula, valid for $u, v \in H^{1,2}(Q_T)$,
\begin{equation}
\begin{array}{c}
\int_{Q_T} \{[D_tu(t,x) - Au(t,x)] \overline{v(t,x)} + u(t,x) [\overline{D_tv(t,x) + A^*v(t,x)}]\} dt dx = \\ \\
= \int_\Omega [u(T,x) \overline{v(T,x)} -  u(0,x) \overline{v(0,x)}] dx +
\int_{\Sigma_T}Ê[ Bu(t,x') \overline{Tv(t,x')} - Su(t,x') \overline{Cv(t,x')}] dt d\sigma.
\end{array}
\end{equation}

The next fundamental result holds.

\begin{theorem}\label{th1.3}
Assume (H1)-(H2). Then (\ref{eq1.22}) admits a unique solution $u \in H^{1,2}(Q_T)$
if and only if
$$
\begin{array}{ccc}
f \in L^2(Q_T), & u_0 \in H^1(\Omega), & g \in H^{1/4,1/2}(\Sigma_T).
\end{array}
$$
\end{theorem}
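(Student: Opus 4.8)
The plan is to read the statement as an isomorphism (well-posedness) property for the map $u \mapsto (D_t u - Au,\ u(0,\cdot),\ Bu|_{\Sigma_T})$ on $H^{1,2}(Q_T)$, and to treat the two implications separately. The necessity (``only if'') is the routine direction. If $u \in H^{1,2}(Q_T) = H^1(0,T;L^2(\Omega)) \cap L^2(0,T;H^2(\Omega))$, then $D_t u \in L^2(Q_T)$ and, since $A$ is second order with coefficients in $C^\infty(\overline{\Omega})$, also $Au \in L^2(0,T;L^2(\Omega)) = L^2(Q_T)$, so $f = D_t u - Au \in L^2(Q_T)$. For the remaining two memberships I would apply Theorem \ref{th1.1} directly: part (III) with $k=0$, $\alpha=1$, $\beta=2$ gives $u(0,\cdot) \in H^{(1-1/2)\cdot 2}(\Omega) = H^1(\Omega)$, while part (II) with $j=1$, $\alpha=1$, $\beta=2$ gives that every first-order space-derivative trace lies in $H^{1/4,1/2}(\Sigma_T)$; since $B$ is first order with smooth coefficients, $g = Bu|_{\Sigma_T} \in H^{1/4,1/2}(\Sigma_T)$.

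For the sufficiency (``if'') I would first reduce to homogeneous initial and boundary data. Using that the trace operators of Theorem \ref{th1.1} admit continuous right inverses, I would pick $w_1 \in H^{1,2}(Q_T)$ with $w_1(0,\cdot) = u_0$, set $\tilde g := g - Bw_1|_{\Sigma_T}$, which lies in $H^{1/4,1/2}(\Sigma_T)$ by the necessity argument, and then lift $\tilde g$ to some $w_2 \in H^{1,2}(Q_T)$ with $Bw_2|_{\Sigma_T} = \tilde g$ and $w_2(0,\cdot) = 0$. The point that makes this lifting unobstructed is that the exponents $(1/4,1/2)$ are borderline: the time-trace at $t=0$ of a function in $H^{1/4}(0,T;\cdot)$ is undefined (as $1/4 < 1/2$), so no corner compatibility condition between $u_0$ and $g$ can, or need, be imposed. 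Writing $u = w_1 + w_2 + \tilde u$ reduces matters to solving $D_t \tilde u = A\tilde u + \tilde f$, $\tilde u(0,\cdot)=0$, $B\tilde u|_{\Sigma_T}=0$ with $\tilde f \in L^2(Q_T)$.

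I would then solve the reduced problem by semigroup methods. Let $A_B$ be the realization of $A$ in $L^2(\Omega)$ with domain $\{u \in H^2(\Omega): Bu|_\Gamma = 0\}$. The parameter-ellipticity encoded in (H2) (proper ellipticity of $e^{i\theta}D_t^2 + A$ and its covering by $B$ for all $\theta \in [-\pi/2,\pi/2]$) furnishes the resolvent estimates that make $A_B$ the generator of an analytic semigroup on $L^2(\Omega)$, with domain carrying the full $H^2$ norm by elliptic regularity. Since $L^2(\Omega)$ is a Hilbert space, maximal $L^2$-regularity is automatic, so the solution $\tilde u(t) = \int_0^t e^{(t-s)A_B}\tilde f(s)\,ds$ lies in $H^1(0,T;L^2(\Omega)) \cap L^2(0,T;D(A_B)) \hookrightarrow H^{1,2}(Q_T)$; this gives existence. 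For uniqueness, a solution of the fully homogeneous problem satisfies $\tilde u(t) \in D(A_B)$ for a.e.\ $t$ and $D_t \tilde u = A_B \tilde u$, $\tilde u(0)=0$ in $L^2(\Omega)$, whence $\tilde u \equiv 0$ by uniqueness for the abstract Cauchy problem associated with $A_B$ (alternatively, by a Green's-formula energy estimate based on Theorem \ref{th1.2}).

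The main obstacle is the sufficiency direction, where two steps carry the real weight. The deeper one is proving that $A_B$ generates an analytic semigroup with the sharp (maximal) $H^{1,2}$ regularity: this is exactly where the full strength of the parameter-ellipticity in (H2) enters, through $L^2$ resolvent estimates of Agmon--Douglis--Nirenberg / Lopatinskii--Shapiro type. The second delicate step is the boundary lifting with prescribed vanishing initial trace, together with the verification that the anisotropic exponents $(1/4,1/2)$ sit precisely at the threshold below which no compatibility condition is needed; matching the trace spaces with their right inverses is the source of all the bookkeeping. This is, of course, the content of \cite{LiMa2}, Chap.\ 4.
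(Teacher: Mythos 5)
Your argument is sound, but it is worth noting that the paper does not prove this statement at all: its ``proof'' is the single line ``See \cite{LiMa2}, Chap.\ 4, Theorem 5.3'', i.e.\ the result is imported wholesale from Lions--Magenes, whose own derivation goes through interpolation and transposition starting from the Dirichlet problem. What you propose is a genuinely different, self-contained route: necessity by the anisotropic trace theorems (your exponent bookkeeping in Theorem \ref{th1.1} (II)--(III) is correct, including the observation that the zeroth-order part of $B$ lands in the smaller space $H^{3/4,3/2}(\Sigma_T)\hookrightarrow H^{1/4,1/2}(\Sigma_T)$), and sufficiency by lifting the data and invoking analytic-semigroup generation plus de Simon's automatic maximal $L^2$-regularity on the Hilbert space $L^2(\Omega)$. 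Your remark that $1/4<1/2$ is exactly why no corner compatibility between $u_0$ and $g$ is needed is the right diagnosis and is the heart of why the theorem holds in this clean ``if and only if'' form. Two points you should make explicit if you flesh this out: (i) the simultaneous lifting of $(u_0,\tilde g)$ with $B$ an oblique first-order operator requires the nondegeneracy $\sum_i b_i\nu_i\neq 0$ from (H2) to split $B$ into a normal part and tangential/zeroth-order parts, so that after arranging a vanishing Dirichlet trace one only has to lift a prescribed normal derivative in $H^{1/4,1/2}(\Sigma_T)$ --- this surjectivity-of-the-trace-map step is itself a Lions--Magenes theorem, so your proof is not entirely independent of \cite{LiMa2}; (ii) the passage from the covering condition on $e^{i\theta}D_t^2+A$ for $\theta\in[-\pi/2,\pi/2]$ to sectorial resolvent estimates for $A_B$ (Agmon's condition) and to the identification $D(A_B)=\{u\in H^2(\Omega):Bu=0\}$ with equivalent norms is where all the ellipticity hypotheses are actually consumed. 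What your approach buys is transparency and a modern functional-analytic packaging; what the paper's citation buys is brevity and the fact that the same Lions--Magenes machinery is reused immediately afterwards for the weak (transposed) theory in Corollary \ref{co1.2} and Theorem \ref{th1.4}, which your semigroup argument would not give you for free.
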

\begin{proof} See \cite{LiMa2}, Chap. 4, Theorem 5.3.
\end{proof}
\begin{remark}
{\rm If  $u \in H^{1,2}(Q_T)$ is the solution of (\ref{eq1.22}), then it holds
\begin{equation}
\begin{array}{c}
\int_{Q_T}  u(t,x) [\overline{D_tv(t,x) + A^*v(t,x)}] dt dx = \\ \\
= - \int_{Q_T} f(t,x) \overline{v(t,x)} dt dx -  \int_\Omega  u_0(x) \overline{v(0,x)}] dx
+  \int_{\Sigma_T}Êg(t,x') \overline{Tv(t,x')} dt d\sigma, \\ \\
\forall \, v \in H^{1,2}(Q_T) : Cv \equiv 0, v(T,\cdot) = 0.
\end{array}
\end{equation}
}
\end{remark}

A simple consequence of Theorems \ref{th1.2} Êand  \ref{th1.3}  is the following

\begin{corollary}\label{co1.1}
Assume that (H1)-(H2) are fulfilled and consider the system
\begin{equation}\label{eq1.26}
\left\{\begin{array}{ll}
D_tv(t,x) + A^*v(t,x) = \phi(t,x), & (t,x) \in Q_T, \\ \\
v(T,x) = 0, & x \in \Omega, \\ \\
Cv(t,x') = 0, & (t,x') \in \Sigma_T.
\end{array}
\right.
\end{equation}
If $\phi \in L^2(Q_T)$, then (\ref{eq1.26}) has a unique solution $v \in H^{1,2}(Q_T)$.
 \end{corollary}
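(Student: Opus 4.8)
The plan is to reduce the backward-in-time problem (\ref{eq1.26}) to a standard forward parabolic problem of the type (\ref{eq1.22}) by reversing time, and then to invoke Theorem \ref{th1.3}. The decisive structural input is Theorem \ref{th1.2}(III): the adjoint pair $(A^*,C)$ again satisfies (H1)-(H2), so the existence and uniqueness statement of Theorem \ref{th1.3} is available verbatim with $A$ replaced by $A^*$ and $B$ replaced by $C$. All that is missing is to match the geometry of the problem, namely to turn the terminal condition $v(T,\cdot)=0$ into an initial condition.

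Concretely, I would set $w(t,x):=v(T-t,x)$. Then $D_tw(t,x)=-(D_tv)(T-t,x)$, and substituting $s=T-t$ into the first equation of (\ref{eq1.26}) gives $D_tw(t,x)=A^*w(t,x)-\phi(T-t,x)$. Since $C$ acts only in the space variables, it commutes with the time reflection, so the boundary condition becomes $Cw(t,x')=0$, while the terminal condition turns into the initial condition $w(0,\cdot)=v(T,\cdot)=0$. Thus $w$ must solve the forward system
\[
D_tw=A^*w+\til\phi \ \text{ in } Q_T, \quad w(0,\cdot)=0 \ \text{ in } \Omega, \quad Cw=0 \ \text{ on } \Sigma_T,
\]
where $\til\phi(t,x):=-\phi(T-t,x)$. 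The hypotheses of Theorem \ref{th1.3} (applied to the pair $(A^*,C)$) are all satisfied: $\til\phi\in L^2(Q_T)$ because $t\mapsto T-t$ preserves $L^2(Q_T)$, the initial datum $0$ lies in $H^1(\Omega)$, and the boundary datum $0$ lies in $H^{1/4,1/2}(\Sigma_T)$. Hence there is a unique $w\in H^{1,2}(Q_T)$.

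Finally, I would set $v(t,\cdot):=w(T-t,\cdot)$ and observe that the time reflection $t\mapsto T-t$ maps $H^{1,2}(Q_T)=H^1(0,T;L^2(\Omega))\cap L^2(0,T;H^2(\Omega))$ bijectively and isometrically onto itself; therefore $v\in H^{1,2}(Q_T)$ and, undoing the substitution, it solves (\ref{eq1.26}). Uniqueness transfers back the same way: any $H^{1,2}(Q_T)$ solution of (\ref{eq1.26}) yields, after time reversal, a solution of the forward system, which is unique by Theorem \ref{th1.3}, forcing $v$ to be unique as well. I do not expect a genuine obstacle here — the statement really is a corollary — and the only points requiring care are the correct bookkeeping of the sign in $D_tw=-(D_tv)(T-\cdot,\cdot)$ and the invariance of the $H^{1,2}(Q_T)$ norm under the time reflection.
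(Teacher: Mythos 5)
Your proposal is correct and is exactly the argument the paper intends: the corollary is stated as "a simple consequence of Theorems \ref{th1.2} and \ref{th1.3}", namely one uses Theorem \ref{th1.2}(III) to see that $(A^*,C)$ again satisfies (H1)--(H2) and then applies Theorem \ref{th1.3} after the time reversal $t\mapsto T-t$. Your bookkeeping of the sign and of the invariance of $H^{1,2}(Q_T)$ under the reflection is accurate, so nothing is missing.
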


From property  (II) of Theorem \ref{th1.1}, it follows that, if $v \in H^{1,2}(Q_T)$, then $Tv \in H^{3/4,3/2}(Q_T)$.
So, employing a simple duality argument, from Corollary \ref{co1.1} we deduce the following

 \begin{corollary}\label{co1.2}
 Suppose that (H1)-(H2) are fulfilled. Then, $\forall f \in H^{1,2}(Q_T)'$, $\forall \, u_0 \in H^1(\Omega)'$,
 $\forall \, g \in H^{3/4,3/2}(Q_T)'$, there exists a unique $u = S_T(f, u_0,g)$ in $L^2(Q_T)$ such that
 \begin{equation}
\begin{array}{c}
\int_{Q_T}  u(t,x) [\overline{D_tv(t,x) + A^*v(t,x)}] dt dx =
 - (f, v) -  (u_0, v(0,\cdot)) + (g,  Tv), \\ \\
 \forall \, v \in H^{1,2}(Q_T) : Cv \equiv 0, v(T,\cdot) = 0.
\end{array}
\end{equation}
Moreover, if $f \in L^2(Q_T)$, $u_0 \in H^1(\Omega)$ and $g \in H^{1/4,1/2}(\Sigma_T)$, then $u \in H^{1,2}(Q_T)$ and
it solves  (\ref{eq1.22}).
 \end{corollary}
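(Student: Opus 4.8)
The plan is to construct $u$ by duality, using Corollary \ref{co1.1} to parametrize the admissible test functions. Let
\[
X := \{v \in H^{1,2}(Q_T) : Cv \equiv 0, \ v(T,\cdot) = 0\},
\]
which is a closed subspace of $H^{1,2}(Q_T)$, since $v \mapsto Cv$ and $v \mapsto v(T,\cdot)$ are continuous. The operator $\Lambda v := D_t v + A^* v$ maps $X$ boundedly into $L^2(Q_T)$, and by Corollary \ref{co1.1} it is a bijection; hence, by the open mapping theorem, its inverse $R := \Lambda^{-1} : L^2(Q_T) \to X$ is a bounded linear isomorphism. Thus every admissible test function $v$ is $R\phi$ for a unique $\phi \in L^2(Q_T)$.

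Next I would verify that the right-hand side defines a bounded antilinear functional of $\phi$. Writing $v = R\phi$, the three terms are bounded antilinear functionals of $v$: the term $(f,v)$ because $f \in H^{1,2}(Q_T)'$; the term $(u_0, v(0,\cdot))$ because the time trace $v \mapsto v(0,\cdot)$ is continuous from $H^{1,2}(Q_T)$ into $H^1(\Omega)$ by Theorem \ref{th1.1}(III) (with $\alpha = 1$, $k = 0$) and $u_0 \in H^1(\Omega)'$; and the term $(g, Tv)$ because, as observed just before the statement, $v \mapsto Tv$ is continuous from $H^{1,2}(Q_T)$ into $H^{3/4,3/2}(\Sigma_T)$ (Theorem \ref{th1.1}(II) with $j = 0$, $\beta = 2$, followed by multiplication by the smooth factor $t(x')$), while $g$ lies in the corresponding dual. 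Composing with the bounded $R$, the map
\[
\phi \mapsto -(f, R\phi) - (u_0, (R\phi)(0,\cdot)) + (g, T R\phi)
\]
is a bounded antilinear functional on the Hilbert space $L^2(Q_T)$.

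By the antilinear version of the Riesz representation theorem there is a unique $u \in L^2(Q_T)$ with $\int_{Q_T} u \,\overline{\phi}\, dt\, dx$ equal to this functional for every $\phi \in L^2(Q_T)$. Since $\overline{\phi} = \overline{D_t v + A^* v}$ when $v = R\phi$, this is precisely the asserted weak identity, and I would set $S_T(f, u_0, g) := u$. Uniqueness is immediate: if two elements of $L^2(Q_T)$ satisfy the identity, their difference is $L^2$-orthogonal to $\Lambda v = \phi$ for every admissible $v$, and as $\phi$ exhausts $L^2(Q_T)$ (surjectivity of $\Lambda$ from Corollary \ref{co1.1}), the difference vanishes.

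For the final (``moreover'') assertion, when $f \in L^2(Q_T)$, $u_0 \in H^1(\Omega)$ and $g \in H^{1/4,1/2}(\Sigma_T)$, Theorem \ref{th1.3} provides a genuine solution $\widetilde u \in H^{1,2}(Q_T)$ of (\ref{eq1.22}). The Green's formula recorded in the Remark following Theorem \ref{th1.3} shows that $\widetilde u$ satisfies exactly the weak identity above, the duality pairings reducing to their integral representations under the canonical embeddings $L^2 \hookrightarrow H^{-\beta}$. By the uniqueness just established, $u = \widetilde u$, so $u \in H^{1,2}(Q_T)$ and solves (\ref{eq1.22}). The only delicate bookkeeping, and the step I would treat most carefully, is matching the trace exponents of Theorem \ref{th1.1} to the dual spaces of the data (in particular confirming $Tv \in H^{3/4,3/2}(\Sigma_T)$); beyond that, the argument is a routine duality/Riesz construction resting entirely on the isomorphism supplied by Corollary \ref{co1.1}.
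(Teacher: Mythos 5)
Your argument is correct and is exactly the ``simple duality argument'' the paper alludes to just before the statement and then delegates to Lions--Magenes (Chap.~4, Sec.~13.3): parametrize the test functions via the adjoint isomorphism of Corollary \ref{co1.1}, represent the resulting bounded antilinear functional on $L^2(Q_T)$ by Riesz, and identify the transposed solution with the strong one of Theorem \ref{th1.3} for regular data. Nothing is missing; you have simply written out in full the transposition proof that the paper cites.
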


 \begin{proof} See \cite{LiMa2}, Chap. 4, Sec. 13.3.
 \end{proof}
 \begin{remark}
 {\rm  An $L^p$ version of Corollary \ref{co1.2} ($1 < p < \infty$) is given in \cite{Am1}, Theorem 0.5. Here the author
 defines $S_T(f, u_0,g)$ as {\it ultraweak solution} of the parabolic problem with data $(f, g,u_0)$.}
 \end{remark}

 \medskip

Another key tool for our analysis is
 \begin{theorem}\label{th1.4}
  Assume (H1)-(H2). Then $S_T$ maps $H^{1/4,1/2}(Q_T)' \times H^{1/2}(\Omega) \times L^2(\Sigma_T)$ into $H^{3/4, 3/2}(Q_T)$.
 \end{theorem}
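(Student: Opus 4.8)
The plan is to deduce Theorem \ref{th1.4} by interpolating the two boundedness properties of $S_T$ already contained in Corollary \ref{co1.2}, exploiting the fact that each of the five spaces in the statement is the real interpolation space, at exponent $\theta = 1/4$ and second index $2$, of the corresponding ``regular'' and ``ultraweak'' endpoint spaces. First I would record the two endpoints. By the last assertion of Corollary \ref{co1.2}, $S_T$ is bounded from $L^2(Q_T) \times H^1(\Omega) \times H^{1/4,1/2}(\Sigma_T)$ into $H^{1,2}(Q_T)$; by its first assertion, $S_T$ is bounded from $H^{1,2}(Q_T)' \times H^1(\Omega)' \times H^{3/4,3/2}(\Sigma_T)'$ into $L^2(Q_T)$, the boundary datum being paired against $Tv \in H^{3/4,3/2}(\Sigma_T)$. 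Since $S_T$ is defined by one and the same variational identity, the two operators agree on the intersection of their domains and hence constitute a single bounded operator on the interpolation couple. By linearity it then suffices to handle the three data components separately, via $S_T(f,u_0,g) = S_T(f,0,0) + S_T(0,u_0,0) + S_T(0,0,g)$, each mapping into the common target $H^{3/4,3/2}(Q_T)$.

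Next I would apply the functor $(\cdot,\cdot)_{1/4,2}$ and identify the spaces. For the target, anisotropic interpolation of the parabolic spaces gives $(H^{1,2}(Q_T), L^2(Q_T))_{1/4,2} = H^{3/4,3/2}(Q_T)$, since interpolating $H^1(0,T;L^2(\Omega))$ with $L^2(0,T;L^2(\Omega))$ yields $H^{3/4}(0,T;L^2(\Omega))$ and interpolating $L^2(0,T;H^2(\Omega))$ with $L^2(0,T;L^2(\Omega))$ yields $L^2(0,T;H^{3/2}(\Omega))$. For the $f$ and $u_0$ components I would combine this with the duality theorem for real interpolation, $(A_0,A_1)'_{\theta,2} = (A_0',A_1')_{\theta,2}$, which is legitimate because each couple is reflexive with dense intersection. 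This gives $(L^2(Q_T), H^{1,2}(Q_T)')_{1/4,2} = H^{1/4,1/2}(Q_T)'$ for $f$, and $(H^1(\Omega), H^1(\Omega)')_{1/4,2} = H^{1/2}(\Omega)$ for $u_0$, the latter through the scale computation $(3/4)\cdot 1 + (1/4)\cdot(-1) = 1/2$. Consequently $S_T(\cdot,0,0)$ and $S_T(0,\cdot,0)$ carry $H^{1/4,1/2}(Q_T)'$ and $H^{1/2}(\Omega)$, respectively, into $H^{3/4,3/2}(Q_T)$.

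The remaining, genuinely delicate point is the boundary component: I must show
$(H^{1/4,1/2}(\Sigma_T), H^{3/4,3/2}(\Sigma_T)')_{1/4,2} = L^2(\Sigma_T)$. The difficulty is that the ultraweak endpoint $H^{3/4,3/2}(\Sigma_T)'$ is a \emph{sum} space, being the dual of an intersection, so one cannot simply interpolate the time and space factors independently as in the previous paragraph. The clean route is to invoke the Lions--Magenes anisotropic interpolation scale on $\Sigma_T$: the parabolically scaled spaces $H^{s/2,s}(\Sigma_T)$, extended to negative $s$ by the duality convention $H^{-s/2,-s}(\Sigma_T) := H^{s/2,s}(\Sigma_T)'$, form an interpolation scale with $(H^{s_0/2,s_0}(\Sigma_T), H^{s_1/2,s_1}(\Sigma_T))_{\theta,2} = H^{s_\theta/2,s_\theta}(\Sigma_T)$, $s_\theta = (1-\theta)s_0 + \theta s_1$. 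Taking $s_0 = 1/2$, $s_1 = -3/2$, $\theta = 1/4$ yields $s_\theta = 0$, that is $L^2(\Sigma_T)$.

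I expect this last identification — reconciling the intersection-type space $H^{1/4,1/2}(\Sigma_T)$ with the sum-type dual $H^{3/4,3/2}(\Sigma_T)'$ through reiteration and duality of the anisotropic boundary scale — to be the main obstacle. Once it is in place, combining the three interpolated partial maps with Corollary \ref{co1.2} gives at once that $S_T$ sends $H^{1/4,1/2}(Q_T)' \times H^{1/2}(\Omega) \times L^2(\Sigma_T)$ into $H^{3/4,3/2}(Q_T)$, which is the assertion; the remaining verifications are direct applications of the interpolation and duality theorems.
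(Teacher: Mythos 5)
The paper offers no argument of its own here --- Theorem \ref{th1.4} is quoted directly from \cite{LiMa2}, Chap.\ 4, Secs.\ 15.1 and 15.3 --- and those sections establish the result by essentially the strategy you describe: interpolating at parameter $\theta=1/4$ between the regular theory ($L^2(Q_T)\times H^1(\Omega)\times H^{1/4,1/2}(\Sigma_T)\to H^{1,2}(Q_T)$) and the transposed, ultraweak theory of Corollary \ref{co1.2}, with duality identifications of the intermediate data spaces. Your plan is therefore a faithful reconstruction of the cited proof; just note that several identities you invoke are themselves nontrivial theorems of that same chapter rather than formal consequences of componentwise interpolation --- in particular the equality $(L^2(Q_T),H^{1,2}(Q_T))_{1/4,2}=H^{1/4,1/2}(Q_T)$ is needed in the ``hard'' direction (embedding the intersection space into the interpolation space) before dualizing the source component, and the boundary identification crossing order zero must contend with $H^{3/4}(0,T)$ versus $H^{3/4}_{00}(0,T)$ subtleties in the time variable, which is precisely what occupies Lions--Magenes in Sec.\ 15.
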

\begin{proof} See \cite{LiMa2}, Chap. 4, Secs. 15.1 and 15.3.
 \end{proof}
Now we want to derive appropriate estimates of $S_T(f, u_0,g)$. To this aim, we consider in detail the space $H^\beta(0,T; V)$
with $\beta \in (0, 1)$ and $V$ Hilbert. It can be characterized as
$$
\left\{f \in L^2(0, T; V) \,\,\, \text{such that} \,\,\,  [f]_{H^\beta(0, T; V)}^2:=
\int_0^T \left(\int_0^t  \frac{\|f(t) - f(s)\|^2_V}{(t-s)^{1+2\beta}} ds \right) dt < \infty\right\}
$$
(see, for example, \cite{Ad1}, Theorem 7.48, or \cite{Am1}, Theorem 4.4.3) and it is a Hilbert space with the norm
\begin{equation}\label{norm1}
 \left (\|f\|'_{H^\beta(0, T; V)}\right )^2 := \|f\|_{L^2(0, T; V)}^2 + [f]^2_{H^\beta(0, T; V)}.
\end{equation}
Consider first the case $\beta \in (0, 1/2)$. One can prove that there exists $C>0$ such that, $\forall f \in H^\beta(0, T; V)$,
\begin{equation}\label{eq1.3}Ê
\int_0^T [t^{-2\beta} + (T-t)^{-2\beta}]Ê\|f(t)\|_V^2 dt \leq C \left( \|f\|'_{H^\beta(0, T; V)}\right )^2.
\end{equation}
(see \cite{LiMa1}, Chap. 1, Sec. 11.2). On the other hand, if $\beta \in (1/2, 1)$, then $H^\beta(0, T; V) \hookrightarrow
C^{\beta-1/2}([0, T]; V)$, the space of H\"older functions with values in $V$ (see  \cite{Si1}, Chap. 14).
So, introducing the norms
\begin{equation}\label{eq1.1}
\|f\|_{H^\beta(0, T; V)}^2Ê:=
\left\{ \begin{array}{lll}
 \int_0^T t^{-2\beta} \|f(t)\|_V^2 dt + \int_0^T (\int_0^t  \frac{\|f(t) - f(s)\|^2_V}{(t-s)^{1+2\beta}} ds ) dt, & \mbox{Êif }Ê 0 < \beta < 1/2, \\ \\
 \|f(0)\|_V^2 + \int_0^T (\int_0^t  \frac{\|f(t) - f(s)\|^2_V}{(t-s)^{1+2\beta}} ds ) dt, & \mbox{Ê if }Ê 1/2 < \beta < 1,
 \end{array}
\right.
\end{equation}
then it holds
\begin{proposition}
Let $\beta \in (0, 1) \setminus \{1/2\}$. Then the norms defined in (\ref{norm1}) and in (\ref{eq1.1}) are equivalent.
\end{proposition}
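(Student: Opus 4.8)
The plan is to exploit that both candidate norms, the one in (\ref{norm1}) and the one in (\ref{eq1.1}), contain \emph{the same} Gagliardo seminorm term $[f]^2_{H^\beta(0,T;V)}$. Consequently the asserted equivalence reduces to comparing, modulo this common seminorm, the two remaining lower-order terms: $\|f\|^2_{L^2(0,T;V)}$ on one side, and either $\int_0^T t^{-2\beta}\|f(t)\|_V^2\,dt$ (for $\beta<1/2$) or $\|f(0)\|_V^2$ (for $\beta>1/2$) on the other. I would treat the two ranges of $\beta$ separately.

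For $\beta\in(0,1/2)$ the equivalence follows almost immediately from the facts already recorded. Since $t\mapsto t^{-2\beta}$ is decreasing on $(0,T]$, it is bounded below by $T^{-2\beta}$, whence $\int_0^T\|f(t)\|_V^2\,dt\le T^{2\beta}\int_0^T t^{-2\beta}\|f(t)\|_V^2\,dt$; adding $[f]^2_{H^\beta(0,T;V)}$ shows that the norm (\ref{eq1.1}) dominates (\ref{norm1}). Conversely, discarding the nonnegative summand $(T-t)^{-2\beta}\|f(t)\|_V^2$ in (\ref{eq1.3}) gives $\int_0^T t^{-2\beta}\|f(t)\|_V^2\,dt\le C\,(\|f\|'_{H^\beta(0,T;V)})^2$, and adding $[f]^2_{H^\beta(0,T;V)}$ shows that (\ref{norm1}) dominates (\ref{eq1.1}). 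This settles the case $\beta<1/2$.

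For $\beta\in(1/2,1)$ the two norms differ by replacing $\|f\|^2_{L^2(0,T;V)}$ with $\|f(0)\|_V^2$. One inequality is straightforward: by the embedding $H^\beta(0,T;V)\hookrightarrow C^{\beta-1/2}([0,T];V)$ recalled above, $f(0)$ is well defined and $\|f(0)\|_V\le\|f\|_{C^0([0,T];V)}\le C\,\|f\|'_{H^\beta(0,T;V)}$, so (\ref{norm1}) dominates (\ref{eq1.1}). The reverse inequality amounts to a Poincar\'e-type estimate $\|f\|_{L^2(0,T;V)}\le C(\|f(0)\|_V+[f]_{H^\beta(0,T;V)})$, which I would establish in two steps. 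First, a fractional Poincar\'e inequality on an interval $I$ of length $\ell$, namely $\int_I\|f(\tau)-f_I\|_V^2\,d\tau\le \ell^{2\beta}[f]^2_{H^\beta(I;V)}$ with $f_I:=\ell^{-1}\int_I f$, which comes from Jensen's inequality together with the crude bound $|\tau-\eta|^{1+2\beta}\le\ell^{1+2\beta}$ inside the double integral. Applied on $I=(0,T)$ this yields $\|f-f_{(0,T)}\|_{L^2(0,T;V)}\le T^\beta[f]_{H^\beta(0,T;V)}$.

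Second, and this is the main obstacle, I must control the single trace value $f(0)$ by the seminorm, since the seminorm only controls a \emph{double} integral and never a pointwise value for free. The clean device is a dyadic telescoping: with $I_k:=(0,2^{-k}T)$ one has $f_{I_k}\to f(0)$ by continuity, and the Poincar\'e inequality on each $I_k$ gives $\|f_{I_{k+1}}-f_{I_k}\|_V\le C\,(2^{-k}T)^{\beta-1/2}[f]_{H^\beta(0,T;V)}$; summing the telescoping series yields $\|f(0)-f_{(0,T)}\|_V\le C_\beta\,T^{\beta-1/2}[f]_{H^\beta(0,T;V)}$, where convergence of the geometric series $\sum_k 2^{-k(\beta-1/2)}$ is exactly what forces the condition $\beta>1/2$. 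Combining the two steps with $\|f\|_{L^2}\le\|f-f_{(0,T)}\|_{L^2}+\sqrt{T}\,\|f_{(0,T)}\|_V$ and $\|f_{(0,T)}\|_V\le\|f(0)\|_V+\|f_{(0,T)}-f(0)\|_V$ delivers the Poincar\'e estimate, and hence the equivalence for $\beta>1/2$, completing the proof.
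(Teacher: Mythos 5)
Your proof is correct. For $\beta\in(0,1/2)$ you do exactly what the paper does: both directions are read off from (\ref{eq1.3}), so there is nothing to compare there. For $\beta\in(1/2,1)$, however, your route is genuinely different. The paper establishes the only nontrivial inequality, $\|f\|_{L^2(0,T;V)}^2\le C\|f\|_{H^\beta(0,T;V)}^2$, by contradiction: it takes a sequence with $\|f_k\|_{L^2}=1$ and vanishing new norm, extracts a subsequence for which the inner Gagliardo integral tends to $0$ at a.e.\ $t$, uses the H\"older embedding to find a common point $t$ where all $\|f_k(t)\|_V$ are small, and then bounds $1=\int_0^T\|f_k(s)\|_V^2\,ds$ by quantities tending to $0$. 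You instead prove the quantitative Poincar\'e-type bound $\|f\|_{L^2}\le C(T)\bigl(\|f(0)\|_V+[f]_{H^\beta}\bigr)$ directly, via the Jensen-type fractional Poincar\'e inequality on an interval plus a dyadic telescoping of the averages $f_{I_k}$ over $I_k=(0,2^{-k}T)$, with convergence of the geometric series $\sum_k 2^{-k(\beta-1/2)}$ encoding precisely the restriction $\beta>1/2$; identifying $\lim_k f_{I_k}$ with $f(0)$ uses the continuity of $f$, which is legitimate since the embedding $H^\beta(0,T;V)\hookrightarrow C^{\beta-1/2}([0,T];V)$ is already on record. What your argument buys is an explicit constant with transparent $T$-dependence (powers $T^\beta$, $T^{\beta-1/2}$, $T^{1/2}$) and no appeal to subsequence extraction, at the cost of a slightly longer computation; the paper's contradiction argument is shorter to write but nonconstructive. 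Two cosmetic points: your interval Poincar\'e inequality should carry a factor $2$ if $[f]_{H^\beta}$ is the one-sided double integral over $\{s<t\}$ as in the paper, and in the first direction for $\beta<1/2$ the domination is of course up to the constant $\max\{T^{2\beta},1\}$; neither affects the result.
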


\begin{proof} The case $\beta \in (0, 1/2)$ follows easily from (\ref{eq1.3}). Concerning the case $\beta \in (1/2, 1)$,
it suffices to prove the estimate
$$
\|f\|_{L^2(0, T; V)}^2 \leq C \|f\|_{H^\beta(0, T; V)}^2, \, \forall \, f \in H^\beta(0, T; V),
$$
for some $C > 0$, independent of $f$. If this is not the case, then there exists a sequence $(f_k)_{k \in \N}$ in $H^\beta(0, T; V)$,
such that $\|f_k\|_{L^2(0, T; V)} = 1$ for every $k \in \N$, while  ${\displaystyle \lim_{k \to \infty}}Ê\|f_k\|_{H^\beta(0, T; V)} = 0$.
In particular,
$$ \lim_{k \to \infty} \int_0^T \left (\int_0^T \frac{\|f_k(t) - f_k(s)\|^2_V}{|t-s|^{1+2\beta}} ds \right ) dt
\leq 2 \|f_k\|_{H^\beta(0, T; V)} \to 0, \quad  (k \to \infty). $$
This  implies that, possibly passing to a subsequence, we may assume, for almost every $t$ in $[0, T]$,
\begin{equation}\label{eq1.5}
 \int_0^T  \frac{\|f_k(t) - f_k(s)\|^2_V}{|t-s|^{1+2\beta}} ds \to 0, \quad (k \to \infty).
\end{equation}
By an application of imbedding Sobolev theorems, there  exists $C_1>0$, such that, $\forall \, k \in \N$, $\forall \, t \in [0, T]$,
$$
\|f_k(t) - f_k(0)\|_V \leq C_1 {\|f_k\|'_{H^\beta(0, T; V)}} t^{\beta - 1/2}Ê\leq C_2 t^{\beta - 1/2}.
$$
So, as ${\displaystyle \lim_{k \to \infty}} \|f_k(0)\|_V = 0$, for every $\epsilon>0$ we can choose $t \in [0, T]$, such that
$\|f_k(t)\|_V \leq \epsilon$, $\forall \, k \in \N$, and (\ref{eq1.5}) holds. Hence, we can deduce
$$
1 = \int_0^T \|f_k(s)\|_V^2 ds
\leq 2\left( T^{1+2\beta}Ê\int_0^T\frac{\|f_k(t) - f_k(s)\|^2_V}{|t-s|^{1+2\beta}} ds + T\epsilon^2\right)
\leq 2( T^{1+2\beta}Ê + T)\epsilon^2,
$$
if $k$ is sufficiently large, which is clearly a contradiction.
\end{proof}

We observe that the norm $\|\cdot\|_{H^\beta(0, T; V)}$  will be particularly convenient in the case $0 < \beta < \frac{1}{2}$,
in force of the homogeneity property
\begin{equation}\label{eq1.2}
\|f\|_{H^\beta(0, T; V)}^2 = T^{1-2\beta} \|f(T\cdot)\|_{H^\beta(0, 1; V)}^2 \quad \forall \, T >0.
\end{equation}
For future use, we prove the following
\begin{lemma}\label{le1.1A}
Let $\beta \in (1/2, 1)$. Then there exists $C >0$, independent of $T$, such that
$$
\|f\|_{L^2(0, T; V)}^2
\leq CT\left (\|f(0)\|_V^2 + T^{2\beta -1} \int_0^T \left(\int_0^t \frac{\|f(t) - f(s)\|_V^2}{(t-s)^{1+2\beta}} ds\right) dt\right ),
\quad  \forall \, f \in H^\beta(0, T; V).
$$
\end{lemma}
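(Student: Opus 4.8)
The plan is to reduce the statement to the case $T=1$ by a rescaling in time. The point is that the preceding proposition already gives a norm equivalence on a fixed interval, with a constant depending only on $\beta$; once we apply it at $T=1$ and then rescale, that constant stays genuinely $T$-independent and the explicit powers of $T$ in the assertion emerge automatically as the homogeneity weights of the various terms.

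First I would invoke the preceding proposition at $T=1$. Since for $\beta \in (1/2,1)$ the norm $\|\cdot\|_{H^\beta(0,1;V)}$ defined in (\ref{eq1.1}) is equivalent to $\|\cdot\|'_{H^\beta(0,1;V)}$, there is a constant $C>0$, depending only on $\beta$, such that
$$
\|g\|_{L^2(0,1;V)}^2 \leq C\left(\|g(0)\|_V^2 + \int_0^1\left(\int_0^\tau \frac{\|g(\tau)-g(\sigma)\|_V^2}{(\tau-\sigma)^{1+2\beta}}\,d\sigma\right)d\tau\right)
$$
for every $g \in H^\beta(0,1;V)$. This is exactly the desired inequality in the normalized case, and it is the only analytic ingredient; the rest is a change of variables.

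Next, given $f \in H^\beta(0,T;V)$, I would set $g(\tau):=f(T\tau)$ for $\tau \in [0,1]$ and track how each term transforms. The substitution $t=T\tau$ gives $\|g\|_{L^2(0,1;V)}^2 = T^{-1}\|f\|_{L^2(0,T;V)}^2$, while trivially $g(0)=f(0)$. For the Gagliardo seminorm, the double substitution $t=T\tau$, $s=T\sigma$ produces one factor $T^{-1}$ from $d\tau$, one from $d\sigma$, and a factor $T^{1+2\beta}$ from $(\tau-\sigma)^{-(1+2\beta)}=T^{1+2\beta}(t-s)^{-(1+2\beta)}$, so that the seminorm of $g$ on $[0,1]$ equals $T^{2\beta-1}$ times the corresponding seminorm of $f$ on $[0,T]$. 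Substituting these three identities into the $T=1$ inequality and multiplying through by $T$ reproduces the claimed estimate verbatim, with the same constant $C$.

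There is no real obstacle here: the entire content is the norm equivalence at a fixed interval together with the correct scaling weights. The only point requiring any care is the bookkeeping of the homogeneity exponent, namely verifying that the seminorm scales with the factor $T^{2\beta-1}$ rather than some other power; everything else is a routine change of variables.
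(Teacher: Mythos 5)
Your proof is correct and is essentially the paper's own argument: both rescale to the unit interval, apply the norm equivalence of the preceding Proposition at $T=1$ (so the constant depends only on $\beta$), and recover the factors $T$ and $T^{2\beta-1}$ from the change of variables. The scaling bookkeeping for the Gagliardo seminorm is carried out correctly.
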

\begin{proof} It follows from the chain of inequalities
$$
\begin{array}{c}
\|f\|_{L^2(0, T; V)}^2 = T \| f(T\cdot)\|_{L^2(0, 1; V)}^2 \leq CT \|f(T\cdot)\|_{H^\beta(0, 1; V)}^2 \\ \\
= CT(\|f(0)\|_V^2 + \int_0^1 (\int_0^t \frac{\|f(Tt) - f(Ts)\|_V^2}{(t-s)^{1+2\beta}} ds) dt) \\ \\
= CT(\|f(0)\|_V^2 + T^{2\beta -1} \int_0^T (\int_0^t \frac{\|f(t) - f(s)\|_V^2}{(t-s)^{1+2\beta}} ds) dt).
\end{array}
$$
\end{proof}

\begin{lemma}\label{le1.1}
 Let $\beta \in (0, 1/2)$. Then the following propositions hold.

(I) There exists $C>0$, independent of $T$, such that, $\forall \, f \in H^\beta(0, T; V)$,
$$
\int_0^T (T-t)^{-2\beta} \|f(t)\|_V^2 dt \leq C \|f\|_{H^\beta(0, T; V)}^2.
$$
(II) Let $0 < T < T'$. Given $f$ in $H^\beta(0, T; V)$, we indicate with $\widetilde f$ its extension to $(0, T')$,
such that $\widetilde f(t) = 0$ for a.a. $t \in [T, T')$. Then $\widetilde f \in H^\beta(0, T'; V)$. Moreover, there exists
$C >0$, independent of $T, T', f$, such that
$$
\|\widetilde f \|_{H^\beta(0, T'; V)}^2 \leq C \|f \|_{H^\beta(0, T; V)}^2.
$$
(III) Given $f$ in $H^\beta(0, T; V)$ and $s \in (0, T)$, we define, for $t \in (0, T)$,
\begin{equation}\label{eq1.4}
f_s(t) = \left\{\begin{array}{ll}
f(t+s), & \mbox{ if } t + s < T, \\ \\
0, & \mbox{ if } t + s \geq T.
\end{array}
\right.
\end{equation}
Then $f_s \in H^\beta(0, T; V)$. Moreover, the map $s \to f_s$ belongs to $C((0, T);  H^\beta(0, T; V))$ and
$$
\| f_s \|_{H^\beta(0, T; V)}^2 \leq C \|f \|_{H^\beta(0, T; V)}^2,
$$
with $C>0$, independent of $T$, $s$, $f$.
\end{lemma}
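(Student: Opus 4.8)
My plan is to base the entire lemma on part (I) and to read off all the required $T$-independence (and, in (III), $s$-independence) from the homogeneity identity (\ref{eq1.2}). For (I), I would pass to the reference interval $(0,1)$: setting $g(\tau):=f(T\tau)$ and changing variables turns the left-hand side into $T^{1-2\beta}\int_0^1(1-\tau)^{-2\beta}\|g(\tau)\|_V^2\,d\tau$, while (\ref{eq1.2}) reads $\|g\|_{H^\beta(0,1;V)}^2=T^{2\beta-1}\|f\|_{H^\beta(0,T;V)}^2$. On the fixed interval $(0,1)$ the weight $\tau^{-2\beta}\ge1$ makes the unprimed norm dominate the primed one, so the $(1-\tau)^{-2\beta}$ half of (\ref{eq1.3}), applied on $(0,1)$, gives $\int_0^1(1-\tau)^{-2\beta}\|g\|_V^2\,d\tau\le C_\beta\|g\|_{H^\beta(0,1;V)}^2$ with $C_\beta$ absolute. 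Multiplying by $T^{1-2\beta}$ and inserting the homogeneity identity cancels all powers of $T$ and yields (I). Running exactly the same scaling on the $\tau^{-2\beta}$ half of (\ref{eq1.3}) produces the scale-invariant Hardy estimate
\[
\int_0^\ell r^{-2\beta}\|g(r)\|_V^2\,dr\le C_\beta\Big(\ell^{-2\beta}\|g\|_{L^2(0,\ell;V)}^2+[g]_{H^\beta(0,\ell;V)}^2\Big),\qquad\forall\,\ell>0,
\]
which I reserve for part (III).

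For (II) I would expand $\|\widetilde f\|_{H^\beta(0,T';V)}^2$ from (\ref{eq1.1}). Since $\widetilde f\equiv0$ on $[T,T')$, the weighted term collapses to $\int_0^Tt^{-2\beta}\|f\|_V^2\,dt\le\|f\|_{H^\beta(0,T;V)}^2$, and the Gagliardo double integral splits into the diagonal block $\{t,\tau<T\}$, equal to $[f]_{H^\beta(0,T;V)}^2$, plus the cross block $\{\tau<T\le t\}$ (the block $\{t,\tau>T\}$ vanishes). On the cross block $\widetilde f(t)=0$, and Fubini together with $\int_T^{T'}(t-\tau)^{-1-2\beta}\,dt\le\frac1{2\beta}(T-\tau)^{-2\beta}$ bounds its contribution by $\frac1{2\beta}\int_0^T(T-\tau)^{-2\beta}\|f(\tau)\|_V^2\,d\tau$, which is controlled by (I); this is (II).

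The bound in (III) splits the same way. After the substitution $u=t+s$ the weighted term is $\int_s^T(u-s)^{-2\beta}\|f(u)\|_V^2\,du$; applying the scale-invariant Hardy estimate on $(0,T-s)$ to $G(r):=f(r+s)$ bounds it by $[f]_{H^\beta(0,T;V)}^2$ plus $(T-s)^{-2\beta}\int_s^T\|f\|_V^2\,du$, and since $(T-s)^{-2\beta}\le(T-u)^{-2\beta}$ on $(s,T)$ this residual term is at most $\int_0^T(T-u)^{-2\beta}\|f\|_V^2\,du$, again handled by (I). The seminorm of $f_s$ splits as in (II): the honest-translate block is $\le[f]_{H^\beta(0,T;V)}^2$ and the cross block, where $f_s$ vanishes, reduces to a right-endpoint weighted integral controlled by (I). Hence $\|f_s\|_{H^\beta(0,T;V)}^2\le C\|f\|_{H^\beta(0,T;V)}^2$ with $C$ independent of $s,T$. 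For the continuity of $s\mapsto f_s$ I would combine this uniform operator bound with the density of $C_0^\infty((0,T);V)$ (legitimate since $\beta<1/2$): for such $f$ the truncation never cuts off a nonzero value, so $f_s$ is the restriction of an honest translate of a smooth compactly supported map and depends continuously on $s$, and an $\varepsilon/3$ argument propagates continuity to all of $H^\beta(0,T;V)$.

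The main obstacle throughout is the $T$- and $s$-independence of the constants, which is exactly why one cannot merely quote (\ref{eq1.3}) (whose constant is a priori $T$-dependent) and must route everything through the scaling identity (\ref{eq1.2}). The genuinely delicate point is in (III): the interior singular weight $(u-s)^{-2\beta}$ cannot be absorbed by the origin weight $u^{-2\beta}$ appearing in $\|f\|_{H^\beta(0,T;V)}$, and I expect the crux of the argument to be transferring it, via the scale-invariant Hardy estimate, onto the right-endpoint weight $(T-u)^{-2\beta}$ so that part (I) can finish the job.
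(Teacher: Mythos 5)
Your proofs of (I) and (II) coincide with the paper's: (I) is exactly the rescaling to $(0,1)$ via (\ref{eq1.2}) followed by the $T=1$ case of (\ref{eq1.3}), and (II) is the same splitting of the Gagliardo seminorm into the diagonal block plus the cross block $\{s<T\le t\}$, with $\int_T^{\infty}(t-s)^{-1-2\beta}\,dt=\frac{1}{2\beta}(T-s)^{-2\beta}$ feeding the result back into (I). For (III) you take a genuinely different route. The paper reduces to $T=1$ by (\ref{eq1.2}) and then works on the Fourier side: it uses the extension norm $\inf\{\|F\|_{H^\beta(\R;V)}:F_{|(0,1)}=f\}$, the exact translation invariance and continuity of $s\mapsto F(\cdot+s)$ in $H^\beta(\R;V)$, and the fact that $\chi_{(0,1)}$ is a pointwise multiplier in $H^\beta(\R;V)$ (which is where $\beta<1/2$ enters, via boundedness of extension by zero); this yields the uniform bound and the continuity of $s\mapsto f_s$ simultaneously. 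You instead stay entirely on the real-variable side: you derive a scale-invariant Hardy estimate by rescaling the $t^{-2\beta}$ half of (\ref{eq1.3}), use it to transfer the interior singular weight $(u-s)^{-2\beta}$ onto the right-endpoint weight $(T-u)^{-2\beta}$, split the seminorm of $f_s$ into the honest-translate block and the cross block exactly as in (II), and close everything with (I); continuity then requires a separate density-plus-$\varepsilon/3$ argument (legitimate, since $\mD((0,T))\otimes V$ is dense in $H^\beta(0,T;V)$ for $\beta<1/2$). Your computations check out, including the inequality $(T-s)^{-2\beta}\int_s^T\|f\|_V^2\,du\le\int_s^T(T-u)^{-2\beta}\|f(u)\|_V^2\,du$. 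What your version buys is self-containedness (no Fourier characterization or multiplier lemma) and an explicit display of where $\beta<1/2$ and part (I) are used; what the paper's version buys is brevity and the fact that the continuity statement comes for free from continuity of translations on $\R$, rather than needing a separate approximation step.
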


\begin{proof} (I) follows immediately from (\ref{eq1.2}). In fact,
$$
\begin{array}{c}
\int_0^T (T-t)^{-2\beta} \|f(t)\|_V^2 dt = T^{1-2\beta} \int_0^1 (1-t)^{-2\beta} \|f(Tt)\|_V^2 dt \\ \\
 \leq C T^{1-2\beta} \|f(T\cdot)\|_{H^\beta(0, 1; V)}^2 = C  \|f\|_{H^\beta(0, T; V)}^2.
\end{array}
$$
Concerning (II), we have
$$
\begin{array}{c}
\|\widetilde f\|_{H^\beta(0, T'; V)}^2 = \|f\|_{H^\beta(0, T; V)}^2 + \int_T^{T'} (\int_0^T \frac{\|f(s)\|_V^2}{(t-s)^{1+2\beta}} ds ) dt
\\ \\
\leq \|f\|_{H^\beta(0, T; V)}^2 + \int_0^T (\int_T^\infty (t-s)^{-1-2\beta} dt) \|f(s)\|_V^2 ds \\ \\
=  \|f\|_{H^\beta(0, T; V)}^2 + \frac{1}{2\beta}Ê\int_0^T (T-s)^{-2\beta}Ê\|f(s)\|_V^2 ds,
\end{array}
$$
and the conclusion follows from (I).

In order to prove (III),  we start by considering the case $T = 1$. We adopt the norm
$$
\|f\|''_{H^\beta(0, 1; V)}:= \inf\{\|F\|_{H^\beta(\R; V)}: F_{|(0, 1)} = f\},
$$
with
$$
\|F\|_{H^\beta(\R; V)}^2 := \int_\R (1 + \tau^2)^\beta \|\widehat F(\tau)\|_V^2 d\tau
$$
and $\widehat F$ the Fourier transform of $F$.
It is easy to see that $\|F(\cdot + s)\|_{H^\beta(\R; V)} = \|F\|_{H^\beta(\R; V)}$, $\forall \, s \in \R$,
and the map $s \to F(\cdot + s)$ is continuous from $\R$ to $H^\beta(\R; V)$.
As the trivial extension of an element of $H^\beta(0, 1; V)$ is an element of $H^\beta(\R; V)$, we may think
of the characteristic function $\chi$ of $(0, 1)$ as a pointwise multiplier in $H^\beta(\R; V)$.
So, if $F \in  H^\beta(\R; V)$ and $F_{|(0, 1)} = f$, we have
$$
\|f_s\|''_{H^\beta(0, 1; V)} \leq \|\chi F(\cdot + s)\|_{H^\beta(\R; V)} \leq C \|F(\cdot + s)\|_{H^\beta(\R; V)}
= C \|F\|_{H^\beta(\R; V)},
$$
implying
$$
\|f_s\|''_{H^\beta(0, 1; V)} \leq  C \|f\|''_{H^\beta(0, 1; V)},
$$
for some $C >0$, independent of $s \in (0, 1)$ and $f$. Moreover, if $s_k \to s$,
$$
\|f_{s_k} - f_s \|''_{H^\beta(0, 1; V)} \leq C\|F(\cdot + s_k) - F(\cdot + s)\|_{H^\beta(\R; V)} \to 0, \quad {\text as} \, \, k \to \infty.
$$
 The general case follows from (\ref{eq1.2}), since if $s \in (0, T)$ and $f \in H^\beta(0, T; V)$ then we have
$$
\begin{array}{c}
\|f_s\|_{H^\beta(0, T; V)}^2 = T^{1-2\beta} \|f_s(T\cdot)\|_{H^\beta(0, 1; V)}^2 = T^{1-2\beta} \|[f(T\cdot)]_{s/T}\|_{H^\beta(0, 1; V)}^2 \\ \\
\leq C T^{1-2\beta}  \|f(T\cdot)\|_{H^\beta(0, 1; V)}^2 = C \|f\|_{H^\beta(0, T; V)}^2.
\end{array}
$$
\end{proof}

As we have in mind to apply Theorem \ref{th1.4}, we have to deal with the space $H^{1/4,1/2}(Q_T)'$.
So we consider the following abstract framework.

\medskip

\begin{definition}\label{alfa} Let $V_1$ and $V_2$ be complex Hilbert spaces with $V_1$ densely embedded into $V_2$.
For $0 < \beta < 1/2$, we define the Hilbert space
$$
Y_T:= H^\beta(0, T; V_2) \cap L^2(0, T; V_1),
$$
equipped with the norm
$$
\|f\|_{Y_T}^2 := \|f\|_{H^\beta(0, T; V_2)}^2 + \|f\|_{L^2(0, T; V_1)}^2
$$
and its antidual space $Y_T' = [H^\beta(0, T; V_2) \cap L^2(0, T; V_1)]'$, normed by
$$
\|z\|_{Y_T'} := \sup \{|(z,f)| \, : \, \|f\|_{Y_T} \leq 1\}.
$$
\end{definition}
\medskip
Observe that we shall think of $L^2(0, T; V_2)$ as continuously embedded into $Y_T'$, identifying $g \in L^2(0, T; V_2)$ with the functional
$$
f \to \int_0^T (g(t), f(t))_{V_2} dt.
$$
The first important fact is

\begin{lemma}\label{le1.2}
 $L^2(0, T; V_2)$ is dense in $Y_T'$.
\end{lemma}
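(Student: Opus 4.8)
The plan is to deduce the density from the reflexivity of $Y_T$ via the Hahn--Banach theorem, reducing the assertion to the triviality of the annihilator of $L^2(0, T; V_2)$ in the bidual. Since $Y_T$ is a Hilbert space (its norm is induced by an inner product, being the sum of the two Hilbert norms of $H^\beta(0, T; V_2)$ and $L^2(0, T; V_1)$), it is reflexive, and the canonical embedding identifies $(Y_T')'$ with $Y_T$ itself. Recall that throughout the paper the duals consist of continuous \emph{antilinear} functionals; with this convention the evaluation pairing between $f \in Y_T \cong (Y_T')'$ and $z \in Y_T'$ is, up to complex conjugation, just $z(f)$, so that the annihilator condition will translate into a genuine orthogonality relation in $V_2$.

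Concretely, by Hahn--Banach a subspace of the Banach space $Y_T'$ is dense if and only if every continuous antilinear functional on $Y_T'$ vanishing on it is the zero functional. By reflexivity such functionals are exactly the elements of $Y_T$. Hence it suffices to show: if $f \in Y_T$ satisfies $(g, f) = 0$ for every $g \in L^2(0, T; V_2)$, then $f = 0$, where $(g, f) = \int_0^T (g(t), f(t))_{V_2}\, dt$ is precisely the pairing induced by the embedding $L^2(0, T; V_2) \hookrightarrow Y_T'$ described just before the lemma.

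The final step is immediate once one observes that $Y_T$ embeds continuously into $L^2(0, T; V_2)$; indeed $Y_T = H^\beta(0, T; V_2) \cap L^2(0, T; V_1) \subseteq H^\beta(0, T; V_2) \subseteq L^2(0, T; V_2)$. Thus a given $f \in Y_T$ is itself an admissible choice of $g$, and taking $g = f$ in the annihilator relation yields $\int_0^T \|f(t)\|_{V_2}^2\, dt = 0$, whence $f = 0$ in $L^2(0, T; V_2)$ and therefore in $Y_T$. There is no serious obstacle here: the only point requiring care is the bookkeeping of the antilinear/conjugate conventions, so that the abstract annihilator condition correctly produces the pairing $\int_0^T (g(t), f(t))_{V_2}\, dt$; after that, the containment $Y_T \subseteq L^2(0, T; V_2)$ does all the work.
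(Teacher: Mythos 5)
Your proof is correct and follows essentially the same route as the paper: both arguments invoke the reflexivity of the Hilbert space $Y_T$ to reduce density in $Y_T'$ to showing that any $f \in Y_T$ annihilated by all of $L^2(0,T;V_2)$ must vanish. The paper dismisses that last step as obvious, while you make it explicit by noting $Y_T \hookrightarrow L^2(0,T;V_2)$ and taking $g = f$; this is a fine (indeed the intended) way to finish.
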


\begin{proof} Owing to the reflexivity of $Y_T$, it suffices to show that, if $f \in Y_T$, and $\int_0^T (g(t), f(t))_{V_2} dt = 0$,
$\forall \,g \in L^2(0, T; V_2)$, then $f(t) = 0$ a.e. in $(0,T)$, which is obvious.
\end{proof}

Let $0 < T < T'$. Given $z \in Y_{T'}'$ and recalling the definition of $\widetilde f$, we can define the restriction $z_{|(0, T)}$ of
$z$ to $(0, T)$ as the element of $Y_T'$, such that
\begin{equation}\label{eq1.6}
(z_{|(0, T)}, f):= (z, \widetilde f), \quad f \in Y_T.
\end{equation}

On the other hand, taking $z \in Y_{T}'$, we can define its trivial extension $\widetilde z$ to $(0, T')$ as
\begin{equation}\label{eq1.7A}
(\widetilde z, f):= (z, f_{|(0, T)}), \quad f \in Y_{T'}.
\end{equation}
We observe that these definitions coincide with the natural ones in $L^2(0, T'; V_2)$ and $L^2(0, T; V_2)$.

\begin{lemma} \label{le1.3}
The following propositions hold.

 (I) There exists $C >0$, independent of $T$ and $T'$, such that
\, $\|z_{|(0, T)}\|_{Y_T'}  \leq  C \|z\|_{Y_{T'}'}, \forall z \in Y_{T'}'$.

(II) $\|\widetilde z\|_{Y_{T'}'}  \leq   \|z\|_{Y_{T}'}$, $\forall \,z \in Y_{T}'$.

(III) If \, $g \in L^2(0, T; V_2)$, then \,  $\|g\|_{Y_T'} \leq T^\beta \|g\|_{L^2(0, T; V_2)}$.
\end{lemma}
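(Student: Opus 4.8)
The three assertions are all statements about the antidual norm $\|\cdot\|_{Y_T'}$, and the plan for each is identical: unwind the definition of the supremum over the unit ball of $Y_T$ (resp. $Y_{T'}$), transfer the restriction or extension operation from the functional onto the test function by means of (\ref{eq1.6}) or (\ref{eq1.7A}), and then control the norm of the transformed test function. Since the $Y_T$-norm splits as an $H^\beta(0,T;V_2)$-contribution plus an $L^2(0,T;V_1)$-contribution, these two pieces can be estimated separately, and the mapping properties of Lemma \ref{le1.1} will do the analytic work. For (I) I start from (\ref{eq1.6}): for $z \in Y_{T'}'$ and $f \in Y_T$ one has $(z_{|(0,T)},f)=(z,\widetilde f)$, whence
$$
\|z_{|(0,T)}\|_{Y_T'} = \sup_{\|f\|_{Y_T}\le 1}|(z,\widetilde f)| \le \|z\|_{Y_{T'}'}\,\sup_{\|f\|_{Y_T}\le 1}\|\widetilde f\|_{Y_{T'}}.
$$
It then suffices to bound $\|\widetilde f\|_{Y_{T'}}$ by $C\|f\|_{Y_T}$ with $C$ independent of $T,T'$. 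The $L^2(0,T';V_1)$-part of $\widetilde f$ equals the $L^2(0,T;V_1)$-part of $f$, because $\widetilde f$ vanishes on $(T,T')$; the $H^\beta(0,T';V_2)$-part is controlled by $C\|f\|_{H^\beta(0,T;V_2)}$ via Lemma \ref{le1.1}(II), applied with $V=V_2$, whose constant is $T$-uniform. Combining the two gives the claimed bound.

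For (II) I use (\ref{eq1.7A}): for $z \in Y_T'$ and $f \in Y_{T'}$ one has $(\widetilde z,f)=(z,f_{|(0,T)})$, so
$$
\|\widetilde z\|_{Y_{T'}'} = \sup_{\|f\|_{Y_{T'}}\le 1}|(z,f_{|(0,T)})| \le \|z\|_{Y_T'}\,\sup_{\|f\|_{Y_{T'}}\le 1}\|f_{|(0,T)}\|_{Y_T},
$$
and the sharp constant $1$ will come from the elementary inequality $\|f_{|(0,T)}\|_{Y_T}\le\|f\|_{Y_{T'}}$. I would prove this termwise: the $L^2(0,T;V_1)$-norm of $f_{|(0,T)}$ is obviously at most its $L^2(0,T';V_1)$-norm, while for the $H^\beta$-part I would invoke the explicit norm (\ref{eq1.1}), observing that both the weighted term $\int_0^T t^{-2\beta}\|f(t)\|_{V_2}^2\,dt$ and the Gagliardo double integral over $\{0<s<t<T\}$ are integrals of nonnegative quantities over regions nested inside the corresponding ones for $(0,T')$, and hence cannot increase under restriction.

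For (III), here $g\in L^2(0,T;V_2)$ acts as $f\mapsto\int_0^T(g(t),f(t))_{V_2}\,dt$, so Cauchy--Schwarz in $L^2(0,T;V_2)$ gives $|(g,f)|\le\|g\|_{L^2(0,T;V_2)}\|f\|_{L^2(0,T;V_2)}$. The crucial ingredient is then the embedding estimate $\|f\|_{L^2(0,T;V_2)}\le T^\beta\|f\|_{Y_T}$: since $t^{-2\beta}\ge T^{-2\beta}$ for $t\in(0,T)$, the weighted term in (\ref{eq1.1}) yields
$$
\|f\|_{L^2(0,T;V_2)}^2 \le T^{2\beta}\int_0^T t^{-2\beta}\|f(t)\|_{V_2}^2\,dt \le T^{2\beta}\|f\|_{H^\beta(0,T;V_2)}^2 \le T^{2\beta}\|f\|_{Y_T}^2.
$$
Taking the supremum over $\|f\|_{Y_T}\le 1$ gives the claim.

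The only genuinely delicate point is the uniformity in $T,T'$ in (I), which rests entirely on the $T$-independence of the constant in Lemma \ref{le1.1}(II); everything else reduces to Cauchy--Schwarz and the monotonicity of the explicit norm (\ref{eq1.1}) under restriction to a subinterval. I would therefore be careful to invoke Lemma \ref{le1.1}(II) exactly as stated, and to emphasize that working with the normalization (\ref{eq1.1}) rather than the equivalent (\ref{norm1}) is precisely what makes the weight $t^{-2\beta}$ available, so that both (II) and (III) come out with the sharp, explicit constants $1$ and $T^\beta$ asserted in the statement.
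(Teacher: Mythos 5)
Your proof is correct and follows essentially the same route as the paper: all three parts unwind the antidual norm, transfer the restriction/extension to the test function, and rely on Lemma \ref{le1.1}(II) for the $T$-uniform constant in (I), on the monotonicity of the norm (\ref{eq1.1}) under restriction for the constant $1$ in (II), and on the weight $t^{-2\beta}\ge T^{-2\beta}$ for the factor $T^\beta$ in (III). The only (immaterial) difference is in (III), where you apply Cauchy--Schwarz first and then the weight inequality, whereas the paper inserts $t^{\beta}t^{-\beta}$ before applying Cauchy--Schwarz.
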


\begin{proof}
(I) By Lemma \ref{le1.1}, there exists $C$, independent of $T$ and $T'$, such that
$$
|(z_{|(0, T)}, f)| = |(z,  \widetilde f)| \leq \|z\|_{Y_{T'}'} \|\widetilde f\|_{Y_{T'}} \leq C \|z\|_{Y_{T'}'} \|f\|_{Y_{T}}, \quad \forall \, f \in Y_T.
$$
(II)  $\forall \, f \in Y_{T'}$ then it holds
$$
|(\widetilde z, f)| = |(z, f_{|(0, T)})| \leq \|z\|_{Y_{T}'} \|f_{|(0, T)}\|_{Y_T} \leq \|z\|_{Y_{T}'} \|f\|_{Y_{T'}}.
$$
(III) If $f \in Y_T$, we have
$$
\begin{array}{c}
|(g, f)| = |\int_0^T (g(t), f(t))_{V_2} dt| \leq T^\beta \int_0^T \|g(t)\|_{V_2} t^{-\beta} \|f(t)\| dt \\ \\
 \leq  T^\beta \|g\|_{L^2(0, T; V_2)}Ê(\int_0^T t^{-2\beta} \|f(t)\|_{V_2}^2 dt)^{1/2} \leq T^\beta \|g\|_{L^2(0, T; V_2)} \|f\|_{Y_T}.
\end{array}
$$
\end{proof}

\begin{remark}
{\rm   If $z \in Y_{T'}'$ and $0 < T < T'$, we get
\begin{equation}\label{eq1.7}
(z, \chi_{(0, T)} f) = (z, {\widetilde f}_{|(0, T)}) = (z_{|(0, T)}, f_{|(0, T)}), \quad \forall \, f \in Y_{T'}.
\end{equation}
We can also define $z_{|(T, T')}$, which will be an element of $[H^\beta(T, T'; V_2) \cap L^2(T, T';  V_1)]'$.
Observe that $H^\beta(T, T'; V_2) \cap L^2(T, T';  V_1)$ is a Hilbert space, with the norm
\begin{equation}
\begin{array}{c}
\|f\|_{H^\beta(T, T'; V_2) \cap L^2(T, T';  V_1)}^2 \\ \\
:= \int_T^{T'}Ê[(t-T)^{-2\beta} \|f(t)\|_{V_2}^2 + \|f(t)\|_{V_1}^2 + \int_T^t \frac{\|f(t) - f(s)\|_{V_2}^2}{(t-s)^{1+2\beta}} ds] dt.
\end{array}
\end{equation}
Let $f \in H^\beta(T, T'; V_2) \cap L^2(T, T';  V_1)$. We can consider the element $f_0 \in Y_{T'}$, which is the trivial extension of $f$ to $(0, T')$:
\begin{equation}
f_0(t):= \left\{\begin{array}{ll}
0 & \mbox{ if }Êt \in (0, T], \\ \\
f(t) &  \mbox{ if }Êt \in (T, T'].
\end{array}
\right.
\end{equation}
and set
\begin{equation}\label{eq1.11A}
(z_{|(T, T')}, f) := (z, f_0).
\end{equation}
One can see that, if $f \in Y_{T'}$,
\begin{equation}\label{eq1.11}
(z, \chi_{(T, T')} f) = (z_{|(T, T')}, f_{|(T, T')}).
\end{equation}
We can associate with any element $v \in [H^\beta(T, T'; V_2) \cap L^2(T, T';  V_1)]'$ an element $v(\cdot + T) \in Y_{T'-T}'$ in the following way:
\begin{equation}\label{eq1.12}
(v(\cdot + T), f):= (v, f(\cdot  - T)), \quad f \in Y_{T'-T}.
\end{equation}
Clearly, $v \to v(\cdot + T)$ is an isometry between $[H^\beta(T, T'; V_2) \cap L^2(T, T';  V_1)]'$ and $Y_{T'-T}'$.
}
\end{remark}

\medskip
\begin{lemma}\label{le3.14}
Let $0 < T < T'$. Then the map $z \to (z_{|(0, T)}, z_{|(T, T')})$ is a bicontinuous bijection between $Y_{T'}'$ and
$Y_T' \times  [H^\beta(T, T'; V_2) \cap L^2(T, T';  V_1)]'$.
\end{lemma}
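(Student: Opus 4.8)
The plan is to realize the map in the statement as the antidual of a single ``concatenation'' operator and reduce everything to showing that operator is a topological isomorphism. Write $W:=H^\beta(T,T';V_2)\cap L^2(T,T';V_1)$ and define
\[
E:Y_T\times W\to Y_{T'},\qquad E(f_1,f_2):=\widetilde{f_1}+(f_2)_0,
\]
where $\widetilde{f_1}$ is the extension of $f_1$ by $0$ to $(0,T')$ and $(f_2)_0$ is the extension of $f_2$ by $0$ on $(0,T]$, so that $E(f_1,f_2)$ equals $f_1$ on $(0,T)$ and $f_2$ on $(T,T')$. Writing $E=E_1\circ\pi_1+E_2\circ\pi_2$ with $E_1,E_2$ the two one-sided extensions, the definitions (\ref{eq1.6}) and (\ref{eq1.11A}) give at once $E_1'z=z_{|(0,T)}$ and $E_2'z=z_{|(T,T')}$, so the map in the statement is precisely $E':Y_{T'}'\to (Y_T\times W)'=Y_T'\times W'$. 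Since the antidual of a topological isomorphism is again a topological isomorphism, it suffices to prove that $E$ is a bicontinuous bijection.

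Boundedness of $E$ reduces, by the triangle inequality, to boundedness of $E_1$ and $E_2$ separately. For $E_1$ this is Lemma \ref{le1.1}(II) for the $H^\beta(\cdot;V_2)$ component, the $L^2(\cdot;V_1)$ norm being preserved by extension by zero. For $E_2$ the only non-trivial contribution is the part of the Gagliardo seminorm of $(f_2)_0$ with $t\in(T,T')$, $s\in(0,T)$ (where $(f_2)_0(s)=0$), namely
\[
\int_T^{T'}\|f_2(t)\|_{V_2}^2\Big(\int_0^T(t-s)^{-1-2\beta}\,ds\Big)dt\le\frac{1}{2\beta}\int_T^{T'}(t-T)^{-2\beta}\|f_2(t)\|_{V_2}^2\,dt,
\]
which is controlled by the first term of the norm of $W$; the remaining terms are trivial. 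The operator $E$ is plainly injective, and it is surjective because every $g\in Y_{T'}$ satisfies $g=E(g_{|(0,T)},g_{|(T,T')})$, the candidate inverse being $g\mapsto(g_{|(0,T)},g_{|(T,T')})$. Its first component is bounded since restriction to $(0,T)$ only decreases the weighted term $\int_0^{\cdot}t^{-2\beta}\|\cdot\|^2$ and the Gagliardo seminorm, so $\|g_{|(0,T)}\|_{Y_T}\le C\|g\|_{Y_{T'}}$.

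The main obstacle is the boundedness of the second component, i.e.\ the interior weighted (Hardy-type) estimate
\[
\int_T^{T'}(t-T)^{-2\beta}\|g(t)\|_{V_2}^2\,dt\le C\,\|g\|_{H^\beta(0,T';V_2)}^2,
\]
which is the only term of $\|g_{|(T,T')}\|_W$ sensitive to the behaviour of $g$ at the interior point $T$. For $t$ bounded away from $T$ the weight is bounded and the integral is dominated by $\|g\|_{L^2(0,T';V_2)}^2\le C\|g\|_{H^\beta(0,T';V_2)}^2$. For $t\in(T,2T)$, set $\delta:=t-T$ and average $g$ over $(T-\delta,T)\subset(0,T)$ by $\bar g_t:=\delta^{-1}\int_{T-\delta}^{T}g(s)\,ds$, so that $\|g(t)\|^2\le 2\|g(t)-\bar g_t\|^2+2\|\bar g_t\|^2$. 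Since $t-s\in(\delta,2\delta)$ for $s\in(T-\delta,T)$, Cauchy--Schwarz gives $\delta^{-2\beta}\|g(t)-\bar g_t\|_{V_2}^2\le 2^{1+2\beta}\int_{T-\delta}^{T}\frac{\|g(t)-g(s)\|_{V_2}^2}{(t-s)^{1+2\beta}}\,ds$, whose integral over $t$ is dominated by the Gagliardo seminorm of $g$ on $(0,T')$; while $\|\bar g_t\|^2\le\delta^{-1}\int_{T-\delta}^{T}\|g(s)\|^2\,ds$, and a change in the order of integration reduces $\int_T^{2T}\delta^{-2\beta}\|\bar g_t\|^2\,dt$ to $C\int_0^{T}(T-s)^{-2\beta}\|g(s)\|^2\,ds$, bounded by $C\|g\|_{H^\beta(0,T;V_2)}^2\le C\|g\|_{H^\beta(0,T';V_2)}^2$ via Lemma \ref{le1.1}(I). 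This shows $g\mapsto g_{|(T,T')}$ is bounded into $W$, hence $E^{-1}$ is bounded; thus $E$, and with it $E'$, is a bicontinuous bijection.
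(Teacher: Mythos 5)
Your proof is correct and takes essentially the same route as the paper: both rest on the duality identity $(z,f)=(z_{|(0,T)},f_{|(0,T)})+(z_{|(T,T')},f_{|(T,T')})$, which you package as the transpose of the concatenation isomorphism $E$ while the paper writes it directly as (\ref{eq1.13}) and appeals to Lemma \ref{le1.3}. The only real difference is that you give a self-contained averaging proof of the interior weighted estimate $\int_T^{T'}(t-T)^{-2\beta}\|g(t)\|_{V_2}^2\,dt\le C\|g\|_{H^\beta(0,T';V_2)}^2$, which the paper leaves implicit (it follows from (\ref{eq1.3}) applied on the subinterval $(T,T')$); this is added detail rather than a change of method.
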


\begin{proof} Let $z_0 \in Y_T' $ and let $z_1 \in [H^\beta(T, T'; V_2) \cap L^2(T, T';  V_1)]'$. Then, if $z \in Y_{T'}'$, $z_{|(0, T)} = z_0$,
$ z_{|(T, T')} = z_1$, by (\ref{eq1.7}) and (\ref{eq1.11}) we have
\begin{equation}\label{eq1.13}
(z, f) = (z, \chi_{(0, T)} f) +  (z, \chi_{(T, T')} f) = (z_0, f_{|(0, T)}) + (z_1, f_{|(T, T')}), \quad  \forall \, f \in Y_{T'}.
\end{equation}
On the other hand, the right term in (\ref{eq1.13}) defines an element $z$ of $Y_{T'}'$ such that $z_{|(0, T)} = z_0$, $ z_{|(T, T')} = z_1$.
The bicontinuity follows from Lemma \ref{le1.3} (I)-(II).
\end{proof}

Let us go back now to parabolic problems. We have

 \begin{proposition}\label{pr1.1}
 Assume (H1)-(H2) and let $S_T$ be the operator defined in the statement of Corollary \ref{co1.2}. Then there hold

 (I) the restriction of $S_T$ to $H^{1/4,1/2}(Q_T)' \times H^{1/2}(\Omega) \times L^2(\Sigma_T)$ is injective;

 (II) if $u = S_T(f,u_0,g)$, with $(f,u_0,g) \in H^{1/4,1/2}(Q_T)' \times H^{1/2}(\Omega) \times L^2(\Sigma_T)$, then
 $$
 f = D_t u - Au
 $$
 in the sense of distributions and
 $$u_0 = u(0,\cdot).$$
 \end{proposition}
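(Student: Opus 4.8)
The plan is to prove part (II) first and then deduce the injectivity (I) from it. Throughout, write $u := S_T(f,u_0,g)$. By Theorem \ref{th1.4}, the restriction to $H^{1/4,1/2}(Q_T)' \times H^{1/2}(\Omega) \times L^2(\Sigma_T)$ takes values in $H^{3/4,3/2}(Q_T)$, so in particular $u \in H^{3/4}(0,T;L^2(\Omega)) \hookrightarrow C([0,T];L^2(\Omega))$ and, by Theorem \ref{th1.1}(III), the trace $u(0,\cdot)$ is a well-defined element of $H^{1/2}(\Omega)$. The whole argument consists in testing the defining identity of $S_T$ from Corollary \ref{co1.2} against two families of admissible functions $v$ (those with $Cv \equiv 0$ and $v(T,\cdot)=0$).

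First I would identify the distribution $D_t u - Au$ with $f$. Taking $v = \varphi \in \mD(Q_T)$ in the defining identity, all three boundary contributions drop out (since $\varphi(0,\cdot)=0$ and $T\varphi=0$), leaving $\int_{Q_T} u\,\overline{(D_t\varphi + A^*\varphi)}\,dt\,dx = -(f,\varphi)$; as $\varphi$ is compactly supported, the left-hand side equals, by the definition of distributional derivatives and of the formal adjoint $A^*$, exactly $-\langle D_t u - Au, \varphi\rangle$. Hence $D_t u - Au$ and $f$ coincide on $\mD(Q_T)$. Because $u \in H^{3/4,3/2}(Q_T)$, the weak parabolic theory of Section \ref{seA3} places $D_t u - Au$ in $H^{1/4,1/2}(Q_T)'$: indeed $A$ maps $L^2(0,T;H^{3/2}(\Omega))$ into $L^2(0,T;H^{-1/2}(\Omega)) \hookrightarrow H^{1/4,1/2}(Q_T)'$, and $D_t$ maps $H^{3/4}(0,T;L^2(\Omega))$ into $H^{-1/4}(0,T;L^2(\Omega)) \hookrightarrow H^{1/4,1/2}(Q_T)'$. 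Since $\mD(Q_T)$ is dense in $H^{1/4,1/2}(Q_T)$ by Theorem \ref{th1.1}(I), the two functionals agree and $f = D_t u - Au$.

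To recover the initial datum I would test against $v(t,x) = \eta(t)\zeta(x)$ with $\zeta \in C_0^\infty(\Omega)$ and $\eta \in C^\infty([0,T])$, $\eta(T)=0$. Such $v$ vanishes near $\Gamma$, so $Cv \equiv 0$ and $Tv \equiv 0$, and $v(T,\cdot)=0$; thus $v$ is admissible and the $(g,Tv)$ term disappears. On one hand the defining identity reads $\int_{Q_T} u\,\overline{(D_t v + A^* v)}\,dt\,dx = -(f,v) - (u_0, v(0,\cdot))$. On the other hand, integrating by parts in $x$ (no lateral terms, since $v(t,\cdot) \in C_0^\infty(\Omega)$) and in $t$ (using $u \in C([0,T];L^2(\Omega))$ and $v(T,\cdot)=0$) gives $\int_{Q_T} u\,\overline{(D_t v + A^* v)}\,dt\,dx = -(u(0,\cdot),v(0,\cdot)) - \langle D_t u - Au, v\rangle = -(u(0,\cdot),v(0,\cdot)) - (f,v)$, where the last equality uses the identification $f = D_t u - Au$ in $H^{1/4,1/2}(Q_T)'$. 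Comparing the two expressions yields $(u(0,\cdot)-u_0, \zeta) = 0$ for every $\zeta \in C_0^\infty(\Omega)$; since $C_0^\infty(\Omega)$ is dense in $L^2(\Omega)$ and $u(0,\cdot)-u_0 \in L^2(\Omega)$, we conclude $u(0,\cdot)=u_0$, which finishes (II).

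For (I), suppose $S_T(f,u_0,g)=0$ with the triple in the restricted space. Applying (II) to $u=0$ gives $f = D_t 0 - A0 = 0$ and $u_0 = 0$, so the defining identity collapses to $(g,Tv)=0$ for every admissible $v$. Since $T$ is multiplication by the nowhere-vanishing $t(x')$ (Theorem \ref{th1.2}(I)), it remains to show that the lateral Dirichlet traces $v_{|\Sigma_T}$, as $v$ ranges over $\{v \in H^{1,2}(Q_T): Cv \equiv 0,\ v(T,\cdot)=0\}$, are dense in $L^2(\Sigma_T)$. This is the one genuinely non-formal point, and I expect it to be the main obstacle. I would settle it by a lifting argument: given $\gamma \in C_0^\infty((0,T)\times\Gamma)$, the operator $C$ satisfies (H2) by Theorem \ref{th1.2}(III), so its first-order part has non-vanishing normal component; one can therefore prescribe simultaneously the Dirichlet jet $v_{|\Sigma_T}=\gamma$ and the conormal jet forced by $Cv=0$, and lift them to a $v \in H^{1,2}(Q_T)$ supported near $\Gamma$ and in $(0,T)$, hence with $v(T,\cdot)=0$. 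As $\gamma$ runs over the dense set $C_0^\infty((0,T)\times\Gamma)$, the relation $\int_{\Sigma_T} g\,\overline{t(x')\gamma}\,dt\,d\sigma = 0$ forces $\overline{t}\,g=0$, whence $g=0$. Thus the triple vanishes and $S_T$ is injective on the restricted domain; everything outside the trace-density step is bookkeeping with the transposition identity and integration by parts.
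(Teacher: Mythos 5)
Your architecture differs from the paper's: the paper proves (I) directly (testing with $v\in\mD(Q_T)$, then with $\zeta(t)v_0(x)$, then concluding $g=0$ from $t(x')\neq 0$), and proves $u_0=u(0,\cdot)$ in (II) not by integration by parts but by approximating $(f,u_0,g)$ with regular data via Lemma \ref{le1.2} and using continuity of $S_T$ into $H^{3/4,3/2}(Q_T)$ together with Theorem \ref{th1.1}(III). Your route, (II) first and then (I) as a corollary, is legitimate and economizes the recovery of $f$ and $u_0$; your explicit treatment of the trace-density step needed to get $g=0$, which the paper disposes of in one line, is well placed, and the lifting argument you sketch (using that $C$ has non-vanishing normal component) is the standard way to settle it. However, two steps need repair.

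First, the claim that $A$ maps $L^2(0,T;H^{3/2}(\Omega))$ into $L^2(0,T;H^{-1/2}(\Omega))\hookrightarrow H^{1/4,1/2}(Q_T)'$ is false, and the paper says so explicitly in Remark \ref{re1.3}: for $z\in H^{3/2}(\Omega)$ one cannot conclude $Az\in H^{-1/2}(\Omega)$, because $D_{x_i}$ does not map $H^{1/2}(\Omega)$ into $H^{-1/2}(\Omega)=H_0^{1/2}(\Omega)'$ --- this is precisely the excluded borderline case $\beta-|\alpha|=-1/2$ of Proposition 12.1 in \cite{LiMa1}. In this paper the membership $Au\in H^{1/4,1/2}(Q_T)'$ is a \emph{consequence} of part (II), deduced by difference from $f=D_tu-Au$, not an a priori fact available to prove it. The repair is easy: since $\mD(Q_T)$ is dense in $H^{1/4,1/2}(Q_T)$, the functional $f$ is determined by its restriction to $\mD(Q_T)$ and is itself a distribution; two distributions agreeing on $\mD(Q_T)$ coincide, so $f=D_tu-Au$ needs no a priori placement of $D_tu-Au$ in the dual space. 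This is exactly what the paper's one-line argument does.

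Second, in the initial-datum step you use $\langle D_tu-Au,v\rangle=(f,v)$ for $v=\eta\zeta$ with $\eta(0)=1$, together with an integration by parts in $t$ producing the boundary term $-(u(0,\cdot),v(0,\cdot))$. But the identity $D_tu-Au=f$ has only been established against $\mD(Q_T)$, and your $v$ does not vanish near $t=0$; since $D_tu$ is merely an element of $H^{-1/4}(0,T;L^2(\Omega))$, the boundary term at $t=0$ is not free. This can be fixed with a cutoff $\theta_k$ vanishing near $t=0$: one checks that $\theta_kv\to v$ in $H^{1/4,1/2}(Q_T)$ (possible because $1/4<1/2$, so no trace obstruction at $t=0$), that $\int_{Q_T}u\,\overline{\theta_k'v}\to(u(0,\cdot),v(0,\cdot))$ by continuity of $t\mapsto\int_\Omega u(t)\overline{\zeta}$, and that $\int_{Q_T}u\,\overline{A^*(\theta_kv)}\to\int_{Q_T}u\,\overline{A^*v}$ by dominated convergence. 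As written, though, the step is asserted rather than justified; the paper's approximation-of-data argument avoids the issue entirely.
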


\begin{proof} (I) Let $(f,u_0,g) \in H^{1/4,1/2}(Q_T)' \times H^{1/2}(\Omega) \times L^2(\Sigma_T)$ be such that $S_T(f,u_0,g) = 0$, that is,
 $$
 - (f, v) -  \int_\Omega u_0(x) \overline{v(0, x)} dx + \int_{\Sigma_T}Êg(t,x') \overline{t(x') v(t,x')} dt d\sigma = 0
 $$
 $$\forall v \in H_T:=\left\{v \in H^{1,2}(Q_T): Cv = 0, v(T,\cdot) = 0\right\}. $$
 Taking $v \in \mathcal D(Q_T)$, we obtain $(f,v) = 0$. As $\mathcal D(Q_T)$ is dense in $H^{1/4,1/2}(Q_T)$ (cf. Theorem \ref{th1.1} (I)),
 we obtain $f = 0$. Next, let us fix $v_0 \in \mD(\Omega)$ and take
 $$
 v(t,x):= \zeta(t) v_0(x),
 $$
 with $\zeta \in C^\infty([0, T])$, such that $\zeta(0) = 1$ and $\zeta(T) = 0$. We deduce
 $$
 \int_\Omega u_0(x) \overline{v_0(x)} dx = 0, \quad \forall \, v_0 \in \mD(\Omega),
 $$
 implying $u_0 = 0$. Hence we conclude that
 $$\int_{\Sigma_T}Êg(t,x') \overline{t(x') v(t,x')} dt d\sigma = 0, \quad \forall \,v \in H_T,$$
 which implies, as $t(x') \neq 0$, $\forall \, x' \in \Gamma$, that $g = 0$.

 (II) Taking $v \in \mD(Q_T)$ we have
 $$
 \int_{Q_T}  u(t,x) [\overline{D_tv(t,x) + A^*v(t,x)}] dt dx =  - (f, v),
 $$
and then $D_tu - Au = f$ (we recall that $H^{1/4,1/2}(Q_T)'$ is a space of distributions). Next, we consider
a sequence $(f_k, u^k_0, g_k)$ in $L^2(Q_T) \times H^1(\Omega) \times H^{1/4,1/2}(\Sigma_T)$, converging
to $(f, u_0, g)$ in  $H^{1/4,1/2}(Q_T)' \times H^{1/2}(\Omega) \times L^2(\Sigma_T)$  (we employ Lemma \ref{le1.2}
to show the existence of such a sequence). Setting $u_k:= S_T(f_k, u^k_0, g_k)$, then the sequence $(u_k)_{k\in \N}$ converges to
$u \in H^{3/4,3/2}(Q_T)$,  so that, by Theorem \ref{th1.1} (III), we infer $(u^k_0)_{k \in \N} = (u_k(0,\cdot))_{k \in \N}$ converging
to $u(0,\cdot)$ in $H^{1/2}(\Omega)$. And finally we get $u_0 = u(0, \cdot)$.
 \end{proof}

\begin{remark}\label{re1.3}
 {\rm  On account of the Hahn-Banach theorem, it is possible to identify the elements of $H^{1/4,1/2}(Q_T)'$
 with the distributions $z$ in $Q_T$, which can be represented (in not unique way) in the form
 $$
 z = z_0 + z_1,
 $$
with $z_0 \in H^{1/4,0}(Q_T)' = H^{-1/4}(0, T; L^2(\Omega))$ and $z_1 \in L^2(0, T; H^{-1/2}(\Omega))$ (see \cite{BeLo1}, Theorem 2.7.1).
Observe that if $u \in H^{3/4,3/2}(Q_T)$, then $D_tu \in H^{-1/4}((0, T); L^2(\Omega))$ (see Proposition 12.1 in \cite{LiMa1}, Ch. I,
extended to the vector valued case, and also \cite{Am1}, Theorem 4.4.2).
However, as (in general) if $z \in H^{3/2}(\Omega)$ it does not hold  $Az \in H^{-1/2}(\Omega)$,
then we cannot infer as well that $Au \in H^{1/4,1/2}(Q_T)'$.
On the other hand, we deduce by difference from Proposition \ref{pr1.1} (II)
that, if $u = S_T(f,u_0,g)$, with $(f,u_0,g) \in H^{1/4,1/2}(Q_T)' \times H^{1/2}(\Omega) \times L^2(\Sigma_T)$,
then necessarily $Au \in H^{1/4,1/2}(Q_T)'$.

We observe also that, if $u \in H^{3/4,3/2}(Q_T)$, then $u_{|\Sigma_T} \in H^{1/2,1}(\Sigma_T) \hookrightarrow L^p(0, T; L^2(\Gamma))$,
$\forall p \in [1, \infty)$ (see \cite{Si1}, Lemma 8).}
\end{remark}

\medskip
We introduce now the following functional space
\begin{definition}\label{de3.1}
 Assume (H1)-(H2). We indicate with $X_T$ the range of $S_T$, restricted to $H^{1/4,1/2}(Q_T)' \times H^{1/2}(\Omega) \times L^2(\Sigma_T)$.
 If $u \in X_T$, then $Au \in H^{1/4,1/2}(Q_T)'$ (cf. Remark \ref{re1.3}). Moreover, by Proposition \ref{pr1.1} (I), the element
 $Bu:= g \in L^2(\Sigma_T)$ is uniquely determined. Hence, for a fixed $p \in (2, \infty)$, we can introduce in $X_T$ the norm
\begin{equation}\label{eq1.30}
 \|u\|_{X_T} := \|u\|_{H^{3/4,3/2}(Q_T)} + \|u_{|\Sigma_T}\|_{L^p(0, T; L^2(\Gamma))} + \|Au\|_{H^{1/4,1/2}(Q_T)'} + \|Bu\|_{L_2(\Sigma_T)}.
 \end{equation}
  \end{definition}
 \begin{lemma}
 Assume (H1)-(H2). Then the following propositions hold.

  (I) Let $u \in H^{3/4,3/2}(Q_T)$. Then $u \in X_T$ if and only if $Au \in H^{1/4,1/2}(Q_T)'$ and there exists a sequence
  $(u_k)_{k \in \N}$ in $H^{1,2}(Q_T)$, such that
  \begin{equation}\label{eq1.31}
  \|u_k - u\|_{H^{3/4,3/2}(Q_T)}^2 +  \|Au_k - Au\|_{H^{1/4,1/2}(Q_T)'}^2 \to 0 \quad (k \to \infty)
  \end{equation}
  with $(Bu_k)_{k \in \N}$ converging in $L^2(\Sigma_T)$.

  (II) $X_T$ is a Banach space.
 \end{lemma}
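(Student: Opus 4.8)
The plan is to make everything revolve around the solution operator $S_T$, and in particular around its continuity as a map into $H^{3/4,3/2}(Q_T)$. The one ingredient I need beyond Theorem \ref{th1.4} (which only asserts that the range lies in $H^{3/4,3/2}(Q_T)$) is that the restricted $S_T : H^{1/4,1/2}(Q_T)' \times H^{1/2}(\Omega) \times L^2(\Sigma_T) \to H^{3/4,3/2}(Q_T)$ is \emph{bounded}. This follows from the closed graph theorem: $S_T$ is already continuous into $L^2(Q_T)$ (the standard Lions--Magenes a priori bound behind Corollary \ref{co1.2}, obtained by testing the defining identity against the adjoint solutions furnished by Corollary \ref{co1.1}), and since $H^{3/4,3/2}(Q_T) \hookrightarrow L^2(Q_T)$, any graph limit in $H^{3/4,3/2}(Q_T)$ must agree with the $L^2(Q_T)$-limit, so the graph is closed. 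I also record two elementary continuities on $H^{3/4,3/2}(Q_T)$: the map $D_t$ sends it into $H^{-1/4}(0,T;L^2(\Omega)) \hookrightarrow H^{1/4,1/2}(Q_T)'$ (Remark \ref{re1.3}), and $u \mapsto u(0,\cdot)$ sends it into $H^{1/2}(\Omega)$ (Theorem \ref{th1.1}(III)).

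For (I), in the forward direction I write $u = S_T(f,u_0,g)$ with $(f,u_0,g)$ in the restricted triple; then $Au \in H^{1/4,1/2}(Q_T)'$ by Remark \ref{re1.3}, and $f = D_t u - Au$, $u_0 = u(0,\cdot)$ by Proposition \ref{pr1.1}(II). Using Lemma \ref{le1.2} (density of $L^2(Q_T)$ in $H^{1/4,1/2}(Q_T)'$) together with the density of $H^1(\Omega)$ in $H^{1/2}(\Omega)$ and of $H^{1/4,1/2}(\Sigma_T)$ in $L^2(\Sigma_T)$, I approximate $(f,u_0,g)$ by data $(f_k,u_0^k,g_k)\in L^2(Q_T)\times H^1(\Omega)\times H^{1/4,1/2}(\Sigma_T)$ and set $u_k := S_T(f_k,u_0^k,g_k)$. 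By the last assertion of Corollary \ref{co1.2}, $u_k \in H^{1,2}(Q_T)$ solves the classical problem, so $Bu_k = g_k$ and $Au_k = D_t u_k - f_k$; continuity of $S_T$ gives $u_k \to u$ in $H^{3/4,3/2}(Q_T)$, whence $Au_k - Au = D_t(u_k-u)-(f_k-f)\to 0$ in $H^{1/4,1/2}(Q_T)'$ (both terms by the continuities above and by construction) and $Bu_k = g_k \to g = Bu$ in $L^2(\Sigma_T)$, which is exactly (\ref{eq1.31}) together with convergence of $(Bu_k)$. For the converse, each $u_k \in H^{1,2}(Q_T)$ equals, by Corollary \ref{co1.2}, $S_T(f_k,u_0^k,g_k)$ with $f_k := D_t u_k - Au_k \in L^2(Q_T)$, $u_0^k := u_k(0,\cdot)\in H^1(\Omega)$ and $g_k := Bu_k \in H^{1/4,1/2}(\Sigma_T)$; the hypotheses force $f_k \to D_t u - Au$ in $H^{1/4,1/2}(Q_T)'$, $u_0^k \to u(0,\cdot)$ in $H^{1/2}(\Omega)$ (trace continuity) and $g_k \to g$ in $L^2(\Sigma_T)$, so by continuity of $S_T$ and uniqueness of the limit $u = S_T(D_t u - Au,\,u(0,\cdot),\,g)$, i.e. $u \in X_T$.

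For (II), I note that the restricted $S_T$ is a linear bijection of $\mathcal H := H^{1/4,1/2}(Q_T)' \times H^{1/2}(\Omega) \times L^2(\Sigma_T)$ onto $X_T$ (injective by Proposition \ref{pr1.1}(I), onto by Definition \ref{de3.1}), with inverse $u \mapsto (D_t u - Au,\, u(0,\cdot),\, Bu)$. Estimating each term of the norm (\ref{eq1.30}) by the continuity of $S_T$, of $D_t$ and of the traces, and controlling the $L^p(0,T;L^2(\Gamma))$ term through the embedding $H^{1/2,1}(\Sigma_T)\hookrightarrow L^p(0,T;L^2(\Gamma))$ from Remark \ref{re1.3}, I obtain $\|u\|_{X_T}\le C\|(f,u_0,g)\|_{\mathcal H}$; conversely $\|f\|_{H^{1/4,1/2}(Q_T)'}\le \|D_t u\|_{H^{1/4,1/2}(Q_T)'}+\|Au\|_{H^{1/4,1/2}(Q_T)'}$, $\|u_0\|_{H^{1/2}(\Omega)}\le C\|u\|_{H^{3/4,3/2}(Q_T)}$ and $\|g\|_{L^2(\Sigma_T)} = \|Bu\|_{L^2(\Sigma_T)}$ give $\|(f,u_0,g)\|_{\mathcal H}\le C\|u\|_{X_T}$. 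Hence $S_T$ is a bicontinuous isomorphism of the Banach space $\mathcal H$ onto $(X_T,\|\cdot\|_{X_T})$, so completeness transfers and $X_T$ is a Banach space (and $\|\cdot\|_{X_T}$ is genuinely a norm, since its $H^{3/4,3/2}$ part already separates points).

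The main obstacle is the continuity of $S_T$ into $H^{3/4,3/2}(Q_T)$, on which every convergence and every estimate above rests: the closed-graph step requires the $L^2(Q_T)$ a priori bound, which in turn relies on the invertibility of the adjoint parabolic operator from Corollary \ref{co1.1}. The only other point needing care is verifying, in the converse of (I) and in (II), that $u_k \in H^{1,2}(Q_T)$ really produces data in the right spaces, namely $Bu_k \in H^{1/4,1/2}(\Sigma_T)$ and $u_k(0,\cdot)\in H^1(\Omega)$ by Theorem \ref{th1.1}(II)--(III), so that Corollary \ref{co1.2} legitimately identifies $u_k$ with $S_T$ applied to its own data.
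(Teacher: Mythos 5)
Your argument is correct, and it reaches the paper's conclusions by a partly different route. The forward implication in (I) is essentially the paper's own proof: approximate the data via Lemma \ref{le1.2} and the standard densities, set $u_k = S_T(f_k,u_0^k,g_k)$, and recover $Au_k \to Au$ by difference using $D_t : H^{3/4,3/2}(Q_T) \to H^{-1/4}(0,T;L^2(\Omega)) \hookrightarrow H^{1/4,1/2}(Q_T)'$. Where you diverge is in isolating the boundedness of $S_T : \mathcal H := H^{1/4,1/2}(Q_T)'\times H^{1/2}(\Omega)\times L^2(\Sigma_T) \to H^{3/4,3/2}(Q_T)$ as a separately justified ingredient (closed graph theorem on top of the $L^2(Q_T)$ transposition bound behind Corollary \ref{co1.2}) and then funnelling everything through it: the converse of (I) becomes convergence of the data triples of the $u_k$ in $\mathcal H$ plus continuity of $S_T$ and uniqueness of limits, and (II) becomes the assertion that $S_T$ is a bicontinuous bijection of the Banach space $\mathcal H$ onto $(X_T,\|\cdot\|_{X_T})$, so completeness transfers. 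The paper never invokes the closed graph theorem (it treats continuity into $H^{3/4,3/2}(Q_T)$ as part of Theorem \ref{th1.4} and uses it only in the forward direction); for the converse of (I) and for completeness in (II) it instead passes to the limit in the Green's-formula identity defining $S_T$, tested against each fixed $v\in H^{1,2}(Q_T)$ with $Cv=0$ and $v(T,\cdot)=0$, which needs only convergence of the individual data paired with fixed test functions. Your version is shorter and makes explicit the isomorphism $X_T\cong\mathcal H$ that the paper exploits only implicitly later (e.g.\ in Proposition \ref{pr1.3}); the paper's version stays closer to the weak formulation and avoids any appeal to open-mapping-type theorems. Your consistency check that each $u_k\in H^{1,2}(Q_T)$ coincides with $S_T$ applied to $(D_tu_k-Au_k,\,u_k(0,\cdot),\,Bu_k)$ is the same step the paper carries out via Proposition \ref{pr1.1} (II), so no gap remains.
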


 \begin{proof} (I) Let $u = S_T(f,u_0,g) \in X_T$ and take $(f_k, u^k_0, g_k) \in L^2(Q_T) \times H^1(\Omega) \times H^{1/4,1/2}(\Sigma_T)$ such that
 $$
 \|f_k - f\|_{H^{1/4,1/2}(Q_T)'}^2 + \|u^k_0 - u_0\|_{H^{1/2}(\Omega)}^2 + \|g_k - g\|_{L^2(\Sigma_T)}^2 \to 0 \quad (k \to \infty).
 $$
 Setting  $u_k:= S_T(f_k, u^k_0, g_k)$, then $u_k \in H^{1,2}(Q_T)$ and $\|u_k - u\|_{H^{3/4,3/2}(Q_T)}^2 \to 0$ $(k \to \infty)$ from which
 $$\|D_tu_k - D_tu\|_{H^{-1/4}((0, T); L^2(\Omega))} \to 0 \quad (k \to \infty).$$
 By difference
 $$
 Au_k = -f_k + D_tu_k \to -f + D_tu = Au \quad (k \to \infty)
 $$
 in $H^{1/4,1/2}(Q_T)'$. Moreover, $Bu_k = g_k \to g$ in $L^2(\Sigma_T)$.

 On the other hand, let $u \in H^{3/4,3/2}(Q_T)$ be such that $Au \in H^{1/4,1/2}(Q_T)'$ and assume  that there exists a sequence
 $(u_k)_{k \in \N} \in H^{1,2}(Q_T)$ satisfying (\ref{eq1.31}) with  $\|Bu_k - g\|_{L^2(\Sigma_T)}Ê\to 0$ ($k \to \infty$), for some $g \in L^2(\Sigma_T)$.
 Consider now $v \in H^{1,2}(Q_T)$, with $Cv = 0$ and $v(T,\cdot) = 0$. Then, $\forall \, k \in \N$,
 \begin{equation}\label{eq1.32}
 \begin{array}{c}
 \int_{Q_T}  u_k(t,x) [\overline{D_tv(t,x) + A^*v(t,x)}] dt dx = \\ \\
= - \int_{Q_T} (D_tu_k(t,x)  - Au_k(t,x)) \overline{v(t,x)} dt dx -  \int_\Omega  u_k(0,x) \overline{v(0,x)}] dx \\ \\
+  \int_{\Sigma_T}ÊBu_k(t,x') \overline{Tv(t,x')} dt d\sigma.
\end{array}
\end{equation}
From (\ref{eq1.31}) we have that
$$
\|D_tu_k - D_tu\|_{H^{-1/4}(0, T; L^2(\Omega))} + \|u_k(0,\cdot) - u(0,\cdot)\|_{H^{1/2}(\Omega)}Ê\to 0 \quad (k \to \infty).
$$
So we can pass to the limit in (\ref{eq1.32}) for $k \to \infty$, obtaining
$$
 \begin{array}{c}
 \int_{Q_T}  u(t,x) [\overline{D_tv(t,x) + A^*v(t,x)}] dt dx = \\ \\
= - (D_tu - Au, v)-  \int_\Omega  u(0,x) \overline{v(0,x)}] dx
+  \int_{\Sigma_T}Êg(t,x') \overline{Tv(t,x')} dt d\sigma.
\end{array}
$$
We can conclude that $u = S_T(D_tu- Au, u(0,\cdot), g)$.

(II) We prove only the completeness. Let $(u_k)_{k \in \N}$ be a Cauchy sequence in $X_T$.
Then there exists $u$ in $H^{3/4,3/2}(Q_T)$ such that
$$
\|u_k - u\|_{H^{3/4,3/2}(Q_T)} \to 0, \quad (k \to \infty).
$$
It follows that
$$
\begin{array}{c}
\|u_{k|\Sigma_T} - u_{|\Sigma_T}\|_{L^p(0, T; L^2(\Gamma))} +  \|D_tu_k - D_tu\|_{H^{-1/4}(0, T; L^2(\Omega))} \\ \\
 + \|u_k(0,\cdot) - u(0, \cdot)\|_{H^{1/2}(\Omega)} \to 0 \quad (k \to \infty),
\end{array}
$$
and $Au_k \to Au$ in $\mD'(Q_T)$. As $(Au_k)_{k \in \N}$ is a Cauchy sequence in $H^{1/4,1/2}(Q_T)'$, we deduce that
$Au \in H^{1/4,1/2}(Q_T)'$ and
$$
\|Au_k - Au\|_{H^{1/4,1/2}(Q_T)'} \to 0 \quad (k \to \infty).
$$
Let $u_k = S_T(f_k,u^k_0, g_k)$. Then by Proposition \ref{pr1.1} (II) we infer $f_k = D_tu_k - Au_k \to D_tu - Au \in H^{1/4,1/2}(Q_T)'$
and $u_0^k = u_k(0,\cdot)$. Moreover, there exists $g \in L^2(\Sigma_T)$ such that
$$\|Bu_k - g\|_{L^2(\Sigma_T)} =  \|g_k - g\|_{L^2(\Sigma_T)}\to 0 \quad  (k \to \infty). $$
 Let
$v \in H^{1,2}(Q_T)$, with $Cv = 0$ and $v(T,\cdot) = 0$. Then, $\forall \, k \in \N$,
$$
\begin{array}{c}
\int_{Q_T}  u_k(t,x) [\overline{D_tv(t,x) + A^*v(t,x)}] dt dx = \\ \\
 - (f_k, v) - \int_\Omega u_0^k(0,x) \overline{v(0,x)} dx + \int_{\Sigma_T}Êg_k(t,x') \overline{Tv(t,x')} dt d\sigma.
 \end{array}
 $$
 Passing to the limit, for $k \to \infty$, we obtain
 $$
\begin{array}{c}
\int_{Q_T}  u(t,x) [\overline{D_tv(t,x) + A^*v(t,x)}] dt dx = \\ \\
 - (D_tu - Au, v) - \int_\Omega u(0,x) \overline{v(0,x)} dx + \int_{\Sigma_T}Êg(t,x') \overline{Tv(t,x')} dt d\sigma,
 \end{array}
 $$
 which implies  $u = S(D_tu - Au, u(0,\cdot), g)$ and
 $$
 \begin{array}{c}
 \|u - u_k\|_{X_T}
 = \|u - u_k\|_{H^{3/4,3/2}(Q_T)} + \|u_{k|\Sigma_T} - u_{|\Sigma_T}\|_{L^p(0, T; L^2(\Gamma))} \\ \\
 + \|Au - Au_k\|_{H^{1/4,1/2}(Q_T)'} + \|g - Bu_k\|_{L_2(\Sigma_T)} \to 0, \quad (k \to \infty).
 \end{array}
 $$
 \end{proof}

\begin{remark}
{\rm We have already observed that $H^{1/4,1/2}(Q_{T})'$ is a space of distributions in $Q_T$. Let $0 < T < T'$ and $f \in  H^{1/4,1/2}(Q_{T'})'$.
We consider $f_{|(0, T)}$ and $f_{|(T, T')}$ defined in (\ref{eq1.6}) and (\ref{eq1.11A}). They can be identified with the restrictions of $f$
(in the sense of distributions) to  $Q_T$ and $(T, T') \times \Omega$, respectively.
We recall again the notation (\ref{eq1.12}).
Then $f_{|(0, T)} \in H^{1/4,1/2}(Q_{T})'$ and $f_{|(T, T')}(T + \cdot) \in H^{1/4,1/2}(Q_{T'-T})'$. }
\end{remark}

\begin{proposition}\label{pr1.2}
 Assume (H1)-(H2), $0 < T < T'$, $f \in  H^{1/4,1/2}(Q_{T'})'$, $u_0 \in H^{1/2}(\Omega)$ and $g \in L^2(\Sigma_{T'})$. We set
 $$
 \begin{array}{ccc}
 f_0:= f_{|(0, T)}, & g_0:= g_{|\Sigma_T}, &  v_0:= S_T(f_0, u_0, g_0),
 \end{array}
 $$
 $$
 \begin{array}{cccc}
 f_1:= f_{|(T, T')}(T+\cdot), & u_1:= v_0(T, \cdot), &  g_1:= g_{|(T, T') \times \Gamma}(T + \cdot), & v_1:= S_{T'-T}(f_1, u_1, g_1).
 \end{array}
 $$
 Then
 $$
 S_{T'}(f,u_0,g)(t, \cdot) = \left\{\begin{array}{lll}
 v_0(t, \cdot) & \mbox{Êif }Ê& t \in [0, T], \\ \\
 v_1(t - T, \cdot) & \mbox{Êif }Ê& t \in [T, T'].
 \end{array}
 \right.
 $$
 \end{proposition}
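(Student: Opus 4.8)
The plan is to rely on the uniqueness in Corollary~\ref{co1.2}. Since $S_{T'}(f,u_0,g)$ is the unique element of $L^2(Q_{T'})$ satisfying the variational identity stated there, it suffices to introduce the concatenation
$$
w(t,\cdot) := \begin{cases} v_0(t,\cdot), & t\in[0,T],\\ v_1(t-T,\cdot), & t\in[T,T'], \end{cases}
$$
to observe that $w\in L^2(Q_{T'})$, and to check that $w$ satisfies that same identity. First I would verify that all the objects are legitimate. By Theorem~\ref{th1.4} we have $v_0\in H^{3/4,3/2}(Q_T)$; since $3/4>1/2$, the embedding $H^{3/4}(0,T;L^2(\Omega))\hookrightarrow C([0,T];L^2(\Omega))$ together with the spatial regularity gives, through the endpoint analogue of Theorem~\ref{th1.1}~(III) at $t=T$, that $u_1=v_0(T,\cdot)\in H^{1/2}(\Omega)$. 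Combined with $f_1\in H^{1/4,1/2}(Q_{T'-T})'$ and $g_1\in L^2(\Sigma_{T'-T})$ (from the preceding remark), this makes $v_1=S_{T'-T}(f_1,u_1,g_1)$ well defined, with $v_1\in H^{3/4,3/2}(Q_{T'-T})$.

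Fix now a test function $v\in H^{1,2}(Q_{T'})$ with $Cv=0$ and $v(T',\cdot)=0$, and split $\int_{Q_{T'}} w\,\overline{(D_tv+A^*v)}$ into the integrals over $[0,T]$ and $[T,T']$. For the second piece I would substitute $t\mapsto t-T$ and set $\widetilde v(s,\cdot):=v(s+T,\cdot)$; then $\widetilde v\in H^{1,2}(Q_{T'-T})$ satisfies $C\widetilde v=0$ and $\widetilde v(T'-T,\cdot)=v(T',\cdot)=0$, so it is admissible for $S_{T'-T}$ and the defining identity for $v_1$ applies directly, yielding $-(f_1,\widetilde v)-(u_1,v(T,\cdot))+(g_1,T\widetilde v)$.

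The main obstacle is the first piece, because $v_{|Q_T}$ need not vanish at $t=T$, so the defining identity of $S_T$ is not directly applicable and one needs the version of it carrying a boundary term at the final time. I would obtain this by density: choose $(f_0^k,u_0^k,g_0^k)\in L^2(Q_T)\times H^1(\Omega)\times H^{1/4,1/2}(\Sigma_T)$ converging to $(f_0,u_0,g_0)$ in $H^{1/4,1/2}(Q_T)'\times H^{1/2}(\Omega)\times L^2(\Sigma_T)$ (using Lemma~\ref{le1.2} and standard density), so that the strong solutions $v_0^k:=S_T(f_0^k,u_0^k,g_0^k)$ lie in $H^{1,2}(Q_T)$. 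Applying the space-time Green's formula following Theorem~\ref{th1.2} to $v_0^k$ and to $v_{|Q_T}$ gives
$$
\int_{Q_T} v_0^k\,\overline{(D_tv+A^*v)} = -(f_0^k,v_{|Q_T})-(u_0^k,v(0,\cdot))+(g_0^k,Tv)+\int_\Omega v_0^k(T,\cdot)\,\overline{v(T,\cdot)}\,dx .
$$
Letting $k\to\infty$, and using $v_0^k\to v_0$ in $H^{3/4,3/2}(Q_T)$ (continuity of $S_T$, Theorem~\ref{th1.4}), hence $v_0^k(T,\cdot)\to u_1$ in $H^{1/2}(\Omega)$, I obtain the same identity for $v_0$ with the final term $\int_\Omega u_1\,\overline{v(T,\cdot)}\,dx$.

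It remains to add the two contributions. The terms $\int_\Omega u_1\,\overline{v(T,\cdot)}\,dx$ and $-(u_1,v(T,\cdot))$ cancel, since $u_1\in L^2(\Omega)$. For the data I would use the decomposition formula (\ref{eq1.13}) of Lemma~\ref{le3.14} together with the shift isometry (\ref{eq1.12}) to get $-(f_0,v_{|Q_T})-(f_1,\widetilde v)=-(f,v)$, and the additivity of the integral over $\Sigma_{T'}$, noting that $T$ acts only in the space variable so that $T\widetilde v=(Tv)(\cdot+T)$, to get $(g_0,Tv)+(g_1,T\widetilde v)=(g,Tv)$. Hence $w$ satisfies the variational identity defining $S_{T'}(f,u_0,g)$, and $w=S_{T'}(f,u_0,g)$ by uniqueness, which is the claim. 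The only genuinely delicate point is the limiting argument producing the final-time Green identity for the ultraweak solution $v_0$; the rest is bookkeeping with the restriction and shift operators of Section~\ref{seA3}.
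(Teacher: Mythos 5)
Your proof is correct, but it is organized differently from the paper's. The paper's own proof is a two--line reduction: the identity is immediate when $(f,u_0,g)\in L^2(Q_{T'})\times H^1(\Omega)\times H^{1/4,1/2}(\Sigma_{T'})$, because then all the $v_i$ are genuine $H^{1,2}$ solutions and the concatenation property follows from uniqueness in Theorem \ref{th1.3}; the general case is then obtained by approximating the data (Lemma \ref{le1.2}) and passing to the limit in the \emph{conclusion}, which requires continuity of $S_T$, $S_{T'-T}$, $S_{T'}$ into $H^{3/4,3/2}$ (Theorem \ref{th1.4}), of the restriction maps of Lemma \ref{le1.3}, and of the trace $v_0\mapsto v_0(T,\cdot)$ into $H^{1/2}(\Omega)$. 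You instead verify directly that the concatenated function satisfies the ultraweak variational identity of Corollary \ref{co1.2} characterizing $S_{T'}(f,u_0,g)$, splitting the test integral at $t=T$; the only place you invoke density is to establish the intermediate Green identity with the final-time boundary term $\int_\Omega v_0(T,\cdot)\overline{v(T,\cdot)}\,dx$ for the ultraweak solution $v_0$, after which the cancellation of the $(u_1,v(T,\cdot))$ terms and the additivity of $(f,\cdot)$ via (\ref{eq1.13}) and the shift isometry (\ref{eq1.12}) do the rest. The two arguments use the same ingredients (the same density lemma, the same continuity of $S_T$, the same trace at $t=T$), but yours localizes the limiting argument to a single identity and then concludes by uniqueness in the weak formulation rather than by convergence of the whole construction; this makes explicit exactly which continuity properties are being used, at the cost of having to justify the final-time Green formula for ultraweak solutions, which is indeed the one genuinely delicate step and which you handle correctly.
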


\begin{proof} The statement holds if $f \in L^2(Q_{T'})$, $u_0 \in H^1(\Omega)$, $g \in H^{1/4,1/2} (\Sigma_{T'})$.
It can be extended to the general case using an argument of continuity.
\end{proof}

 \begin{remark}\label{re3.21}
 {\rm Proposition \ref{pr1.2} and Lemma \ref{le3.14} imply that, if $T, T' \in \R^+$, $v \in X_T$, $w \in X_{T'}$ and $v(T,\cdot) = w(0, \cdot)$, setting
 $$
 z(t,\cdot):= \left\{\begin{array}{lll}
 v(t, \cdot) & \mbox{Êif }Ê& t \in [0, T], \\ \\
 w(t-T,\cdot) & \mbox{Êif }Ê& t \in [T, T+T'],
 \end{array}
 \right.
 $$
 then $z \in X_{T+T'}$.
 }
 \end{remark}

  \begin{proposition}Ê\label{pr1.3}
  Assume (H1)-(H2) and let $T'>0$. Then there exists $C(T')>0$, such that, $\forall \, T \in (0, T']$, $\forall \, f \in H^{1/4,1/2}(Q_{T})'$,
  $\forall \, u_0 \in H^{1/2}(\Omega)$, $\forall \, g \in L^2(\Sigma_{T})$, it holds
  $$
  \|S_T(f,u_0, g)\|_{X_T} \leq C(T')\left(\|f\|_{H^{1/4,1/2}(Q_{T})'} + \|u_0\|_{H^{1/2}(\Omega)} + \|g\|_{L^2(\Sigma_{T})}\right).
  $$
  \end{proposition}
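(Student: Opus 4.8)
The plan is to reduce the estimate for an arbitrary $T\in(0,T']$ to a single estimate on the fixed interval $(0,T')$, the whole point being that the extension and restriction operations cost constants that are independent of $T$. For the fixed interval, $S_{T'}$ is a bounded map from $H^{1/4,1/2}(Q_{T'})'\times H^{1/2}(\Omega)\times L^2(\Sigma_{T'})$ into $X_{T'}$: it maps into $H^{3/4,3/2}(Q_{T'})$ by Theorem \ref{th1.4}, and the remaining three terms of the $X_{T'}$-norm \eqref{eq1.30} are controlled because $Bu$ is the boundary datum (Proposition \ref{pr1.1}(I)), because $u_{|\Sigma_{T'}}\in L^p(0,T';L^2(\Gamma))$ by the embedding recalled in Remark \ref{re1.3}, and because $Au=D_tu-f$ with $D_tu\in H^{-1/4}(0,T';L^2(\Omega))\hookrightarrow H^{1/4,1/2}(Q_{T'})'$, again by Remark \ref{re1.3}. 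Hence there is $C(T')>0$ with $\|S_{T'}(F,u_0,G)\|_{X_{T'}}\le C(T')\big(\|F\|_{H^{1/4,1/2}(Q_{T'})'}+\|u_0\|_{H^{1/2}(\Omega)}+\|G\|_{L^2(\Sigma_{T'})}\big)$.

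Next I would extend the data. Given $(f,u_0,g)$ on $(0,T)$, let $\widetilde f\in H^{1/4,1/2}(Q_{T'})'$ be the trivial extension defined in \eqref{eq1.7A} and let $\widetilde g\in L^2(\Sigma_{T'})$ be the extension of $g$ by zero, keeping $u_0$ unchanged. By Lemma \ref{le1.3}(II) one has $\|\widetilde f\|_{H^{1/4,1/2}(Q_{T'})'}\le\|f\|_{H^{1/4,1/2}(Q_T)'}$, while $\|\widetilde g\|_{L^2(\Sigma_{T'})}=\|g\|_{L^2(\Sigma_T)}$. Set $U:=S_{T'}(\widetilde f,u_0,\widetilde g)\in X_{T'}$. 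Since $\widetilde f_{|(0,T)}=f$ and $\widetilde g_{|\Sigma_T}=g$, Proposition \ref{pr1.2} identifies the restriction of $U$ to $Q_T$ with $u:=S_T(f,u_0,g)$.

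It then remains to bound $\|U_{|(0,T)}\|_{X_T}$ by $\|U\|_{X_{T'}}$ uniformly in $T$, treating the four terms of \eqref{eq1.30} separately. Restriction to the subinterval is a contraction for $\|\cdot\|_{H^{3/4,3/2}(Q_T)}$ and for $\|\cdot\|_{L^p(0,T;L^2(\Gamma))}$, so the first two terms are dominated by their $X_{T'}$-counterparts. For the boundary term, $B(U_{|(0,T)})=(BU)_{|\Sigma_T}$ and restriction of $L^2(\Sigma_{T'})$-functions is again a contraction. The crucial term is $\|A(U_{|(0,T)})\|_{H^{1/4,1/2}(Q_T)'}$: since $A(U_{|(0,T)})=(AU)_{|(0,T)}$ in the distributional sense, Lemma \ref{le1.3}(I) gives precisely $\|(AU)_{|(0,T)}\|_{H^{1/4,1/2}(Q_T)'}\le C\|AU\|_{H^{1/4,1/2}(Q_{T'})'}$ with $C$ independent of $T$ and $T'$. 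Combining, $\|u\|_{X_T}\le C_1\|U\|_{X_{T'}}$ with $C_1$ independent of $T$.

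Chaining the three estimates yields $\|S_T(f,u_0,g)\|_{X_T}\le C_1C(T')\big(\|f\|_{H^{1/4,1/2}(Q_T)'}+\|u_0\|_{H^{1/2}(\Omega)}+\|g\|_{L^2(\Sigma_T)}\big)$, and relabelling $C_1C(T')$ as $C(T')$ finishes the proof. The only genuine obstacle is the uniformity of the constant in the negative-norm term $\|Au\|_{H^{1/4,1/2}(Q_T)'}$: naive restriction estimates for dual spaces degenerate as $T\to0$, and it is exactly Lemma \ref{le1.3}(I) — resting on the scaling identity \eqref{eq1.2} and the Hardy-type bounds of Lemma \ref{le1.1} — that supplies a $T$-independent constant. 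The consistency identities $A(U_{|(0,T)})=(AU)_{|(0,T)}$ and $B(U_{|(0,T)})=(BU)_{|\Sigma_T}$ that legitimise the term-by-term comparison follow from $AU=D_tU-\widetilde f$ (Proposition \ref{pr1.1}(II)) together with the commutation of $D_t$ and of the differential operators with restriction of distributions.
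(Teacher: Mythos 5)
Your proof is correct and follows essentially the same route as the paper: extend $f$ and $g$ trivially to $(0,T')$, identify $S_T(f,u_0,g)$ with the restriction of $S_{T'}(\widetilde f,u_0,\widetilde g)$ via Proposition \ref{pr1.2}, and control the four terms of the $X_T$-norm by their $X_{T'}$-counterparts with $T$-independent constants, the only delicate one being $\|Au\|_{H^{1/4,1/2}(Q_T)'}$, handled exactly as in the paper by Lemma \ref{le1.3}(I). Your additional justification of the fixed-interval boundedness of $S_{T'}$ into $X_{T'}$ is a detail the paper leaves implicit, but it does not change the argument.
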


\begin{proof} Let $f \in H^{1/4,1/2}(Q_{T})'$, $u_0 \in H^{1/2}(\Omega)$, $g \in L^2(\Sigma_{T})$. We consider $\widetilde f \in H^{1/4,1/2}(Q_{T'})'$
defined in (\ref{eq1.7A}) (with $z$ replacing $f$) and the trivial extension $\widetilde g$ of $g$ to $\Sigma_{T'}$.
Observe that, by Proposition \ref{pr1.2}, $S_T(f,u_0,g)$ coincides with the restriction of $S_{T'}(\tilde f, u_0, \tilde g)$ to $Q_T$, so that
$$AS_T(f,u_0,g) = AS_{T'}(\tilde f, u_0, \tilde g)_{|(0, T)}.$$
Hence, by Lemma \ref{le1.3} (I)-(II), we deduce
 $$
 \begin{array}{c}
 \|S_T(f,u_0, g)\|_{X_T} = \|S_T(f,u_0, g)\|_{H^{3/4,3/2}(Q_T)} + \|S_T(f,u_0, g)_{|\Sigma_T}\|_{L^{p}(0, T; L^2(\Gamma))} \\ \\
 +  \|A S_T(f,u_0, g)\|_{H^{1/4,1/2}(Q_T)'} + \|g\|_{L^2(\Sigma_T)} \\ \\
 \leq \|S_{T'}(\widetilde f,u_0, \widetilde g)\|_{H^{3/4,3/2}(Q_{T'})} +  \|S_{T'}(\widetilde f,u_0, \widetilde g)_{|\Sigma_{T'}}\|_{L^{p}(0, T'; L^2(\Gamma))} \\ \\
 +  C_1  \|A S_{T'}(\widetilde f,u_0, \widetilde g)\|_{H^{1/4,1/2}(Q_{T'})'} + \|\widetilde g\|_{L^2(\Sigma_{T'})} \\ \\
 \leq C_1(T') \left (\|\widetilde f\|_{H^{1/4,1/2}(Q_{T'})'} + \|u_0\|_{H^{1/2}(\Omega)} + \|\widetilde g\|_{L^2(\Sigma_{T'})} \right) \\ \\
 \leq C_1(T') \left(\|f\|_{H^{1/4,1/2}(Q_{T})'}^2 + \|u_0\|_{H^{1/2}(\Omega)} + \||g\|_{L^2(\Sigma_{T})} \right).
\end{array}
 $$
\end{proof}

 \begin{proposition}Ê\label{pr1.4}
  Assume (H1)-(H2) and let $T' \in \R^+$. Then there exists $C(T') >0$ such that, $\forall \, T \in (0, T']$, $\forall \, f \in L^2(Q_{T})$,
  $\forall \, u_0 \in H^{1}(\Omega)$, $\forall \, g \in H^{1/4,1/2}(\Sigma_{T})$, it holds
  $$
  \|S_T(f,u_0, g)\|_{H^{1,2}(Q_T)} \leq C(T')(\|f\|_{L^2(Q_{T})} + \|u_0\|_{H^{1}(\Omega)} + \|g\|_{H^{1/4,1/2}(\Sigma_T)}.
  $$
  \end{proposition}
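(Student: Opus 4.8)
The plan is to mirror the proof of Proposition \ref{pr1.3}, reducing the estimate on the variable interval $(0,T)$ to a single fixed estimate on $(0,T')$ by a trivial-extension/restriction argument, so that all $T$-dependence enters only through bounds already known to be uniform. The ``primal'' counterparts of the tools used in Proposition \ref{pr1.3} are Theorem \ref{th1.3} (classical existence and uniqueness in $H^{1,2}$) in place of Theorem \ref{th1.4}, and Lemma \ref{le1.1} in place of Lemma \ref{le1.3}.

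First I fix $T \in (0,T']$ and data $f \in L^2(Q_T)$, $u_0 \in H^1(\Omega)$, $g \in H^{1/4,1/2}(\Sigma_T)$, and I extend $f$ and $g$ to $(0,T')$ by zero. The extension $\widetilde f \in L^2(Q_{T'})$ is an isometry, $\|\widetilde f\|_{L^2(Q_{T'})} = \|f\|_{L^2(Q_T)}$. The delicate point is the extension $\widetilde g$: it must still lie in $H^{1/4,1/2}(\Sigma_{T'})$ with norm controlled uniformly in $T,T'$. Since the time-regularity index $1/4$ is \emph{strictly below} $1/2$, the trivial extension of the $H^{1/4}(0,T;L^2(\Gamma))$-component is bounded by Lemma \ref{le1.1} (II) (applied with $\beta = 1/4$, $V = L^2(\Gamma)$), while the $L^2(0,T;H^{1/2}(\Gamma))$-component is extended isometrically; hence
$$\|\widetilde g\|_{H^{1/4,1/2}(\Sigma_{T'})} \leq C \|g\|_{H^{1/4,1/2}(\Sigma_T)}$$
with $C$ independent of $T,T'$. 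The data $u_0$ are left unchanged.

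Next I invoke Proposition \ref{pr1.2}. The extended triple $(\widetilde f, u_0, \widetilde g)$ lies in the classical data space $L^2(Q_{T'}) \times H^1(\Omega) \times H^{1/4,1/2}(\Sigma_{T'})$, so by Theorem \ref{th1.3} (equivalently Corollary \ref{co1.2}) we have $S_{T'}(\widetilde f, u_0, \widetilde g) \in H^{1,2}(Q_{T'})$, and Proposition \ref{pr1.2} gives that its restriction to $[0,T]$ equals $S_T(\widetilde f_{|(0,T)}, u_0, \widetilde g_{|\Sigma_T}) = S_T(f,u_0,g)$, because $\widetilde f_{|(0,T)} = f$ and $\widetilde g_{|\Sigma_T} = g$. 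For the \emph{fixed} time $T'$, the forward map $w \mapsto (D_t w - Aw,\, w(0,\cdot),\, Bw)$ is bounded from $H^{1,2}(Q_{T'})$ onto this product (the trace mappings are continuous by Theorem \ref{th1.1} (II)--(III)) and is a bijection by Theorem \ref{th1.3}; hence by the bounded inverse theorem $S_{T'}$ is bounded, with a constant $C(T')$ depending only on $T'$.

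Finally, restriction from $Q_{T'}$ to $Q_T$ is norm-nonincreasing on $H^{1,2}$, since each constituent integral defining the norm is then taken over a smaller time interval. Therefore
$$\|S_T(f,u_0,g)\|_{H^{1,2}(Q_T)} \leq \|S_{T'}(\widetilde f, u_0, \widetilde g)\|_{H^{1,2}(Q_{T'})} \leq C(T')\bigl(\|\widetilde f\|_{L^2(Q_{T'})} + \|u_0\|_{H^1(\Omega)} + \|\widetilde g\|_{H^{1/4,1/2}(\Sigma_{T'})}\bigr),$$
and combining this with the uniform extension bounds of the second paragraph yields the claim, with a final constant depending only on $T'$. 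The only genuinely delicate step is the uniform boundedness of the trivial extension of $g$: this is exactly where the inequality $1/4 < 1/2$ is essential, since it removes any zero-trace compatibility requirement and lets Lemma \ref{le1.1} apply directly.
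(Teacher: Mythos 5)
Your argument is correct and coincides with the paper's own proof: extend $f$ and $g$ trivially to $(0,T')$, bound $\|\widetilde g\|_{H^{1/4,1/2}(\Sigma_{T'})}$ uniformly via Lemma \ref{le1.1} (II), identify $S_T(f,u_0,g)$ with the restriction of $S_{T'}(\widetilde f,u_0,\widetilde g)$ to $Q_T$ by Proposition \ref{pr1.2}, and conclude with the fixed-$T'$ estimate of Theorem \ref{th1.3}. The only difference is cosmetic: you make explicit why $S_{T'}$ is bounded (bounded inverse theorem) and why the index $1/4<1/2$ is what allows the zero extension, points the paper leaves implicit.
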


 \begin{proof} We consider $\widetilde f \in L^2(Q_{T'})$, the extension of $f$ to $Q_{T'}$,
 and $\widetilde g$, the extension of $g$ to $\Sigma_{T'}$. Observe that, by Proposition \ref{pr1.2},
 $S_T(f,u_0,g)$ coincides with the restriction of $S_{T'}(\widetilde f, u_0, \widetilde g)$ to $Q_T$.
 So, by Lemma \ref{le1.1} (II), we have
 $$
 \begin{array}{c}
 \|S_T(f,u_0, g)\|_{H^{1,2}(Q_T)}   \leq \|S_{T'}(\widetilde f,u_0, \widetilde g)\|_{H^{1,2}(Q_{T'})}  \\ \\
 \leq C_1(T') \left(\|\widetilde f\|_{L^{2}(Q_{T'})} + \|u_0\|_{H^{1}(\Omega)} + \|\widetilde g\|_{H^{1/4,1/2}(\Sigma_{T'})}\right) \\ \\
 \leq C_1(T') \left(\|f\|_{L^{2}(Q_{T})}  + \|u_0\|_{H^{1/2}(\Omega)} + C\|g\|_{H^{1/4,1/2}(\Sigma_{T})} \right) \\ \\
  \leq C_2(T') \left(\|f\|_{L^{2}(Q_{T})}  + \|u_0\|_{H^{1/2}(\Omega)} + \|g\|_{H^{1/4,1/2}(\Sigma_{T})} \right).
\end{array}
 $$
\end{proof}

 \section{Convolution terms}\label{seA4}

 \setcounter{equation}{0}

In this section, we examine the convolution terms appearing in system (\ref{eqA2.15}).
Let us consider the following abstract situation. Given $h \in L^1(0, T)$, $f \in L^2(0, T; V)$,
with $V$ Hilbert space, we set
\begin{equation}
(h \ast f)(t) := \int_0^t h(t-s) f(s) ds.
\end{equation}
By Young's inequality, $h \ast f \in L^2(0, T; V)$ and
$$
\|h\ast f\|_{L^2(0, T; V)} \leq \|h\|_{L^1(0, T)}Ê\|f\|_{L^2(0, T; V)}.
$$
The first result says

\begin{lemma}\label{le1.6}
Let $V$ be a Hilbert space, $\beta \in (0, 1/2)$, $h \in L^1(0, T)$ and $f \in H^\beta(0, T; V)$. Then $h \ast f \in H^\beta(0, T; V)$.
Moreover, there exists $C >0$, independent of $T$, $h$, $f$, such that
$$
\|h \ast f\|_{H^\beta(0, T; V)} \leq C \|h\|_{ L^1(0, T)} \| f\|_{H^\beta(0, T; V)}.
$$
\end{lemma}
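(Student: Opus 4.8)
The statement to prove is Lemma~\ref{le1.6}: for $\beta\in(0,1/2)$, $h\in L^1(0,T)$ and $f\in H^\beta(0,T;V)$, the convolution $h\ast f$ lies in $H^\beta(0,T;V)$ with the estimate
$$
\|h \ast f\|_{H^\beta(0, T; V)} \leq C \|h\|_{ L^1(0, T)} \| f\|_{H^\beta(0, T; V)},
$$
$C$ independent of $T,h,f$. The plan is to use the norm $\|\cdot\|_{H^\beta(0,T;V)}$ introduced in (\ref{eq1.1}) for $0<\beta<1/2$, which has the two pieces: the weighted $L^2$ term $\int_0^T t^{-2\beta}\|(h\ast f)(t)\|_V^2\,dt$ and the Gagliardo seminorm $[\,h\ast f\,]^2$. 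I would estimate these two contributions separately, exploiting that convolution with a fixed-$t$ shift kernel is controlled by the time-shift operator whose boundedness on $H^\beta(0,T;V)$ is already recorded in Lemma~\ref{le1.1}(III).

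The cleanest route is to write $h\ast f$ as a vector-valued average of shifts of $f$. For $s\in(0,T)$, let $f_s$ be the truncated shift defined in (\ref{eq1.4}), namely $f_s(t)=f(t+s)$ for $t+s<T$ and $0$ otherwise; one checks directly that for $t\in(0,T)$,
$$
(h\ast f)(t)=\int_0^t h(t-s)f(s)\,ds=\int_0^T h(r)\,f_r(t)\,dr,
$$
after the substitution $r=t-s$ and noting that $f_r(t)=f(t+r)$ vanishes precisely when $t+r\ge T$, so the support conditions match. Thus $h\ast f=\int_0^T h(r)\,f_r\,dr$ is a Bochner integral in the Banach space $H^\beta(0,T;V)$, provided $r\mapsto f_r$ is suitably measurable and bounded into that space. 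But Lemma~\ref{le1.1}(III) gives exactly both facts: $s\mapsto f_s$ is continuous from $(0,T)$ into $H^\beta(0,T;V)$, hence strongly measurable, and $\|f_s\|_{H^\beta(0,T;V)}\le C\|f\|_{H^\beta(0,T;V)}$ with $C$ independent of $T,s,f$. Consequently the Bochner integral converges in $H^\beta(0,T;V)$, which proves $h\ast f\in H^\beta(0,T;V)$, and the triangle inequality for Bochner integrals yields
$$
\|h\ast f\|_{H^\beta(0,T;V)}\le\int_0^T|h(r)|\,\|f_r\|_{H^\beta(0,T;V)}\,dr\le C\,\|h\|_{L^1(0,T)}\,\|f\|_{H^\beta(0,T;V)},
$$
which is the desired estimate with the constant inherited from Lemma~\ref{le1.1}(III), hence independent of $T$.

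The only genuine obstacle is verifying the identity $(h\ast f)(t)=\int_0^T h(r)f_r(t)\,dr$ together with the measurability/integrability needed for the Bochner-integral formulation; this is where one must be careful that the truncation in the definition of $f_r$ is consistent with the upper limit $t$ in the convolution, and that the integrand $r\mapsto h(r)f_r$ is Bochner integrable (this follows since $h\in L^1$ and $r\mapsto f_r$ is continuous and uniformly bounded into $H^\beta(0,T;V)$ by Lemma~\ref{le1.1}(III)). Everything else—the convergence of the integral and the norm bound—is then immediate from the standard Bochner-integral inequality $\|\int g\|\le\int\|g\|$, so no direct manipulation of the Gagliardo double integral (\ref{eq1.1}) is required, and the $T$-independence of $C$ is automatic because the $T$-independent constant already appears in Lemma~\ref{le1.1}(III).
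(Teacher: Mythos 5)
There is a genuine gap: the representation formula at the heart of your argument is false. Substituting $r=t-s$ in $(h\ast f)(t)=\int_0^t h(t-s)f(s)\,ds$ gives $\int_0^t h(r)\,f(t-r)\,dr$, i.e.\ the convolution samples $f$ at the \emph{earlier} times $t-r$, whereas the truncated shift of (\ref{eq1.4}) is the \emph{forward} shift $f_r(t)=f(t+r)$. In fact $\int_0^T h(r)f_r(t)\,dr=\int_t^T h(\sigma-t)f(\sigma)\,d\sigma$, which is the anticausal (adjoint) convolution, not $h\ast f$; this is precisely why the paper uses $f_s$ in the proof of Lemma \ref{le1.7}, where the convolution is moved onto the \emph{test function} by duality. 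The support conditions do not match either: the convolution requires $r<t$, while $f_r(t)$ vanishes for $r\geq T-t$. Consequently Lemma \ref{le1.1}(III), which you invoke for measurability and for the uniform bound, is about the wrong operator and cannot be cited here.

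The Bochner-integral strategy itself is salvageable, but you must replace $f_r$ by the backward truncated shift $g_r(t):=f(t-r)$ for $t>r$ and $g_r(t):=0$ for $t\leq r$, so that $(h\ast f)(t)=\int_0^T h(r)\,g_r(t)\,dr$ holds, and then prove the analogue of Lemma \ref{le1.1}(III) for $r\mapsto g_r$: continuity into $H^\beta(0,T;V)$ and a bound $\|g_r\|_{H^\beta(0,T;V)}\leq C\|f\|_{H^\beta(0,T;V)}$ with $C$ independent of $T,r,f$. This does hold for $\beta\in(0,1/2)$ — the weighted term and the Gagliardo cross-term over $(0,r)\times(r,T)$ are both controlled by $\int_0^{T-r}\tau^{-2\beta}\|f(\tau)\|_V^2\,d\tau$, exactly as in the proof of Lemma \ref{le1.1}(I)--(II) — but it is a separate estimate that your write-up does not supply. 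With that lemma in hand your argument closes and is genuinely different from the paper's, which instead estimates the two pieces of the norm (\ref{eq1.1}) directly via Cauchy--Schwarz in the measure $|h(\tau)|\,d\tau$ and Fubini; the paper's route is more computational but self-contained, while yours, once repaired, packages the $T$-independence into a single shift-invariance statement.
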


\begin{proof} By Cauchy-Schwarz inequality, we get
$$
\|(h \ast f)(t)\|_V ^2 \leq \|h\|_{L^1(0, T)} \int_0^t |h(s)| \|f(t-s)\|_V ^2 ds, \quad \forall \, t \in (0,T).
$$
Then it follows
$$
\begin{array}{c}
\int_0^T t^{-2\beta} \|(h \ast f)(t)\|_V ^2 dt \leq \|h\|_{L^1(0, T)} \int_0^T t^{-2\beta} (\int_0^t |h(s)| \|f(t-s)\|_V ^2 ds) dt \\ \\
=  \|h\|_{L^1(0, T)} \int_0^T |h(s)| (\int_s^T t^{-2\beta} \|f(t-s)\|_V ^2 dt) ds \\ \\
\leq  \|h\|_{L^1(0, T)}^2 \int_0^T t^{-2\beta}  \|f(t)\|_V ^2 dt.
\end{array}
$$
Moreover, if $0 < s < t < T$,
$$
\begin{array}{c}
\|(h \ast f)(t) - ( h \ast f)(s)\|_V \leq \int_0^s |h(\tau)| \|f(t-\tau) - f(s-\tau)\|_V d\tau + \int_s^t |h(\tau)|  \|f(t-\tau)\|_V d\tau \\ \\
\leq \|h\|_{L^1(0, T)}^{1/2}Ê[(\int_0^s |h(\tau)| \|f(t-\tau) - f(s-\tau)\|_V^2 d\tau)^{1/2} + ÊÊ(\int_s^t |h(\tau)|  \|f(t-\tau)\|_V^2 d\tau)^{1/2}ÊÊÊÊ],
\end{array}
$$
so that
$$
\begin{array}{c}
\int_0^T (\int_0^t \frac{\|(h \ast f)(t) - ( h \ast f)(s)\|_V^2Ê}{(t-s)^{1+2\beta}} ds ) dt \\ \\
\leq 2 \|h\|_{L^1(0, T)} [Ê\int_0^T (\int_0^t (Ê\int_0^s |h(\tau)| \|f(t-\tau) - f(s-\tau)\|_V^2 d\tau) (t-s)^{-1-2\beta} ds ) dt \\ \\
+ \int_0^T (\int_0^t (\int_s^t |h(\tau)|  \|f(t-\tau)\|_V^2 d\tau) (t-s)^{-1-2\beta} ds ) dt ].
\end{array}
$$
We have
$$
\begin{array}{c}
\int_0^T (\int_0^t (Ê\int_0^s |h(\tau)| \|f(t-\tau) - f(s-\tau)\|_V^2 d\tau) (t-s)^{-1-2\beta} ds ) dt \\ \\
= \int_0^T  |h(\tau)| (\int_\tau^T (\int_\tau^t (t-s)^{-1-2\beta}  \|f(t-\tau) - f(s-\tau)\|_V^2 ds ) dt) d\tau \\ \\
\leq \|h\|_{L^1(0, T)}Ê\int_0^T (\int_0^t \frac{\|f(t) - f(s)\|_VÊÊ}{(t-s)^{1+2\beta}Ê} ds) dt.
\end{array}
$$
Finally, we deduce
$$
\begin{array}{c}
\int_0^T (\int_0^t (\int_s^t |h(\tau)|  \|f(t-\tau)\|_V^2 d\tau) (t-s)^{-1-2\beta} ds ) dt \\ \\
= \int_0^T (\int_0^t  |h(\tau)| \|f(t-\tau)\|_V^2 (\int_0^\tau (t-s)^{-1-2\beta} ds ) d\tau ) dt \\ \\
\leq \frac{1}{2\beta}Ê\int_0^T (\int_0^t  |h(\tau)| (t-\tau)^{-2\beta}Ê \|f(t-\tau)\|_V^2 d\tau ) dt \\ \\
\leq \frac{1}{2\beta} \|h\|_{L^1(0, T)} \int_0^T  t^{-2\beta}Ê \|f(t)\|_V^2 dt.
 \end{array}
$$
The conclusion follows.
\end{proof}

Now, having in mind the case $h \in L^1(0, T)$ and $f \in H^{1/4,1/2}(Q_T)'$, we want to define the convolution
$h \ast z$ when $h \in L^1(0, T)$ and $z \in Y_T'$ (cf. definition \ref{alfa}).

\begin{lemma}\label{le1.7}
There exists a unique bilinear and continuous mapping $(h,z) \to h \ast z$ from $L^1(0, T) \times Y_T'$ to $Y_T'$
extending the convolution mapping $(h,g) \to h \ast g$ from  $L^1(0, T) \times L^2(0, T; V_2)$ to $L^2(0, T; V_2)$. Moreover,
we can find a positive constant $C$, independent of $T$, $h$, $z$, such that
$$
\|h \ast z\|_{Y_T'} \leq C \|h\|_{L^1(0, T)} \|z\|_{Y_T'}.
$$
\end{lemma}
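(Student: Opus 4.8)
The plan is to define the convolution by duality, transferring the operation onto the test function and then controlling the resulting operator. For $g\in L^2(0,T;V_2)$ and $f\in Y_T$, Fubini's theorem together with the identity $h(t-s)(g(s),f(t))_{V_2}=(g(s),\overline{h(t-s)}f(t))_{V_2}$ gives
\[
\int_0^T\big((h\ast g)(t),f(t)\big)_{V_2}\,dt=\int_0^T\big(g(s),(K_hf)(s)\big)_{V_2}\,ds=(g,K_hf),
\]
where $(K_hf)(s):=\int_0^{T-s}\overline{h(\tau)}\,f(s+\tau)\,d\tau$. This computation is the blueprint: for an arbitrary $z\in Y_T'$ I would set $(h\ast z,f):=(z,K_hf)$ for $f\in Y_T$, and the whole statement then reduces to proving that $K_h$ is a bounded operator on $Y_T$ with $\|K_h\|_{\El(Y_T)}\le C\|h\|_{L^1(0,T)}$ for a constant $C$ independent of $T$, since this immediately yields $\|h\ast z\|_{Y_T'}\le\|z\|_{Y_T'}\,\|K_h\|_{\El(Y_T)}$.

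The key observation is that $K_h$ is a reflected convolution. Writing $R$ for the reflection $(Rf)(t):=f(T-t)$, the substitution $\tau\mapsto\tau$, $s\mapsto T-s$ shows that $(K_hf)(T-s)=(\overline{h}\ast Rf)(s)$, that is, $K_hf=R(\overline{h}\ast Rf)$. I would then bound the three factors separately, keeping all constants independent of $T$. The operator $R$ is an isometry of $L^2(0,T;V_1)$; on $H^\beta(0,T;V_2)$ its Gagliardo seminorm is reflection-invariant, while the weighted term transforms as $\int_0^T t^{-2\beta}\|(Rf)(t)\|_{V_2}^2\,dt=\int_0^T(T-t)^{-2\beta}\|f(t)\|_{V_2}^2\,dt$, which is bounded by $C\|f\|_{H^\beta(0,T;V_2)}^2$ through Lemma \ref{le1.1}(I), with a constant independent of $T$. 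Hence $R\in\El(Y_T)$ with a $T$-uniform norm. Convolution by $\overline h$ is bounded on $L^2(0,T;V_1)$ by Young's inequality and on $H^\beta(0,T;V_2)$ by Lemma \ref{le1.6}, in both cases by $C\|\overline h\|_{L^1(0,T)}=C\|h\|_{L^1(0,T)}$ uniformly in $T$, so it is bounded on $Y_T$ by $C\|h\|_{L^1(0,T)}$. Composing the three bounds gives the required estimate for $K_h$.

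With this estimate in hand, $(h\ast z,f):=(z,K_hf)$ defines a continuous antilinear functional on $Y_T$, so $h\ast z\in Y_T'$ and the norm bound holds. Bilinearity follows formally: linearity in $z$ is clear, while linearity in $h$ holds because $K_h$ is antilinear in $h$ and $z$ is an antilinear functional, so the two conjugations cancel. The displayed identity of the first paragraph shows that when $z=g\in L^2(0,T;V_2)$ the functional $h\ast z$ coincides with the element of $Y_T'$ associated to the ordinary convolution $h\ast g$, so the new map extends the classical one. Finally, uniqueness is a density argument: since $L^2(0,T;V_2)$ is dense in $Y_T'$ by Lemma \ref{le1.2}, for each fixed $h$ any two continuous extensions of the convolution agree on a dense subspace and hence coincide.

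The delicate point, and the only place where genuine work is needed, is the $T$-uniform boundedness of $K_h$ on $Y_T$: the $H^\beta$ norm in Definition \ref{alfa} carries the non-symmetric weight $t^{-2\beta}$, which reflection turns into $(T-t)^{-2\beta}$, and it is precisely Lemma \ref{le1.1}(I) that keeps the constant independent of $T$. Everything else is routine duality and density.
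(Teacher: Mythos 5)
Your proof is correct, and it rests on the same underlying idea as the paper's: define $h\ast z$ by moving the convolution onto the test function, then use density of $L^2(0,T;V_2)$ in $Y_T'$ (Lemma \ref{le1.2}) for uniqueness and for the extension property. The difference lies in how the key estimate is obtained. The paper writes the pairing as the scalar integral $(h\ast z,f)=\int_0^T h(s)\,(z,f_s)\,ds$, where $f_s$ is the truncated translate of (\ref{eq1.4}); it then only needs $s\mapsto(z,f_s)$ to be continuous and bounded by $C\|z\|_{Y_T'}\|f\|_{Y_T}$, which is exactly Lemma \ref{le1.1} (III). In particular it never has to realize $\int_0^T\overline{h(s)}f_s\,ds$ as an element of $Y_T$. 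You instead package that integral into the operator $K_h$, factor it as $K_h=R\circ(\overline h\ast\cdot)\circ R$ with $R$ the time reflection, and bound the three factors on $Y_T$ separately: Young's inequality on $L^2(0,T;V_1)$, Lemma \ref{le1.6} on $H^\beta(0,T;V_2)$, and Lemma \ref{le1.1} (I) to absorb the weight $(T-t)^{-2\beta}$ produced by the reflection. Both routes give a constant independent of $T$; yours is slightly heavier (it needs the $T$-uniform boundedness of convolution on $H^\beta$ from Lemma \ref{le1.6}, which the paper's proof of this lemma does not invoke), but it has the merit of exhibiting the adjoint of the convolution explicitly as a reflected convolution, and it correctly isolates the reflection of the non-symmetric weight as the only delicate point. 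Your remarks on bilinearity in $h$ (the two conjugations cancelling against the antilinearity of $z$) and on the extension property are accurate.
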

\begin{proof} The uniqueness of the extension of the convolution follows from Lemma \ref{le1.2}.  If $h \in L^1(0, T)$, $g \in L^2(0, T; V_2)$
and $f \in Y_T$, we have
$$
\begin{array}{c}
(h \ast g, f) =  \int_0^T ((h \ast g)(t), f(t))_{V_2} dt = \int_0^T h(s) (\int_0^{T-s} (g(\tau), f(\tau +s))_{V_2} d\tau) ds \\ \\
= \int_0^T h(s) (\int_0^{T} (g(\tau), f_s(\tau))_{V_2} d\tau) ds = \int_0^T h(s) (g, f_s) ds,
\end{array}
$$
with $f_s$ as in (\ref{eq1.4}). So, if $z \in Y_T'$ and $f \in Y_T$, we define
\begin{equation}\label{eq1.8}
(h \ast z, f):= \int_0^T h(s) (z, f_s) ds.
\end{equation}
Observe that (\ref{eq1.8}) is well defined, because, in force of Lemma \ref{le1.1} (III), the mapping $s \to (z, f_s)$ is continuous and bounded.
We have also
$$
|(h \ast z, f)| \leq \int_0^T |h(s)| |(z, f_s)| ds \leq C \|h\|_{L^1(0, T)}Ê\|z\|_{Y_T'}Ê\|f\|_{Y_T},
$$
with $C$ independent of $T$, $h$, $z$, $f$.
The conclusion follows.
\end{proof}

\begin{lemma} \label{le1.8}
Let $0 < T < T'$, $h \in L^1(0, T')$, $z \in Y_{T'}'$. We set $z_0:= z_{|(0, T)}$, $z_1:= z_{|(T, T')}$, $h_0:= h_{||0, T)}$, $h_1:= h_{|(T, T')}$.
Then, recalling the notation (\ref{eq1.12}), there hold

(I) \quad $(h\ast z)_{|(0, T)} = h_0 \ast z_0$;
 \vskip0.1truecm
(II)\quad
$(h\ast z)_{|(T, T')}(\cdot + T) = (h \ast {\widetilde z}_0)_{|(T, T')}(\cdot + T) + h_{|(0, T'-T)} \ast [z_1(\cdot + T)] $
 \vskip0.1truecm
$\quad \quad \quad = ({\widetilde h}_0 \ast z)_{|(T, T')} (\cdot + T) + h_1(\cdot + T) \ast z_{|(0, T'-T)};$
 \vskip0.1truecm

(III) if $T' \leq 2T$
 \vskip0.1truecm
$\quad \quad \quad
(h\ast z)_{|(T, T')}(\cdot + T)
= ({\widetilde h}_0 \ast {\widetilde z}_0 )_{|(T, T')}(\cdot + T) + h_1(\cdot + T) \ast z_{0|(0, T-T')}Ê+ h_{0|(0, T'-T)} \ast [z_1(\cdot + T)].
$
\end{lemma}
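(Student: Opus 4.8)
The plan is to reduce every identity to the ordinary convolution of $L^2$-valued functions and then pass to the general case by density. By Lemma \ref{le1.2}, $L^2(0, T'; V_2)$ is dense in $Y_{T'}'$, and by Lemma \ref{le1.7} the map $z \mapsto h \ast z$ is continuous from $Y_{T'}'$ into itself for fixed $h \in L^1(0, T')$. Together with the continuity of the restriction and trivial-extension operators (Lemma \ref{le1.3} (I)--(II)) and the isometry $v \mapsto v(\cdot + T)$ of (\ref{eq1.12}), this shows that both sides of (I)--(III), regarded as maps of $z$, are continuous linear operators into the relevant dual spaces. Hence it suffices to verify the three identities for $z = g \in L^2(0, T'; V_2)$, where $h \ast g$ is the classical convolution and all restrictions, extensions and shifts become their concrete $L^2$-counterparts.

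For (I), fix $g$ and $t \in (0, T)$. Then $(h \ast g)(t) = \int_0^t h(t-s) g(s)\, ds$ involves only the values of $h$ and $g$ on $(0, t) \subset (0, T)$, so it equals $(h_0 \ast g_0)(t)$, which is the claim. For (II), I would use Lemma \ref{le3.14} and (\ref{eq1.13}) to write $z = \widetilde{z}_0 + \zeta$, with $\zeta$ the trivial extension of $z_1$ by zero on $(0, T)$, and split $h \ast z = h \ast \widetilde{z}_0 + h \ast \zeta$ by bilinearity. The first summand produces the leading term of the first line. For the second, with $g$ in place of $z$ and $w$ its extension (so $w = 0$ on $(0,T)$, $w = g$ on $(T, T')$), the change of variables $\tau = t - T$, $\sigma = s - T$ in $(h \ast w)(t) = \int_T^t h(t-s) g(s)\, ds$ gives $\int_0^\tau h(\tau - \sigma)\, g(\sigma + T)\, d\sigma$; since $\tau - \sigma$ ranges in $(0, T'-T)$, this is $\big(h_{|(0, T'-T)} \ast [z_1(\cdot + T)]\big)(\tau)$, the first line of (II). The second line follows symmetrically by splitting $h = \widetilde{h}_0 + \eta$ instead, with $\eta$ the extension of $h_1$: the analogous computation, now forcing $s < t - T$, yields $h_1(\cdot + T) \ast z_{|(0, T'-T)}$.

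Finally, for (III) I would split both factors at once, $h \ast z = \widetilde{h}_0 \ast \widetilde{z}_0 + \widetilde{h}_0 \ast \zeta + \eta \ast \widetilde{z}_0 + \eta \ast \zeta$. The decisive point is that $\eta \ast \zeta$, being the convolution of two functions supported in $(T, T')$, is supported in $(2T, 2T')$ and hence vanishes on $(T, T')$ exactly when $T' \leq 2T$; this is where the hypothesis enters. The two mixed terms are treated as in (II): restricting to $(T, T')$ and shifting by $T$, the summand $\widetilde{h}_0 \ast \zeta$ gives $h_{0|(0, T'-T)} \ast [z_1(\cdot + T)]$ and $\eta \ast \widetilde{z}_0$ gives $h_1(\cdot + T) \ast z_{0|(0, T'-T)}$, where $T' - T \leq T$ is used to identify the restrictions of $\widetilde{h}_0$ and $\widetilde{z}_0$ with those of $h_0$ and $z_0$. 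Adding the surviving terms gives (III).

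I expect the genuine analytic content to be light: it is just Fubini and changes of variables for ordinary convolutions, valid for $g \in L^2$. The main obstacle is bookkeeping, namely keeping precise track of the restriction, trivial-extension and translation operators in the dual spaces $Y'$ as defined in (\ref{eq1.6}), (\ref{eq1.7A}), (\ref{eq1.11A}) and (\ref{eq1.12}), and verifying at each stage that the identity, once established on the dense subspace $L^2(0, T'; V_2)$, is an equality of continuous maps, so that the density argument of the first paragraph legitimately transfers it to all $z \in Y_{T'}'$.
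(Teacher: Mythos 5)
Your proposal is correct and follows essentially the same route as the paper: reduce to $z\in L^2(0,T';V_2)$ by the density of Lemma \ref{le1.2} (together with the continuity of convolution, restriction, extension and shift), then verify the identities by splitting off the trivial extensions and changing variables in the resulting integrals. The only cosmetic difference is in (III), where you expand $h\ast z$ into four terms and observe that $\eta\ast\zeta$ is supported in $(2T,2T')$, while the paper obtains (III) by further decomposing $h=\widetilde h_0+\eta$ inside the first identity of (II); the underlying computation is the same.
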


\begin{proof} By Lemma \ref{le1.2}, it suffices to consider $z \in L^2(0, T'; V_2)$. In this case, (I) is obvious.
Concerning (II), we have, for $t \in (0, T' - T)$,
$$
\begin{array}{c}
(h \ast z)(T+t) = \int_0^{T+t}Êh(T+t-s)  z(s) ds \\ \\
= \int_0^{T+t} h(T+t-s)  {\widetilde z}_0(s) ds + \int_0^{t} h(t-s)  z_1(s+T) ds
\end{array}
$$
and the first identity in (II) is proved. The second identity follows inverting the roles of $h$ and $z$, as
$$
(h \ast z)(T+t) = \int_0^{T+t}Êh(s)  z(T+t-s) ds.
$$
We have also
$$
\int_0^{T+t} h(T+t-s){\widetilde z}_0(s) ds = \int_0^{T+t}{\widetilde h}_0(T+t-s){\widetilde z}_0(s) ds + \int_0^t h_1(T+t-s){\widetilde z}_0(s)ds,
$$
which implies (III) and completes the proof.
\end{proof}

\begin{remark}\label{re4.4}
{Ê\rm If we think of the final formula in (III) as a function of $(z_1,h_1)$, we see that it is affine, in spite of the fact that the convolution,
as a function of $(h,z)$,  does not enjoy this property. This will be crucial to prove global existence for solution to Problem 3 (see Section 5).}
\end{remark}

\section{The memory term }\label{seA5}

\setcounter{equation}{0}

In this section we study some properties of the term $\Psi(v)$ defined in (\ref{eqA2.12}).
We recall that the (usually nonlinear) operator $\mW$ fulfills the conditions (C1)-(C3).
Let $v \in H^{3/4,3/2}(Q_T)$. We have
$$
\mM(u_0 + 1 \ast v)(t) = \mM u_0 + (1 \ast \mM v)(t), \quad t \in (0, T).
$$
On account of (H8), observe that the function $\int_\Omega \omega_1(x) v(\cdot,x) dx \in H^{3/4}(0, T)$.
Moreover, as $v_{|\Sigma_T} \in H^{1/2,1}(\Sigma_T)$, then $\int_\Omega \omega_2(y) v(\cdot,y) d\sigma \in H^{1/2}(0, T)$.
So,  $\mM(u_0 + 1 \ast v) \in H^{3/2}(0, T) \hookrightarrow C([0, T]) \cap BV([0, T])$ and $\Psi(v)$ is well defined.
More in general, taking  $v \in H^{3/4,3/2}(Q_\tau)$ with $0 < \tau \leq T$, we define
\begin{equation}
\Psi(v)(t,y):= D_t[\mF (\mW (\mM(u_0(x) + 1 \ast v(t,x)))](t,y), \quad (t,y) \in \Sigma_\tau.
\end{equation}
We observe that, if $v_1,v_2 \in H^{3/4,3/2}(Q_\tau)$ and $v_{1|Q_{\tau_1}} = v_{2|Q_{\tau_1}}$  for some $\tau_1 \in [0, \tau]$,
then
$$
\Psi(v_1)(t,y) = \Psi(v_2)(t,y) \quad \mbox{ in }Ê\Sigma_{\tau_1}.
$$
Further, recalling  definition (\ref{eqA1.11}), we have
\begin{equation}
\Psi(v)(t,y) = D_t(E_1 \ast r_v)(t) u_A(t,y) + (E_1 \ast r_v)(t) D_tu_A(t,y) + D_tE_0(t,y), \quad (t,y) \in \Sigma_\tau,
\end{equation}
with
\begin{equation}
r_v(t):= \mW (\mM u_0 + 1 \ast \mM v)(t).
\end{equation}
The following result holds
\begin{lemma}\label{le2.1}
Let $v_1, v_2 \in H^{3/4,3/2}(Q_\tau)$, with $0 < \tau \leq T$. Then
$$
\|\Psi(v_1) - \Psi(v_2)\|_{L^2(\Sigma_\tau)} \leq C(T) \tau^{1/2} \|v_1 - v_2\|_{H^{3/4,3/2} (Q_\tau)}.
$$
\end{lemma}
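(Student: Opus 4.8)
\emph{The plan} is to isolate the only $v$-dependent part of $\Psi$ and to locate the gain $\tau^{1/2}$ in the inner smoothing operator $1\ast(\cdot)$. Since $D_tE_0$ is independent of $v$, it cancels in the difference, and writing $g:=r_{v_1}-r_{v_2}$ and $w:=v_1-v_2$ one has $\Psi(v_1)-\Psi(v_2)=D_t(E_1\ast g)\,u_A+(E_1\ast g)\,D_tu_A$ on $\Sigma_\tau$. Thus I reduce the whole estimate to controlling $g$ in $C([0,\tau])$ and then propagating it through the two elementary operators $E_1\ast(\cdot)$ and $D_t(E_1\ast(\cdot))$ into $L^2(\Sigma_\tau)$.

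\emph{Step 1 (control of $g$).} By linearity of $\mM$ and condition (C3), $\|g\|_{C([0,\tau])}\le L\,\|1\ast\mM w\|_{C([0,\tau])}$. The crucial point is that $1\ast$ gains a power of $\tau$: from $(1\ast\phi)(t)=\int_0^t\phi(s)\,ds$ and Cauchy--Schwarz in time, $\|1\ast\phi\|_{C([0,\tau])}\le\tau^{1/2}\|\phi\|_{L^2(0,\tau)}$, whence $\|g\|_{C([0,\tau])}\le L\,\tau^{1/2}\|\mM w\|_{L^2(0,\tau)}$. It remains to bound $\|\mM w\|_{L^2(0,\tau)}$ by $C(T)\|w\|_{H^{3/4,3/2}(Q_\tau)}$: the volume part is handled by Cauchy--Schwarz, $\le\|\omega_1\|_{L^2(\Omega)}\|w\|_{L^2(Q_\tau)}$, and the boundary part by $\le\|\omega_2\|_{L^2(\Gamma)}\|w_{|\Sigma_\tau}\|_{L^2(\Sigma_\tau)}$, using that the trace is continuous from $H^{3/4,3/2}(Q_\tau)$ into $H^{1/2,1}(\Sigma_\tau)\hookrightarrow L^2(\Sigma_\tau)$ (Theorem \ref{th1.1}(II)). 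This gives $\|g\|_{C([0,\tau])}\le C(T)\,\tau^{1/2}\,\|w\|_{H^{3/4,3/2}(Q_\tau)}$.

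\emph{Step 2 (propagation into $L^2(\Sigma_\tau)$).} I use (H8): $u_A\in H^{5/4}(0,T;L^2(\Gamma))\hookrightarrow C([0,T];L^2(\Gamma))$, so $\sup_t\|u_A(t,\cdot)\|_{L^2(\Gamma)}\le C(T)$, while $D_tu_A\in L^2(0,T;L^2(\Gamma))$. Since $E_1(0)=\epsilon^{-1}$ and $E_1'=-\epsilon^{-1}E_1$, one has $D_t(E_1\ast g)=\epsilon^{-1}(g-E_1\ast g)$, and $\|E_1\|_{L^1(0,\tau)}\le1$; hence $\|E_1\ast g\|_{C([0,\tau])}\le\|g\|_{C([0,\tau])}$ and $\|D_t(E_1\ast g)\|_{L^2(0,\tau)}\le C\,\|g\|_{L^2(0,\tau)}\le C\,\tau^{1/2}\|g\|_{C([0,\tau])}$. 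Estimating the two summands separately, the first is bounded in $L^2(\Sigma_\tau)$ by $\|u_A\|_{C([0,T];L^2(\Gamma))}\,\|D_t(E_1\ast g)\|_{L^2(0,\tau)}$ and the second by $\|E_1\ast g\|_{C([0,\tau])}\,\|D_tu_A\|_{L^2(0,T;L^2(\Gamma))}$. Combining with Step 1, the second summand yields exactly $C(T)\tau^{1/2}\|w\|_{H^{3/4,3/2}(Q_\tau)}$, while the first even produces a factor $\tau$, absorbed using $\tau\le T$. This is the asserted inequality.

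\emph{Main obstacle.} The substance is the $\tau^{1/2}$ of Step 1; the nonlinearity of $\mW$ enters only through the Lipschitz bound (C3) in the sup-norm, which is why all estimates for $g$ are carried out in $C([0,\tau])$. The delicate technical point is to keep every embedding and trace constant \emph{uniform} in $\tau\in(0,T]$, so that $C(T)$ does not degenerate as $\tau\to0$; this is secured by the scaling/extension machinery established earlier (cf. Lemma \ref{le1.1} and Propositions \ref{pr1.3}--\ref{pr1.4}), reducing trace bounds on $Q_\tau$ to fixed bounds on $Q_T$.
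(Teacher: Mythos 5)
Your proof is correct and follows essentially the same route as the paper's: reduce to the scalar quantity $r_{v_1}-r_{v_2}$, extract the factor $\tau^{1/2}$ from $\|1\ast\mM w\|_{C([0,\tau])}\le\tau^{1/2}\|\mM w\|_{L^2(0,\tau)}$ via (C3), and then propagate through $E_1\ast(\cdot)$ and its derivative into $L^2(\Sigma_\tau)$. The only (immaterial) difference is in the final H\"older splitting, where you place the sup in time on $u_A$ and the $L^2(0,\tau)$ norm on the scalar factor, while the paper does the reverse, taking $\|D_t(E_1\ast(r_{v_1}-r_{v_2}))\|_{C([0,\tau])}\|u_A\|_{L^2(\Sigma_\tau)}$; both are covered by (H8).
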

\begin{proof} First, we estimate $\| D_t[E_1 \ast (r_{v_1} - r_{v_2})]\|_{C([0, \tau])}$. Employing Lemma \ref{le1.1A}, we have
$$
\begin{array}{c}
\|D_t[E_1 \ast (r_{v_1} - r_{v_2})]\|_{C([0, \tau])} \leq C_1(T) \|r_{v_1} - r_{v_2}\|_{C([0, \tau])} \\ \\
\leq C_1(T) L \|1 \ast  \mM (v_1 - v_2)\|_{C([0, \tau])} \leq C_1(T) L \tau^{1/2} \| \mM (v_1 - v_2)\|_{L^2(0, \tau)} \\ \\
\leq  C_1(T) L \tau^{1/2} \left(\|\omega_1\|_{L^2(\Omega)} \|v_1 - v_2\|_{L^2(Q_\tau)}
+ \|\omega_2\|_{L^2(\Gamma)}Ê\|(v_1 - v_2)_{|\Sigma_\tau}\|_{L^2(\Sigma_\tau)}\right) \\ \\
\leq C_2(T) L \tau^{1/2} \left(\|\omega_1\|_{L^2(\Omega)} \tau^{1/2} \|v_1 - v_2\|_{H^{3/4}((0, \tau); L^2(\Omega))}
+ \|\omega_2\|_{L^2(\Gamma)}Ê\|v_1 - v_2\|_{L^2((0, \tau); H^{3/2}(\Omega))}\right) \\ \\
\leq C_3(T) \tau^{1/2} \|v_1 - v_2\|_{H^{3/4,3/2} (Q_\tau)}.
\end{array}
$$
It follows
$$
\begin{array}{c}
\|E_1 \ast (r_{v_1} - r_{v_2})]\|_{C([0, \tau])} = \|1 \ast D_t[E_1 \ast (r_{v_1} - r_{v_2})]\|_{C([0, \tau])} \\ \\
\leq \tau \|D_t[E_1 \ast (r_{v_1} - r_{v_2})]\|_{C([0, \tau])} \leq C_3(T) \tau^{3/2} \|v_1 - v_2\|_{H^{3/4,3/2} (Q_\tau)}.
\end{array}
$$
So we have
$$
\begin{array}{c}
\|\Psi(v_1) - \Psi(v_2)\|_{L^2(\Sigma_\tau)}  \leq
\|[ÊD_t(E_1 \ast (r_{v_1} - r_{v_2}))](t)  u_A(t,y)\| _{L^2(\Sigma_\tau)}  \\ \\
+ \|[E_1 \ast (r_{v_1} - r_{v_2})] (t) D_tu_A(t,y) \| _{L^2(\Sigma_\tau)} \\ \\
\leq \|[ÊD_t(E_1 \ast (r_{v_1} - r_{v_2}))]\|_{C([0, \tau])}Ê\|u_A\| _{L^2(\Sigma_\tau)}
+  \|[E_1 \ast (r_{v_1} - r_{v_2})]\|_{C([0, \tau])} \|D_t u_A\| _{L^2(\Sigma_\tau)} \\ \\
\leq C_4(T) \tau^{1/2} \|v_1 - v_2\|_{H^{3/4,3/2} (Q_\tau)}.
\end{array}
$$
\end{proof}
Let us consider now $\delta >0$ such that $0 < \tau < \tau + \delta \leq T$. We fix $w \in X_\tau$ (recall definition (\ref{de3.1})).
If $v \in  X_\delta$ and $v(0, \cdot) = w(\tau, \cdot)$, we set
\begin{equation}\label{eq2.7}
V(t,\cdot) = \left\{\begin{array}{ll}
w(t, \cdot) & \mbox{ if } t \in [0, \tau], \\ \\
v(t-\tau, \cdot) & \mbox{ if } t \in [\tau, \tau + \delta]
\end{array}
\right.
\end{equation}
and define
\begin{equation}
\Psi(w,v)(t,y):= \Psi(V)(\tau+t,y), \quad t \in (0, \delta), \, y \in  \Gamma.
\end{equation}
On account of Remark \ref{re3.21}, we observe that  $V \in X_{\tau +\delta}$. Moreover it holds

\begin{corollary}\label{co2.1}
Let $v_1, v_2 \in X_\delta$ be such that $v_1(0, \cdot) = v_2(0,\cdot) = w(\tau, \cdot)$. Then
$$
\|\Psi(w,v_1) - \Psi(w,v_2)\|_{L^2(\Sigma_\delta)}Ê\leq C\delta^{1/2}Ê\|v_1 - v_2\|_{H^{3/4,3/2}(Q_\delta)},
$$
with $C$ independent of $\tau$ in $[0, T)$, $\delta \in (0, T-\tau]$, $w, v_1, v_2$.
\end{corollary}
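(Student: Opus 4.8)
The plan is to reduce the statement to Lemma \ref{le2.1} by exploiting the causality (locality in time) of the operator $\Psi$, while carefully tracking the time support of the difference $V_1-V_2$ so as to recover the factor $\delta^{1/2}$ rather than $(\tau+\delta)^{1/2}$. Concretely, for $i=1,2$ I would introduce the concatenated functions $V_i$ defined as in (\ref{eq2.7}), so that $V_i$ equals $w$ on $[0,\tau]$ and $v_i(\cdot-\tau)$ on $[\tau,\tau+\delta]$. By Remark \ref{re3.21} each $V_i$ belongs to $X_{\tau+\delta}$, and by definition $\Psi(w,v_i)(t,\cdot)=\Psi(V_i)(\tau+t,\cdot)$ for $t\in(0,\delta)$. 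Since $v_1(0,\cdot)=v_2(0,\cdot)=w(\tau,\cdot)$, both $V_1$ and $V_2$ coincide with $w$ on $Q_\tau$, so the locality property of $\Psi$ recorded just before Lemma \ref{le2.1} gives $\Psi(V_1)=\Psi(V_2)$ on $\Sigma_\tau$. Hence, after the change of variable $s=\tau+t$,
$$
\|\Psi(w,v_1)-\Psi(w,v_2)\|_{L^2(\Sigma_\delta)}^2 = \int_\tau^{\tau+\delta}\int_\Gamma |\Psi(V_1)(s,y)-\Psi(V_2)(s,y)|^2\,d\sigma\,ds = \|\Psi(V_1)-\Psi(V_2)\|_{L^2(\Sigma_{\tau+\delta})}^2,
$$
the last equality holding because the integrand vanishes on $\Sigma_\tau$.

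Next I would re-run the chain of estimates in the proof of Lemma \ref{le2.1} for the pair $V_1,V_2$ on the interval $[0,\tau+\delta]$, the crucial point being that $\mM(V_1)-\mM(V_2)=\mM(V_1-V_2)$ vanishes on $[0,\tau]$ and equals $\mM(v_1-v_2)(\cdot-\tau)$ on $[\tau,\tau+\delta]$. Consequently the primitive $1\ast\mM(V_1-V_2)$ vanishes on $[0,\tau]$ and, being a time-translate of $1\ast\mM(v_1-v_2)$ there, satisfies $\|1\ast\mM(V_1-V_2)\|_{C([0,\tau+\delta])}=\|1\ast\mM(v_1-v_2)\|_{C([0,\delta])}\leq \delta^{1/2}\|\mM(v_1-v_2)\|_{L^2(0,\delta)}$ by Cauchy--Schwarz. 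Feeding this into the Lipschitz bound (C3) for $\mW$ (so that $\|r_{V_1}-r_{V_2}\|_{C([0,\tau+\delta])}\leq L\|1\ast\mM(V_1-V_2)\|_{C([0,\tau+\delta])}$, the $\mM u_0$ contributions cancelling), then invoking Lemma \ref{le1.1A} to control $E_1\ast(\cdot)$ and $D_t[E_1\ast(\cdot)]$ and the trace and embedding bounds for $\mM$ from (H8), exactly as in Lemma \ref{le2.1}, yields the claimed inequality with $\delta^{1/2}$ in place of $(\tau+\delta)^{1/2}$ and with $\|v_1-v_2\|_{H^{3/4,3/2}(Q_\delta)}$, since the restriction of $V_1-V_2$ to $[\tau,\tau+\delta]$ is the time-translate of $v_1-v_2$.

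The only place where genuine care is required, and hence the main obstacle, is precisely this bookkeeping of the power of $\delta$ together with the uniformity of the constant $C$ in $\tau$ and $\delta$. A blind application of Lemma \ref{le2.1} on $[0,\tau+\delta]$ would produce the wrong factor $(\tau+\delta)^{1/2}$ and a constant that degenerates as $\tau$ grows; it is the localization of the time support of $V_1-V_2$ to the interval $[\tau,\tau+\delta]$ of length $\delta$ — so that every primitive (convolution with $1$) accumulates over a window of length only $\delta$ — that simultaneously restores the correct $\delta^{1/2}$ homogeneity and makes $C$ depend only on $T$ (through $u_A$, $\omega_1$, $\omega_2$, the Lipschitz constant $L$ and $\epsilon$), hence independent of $\tau\in[0,T)$, $\delta\in(0,T-\tau]$ and of $w,v_1,v_2$.
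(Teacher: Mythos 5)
Your argument is correct and is essentially the paper's own proof: both introduce the concatenated functions $V_j$ from (\ref{eq2.7}), observe that $\mM(u_0+1\ast V_1)-\mM(u_0+1\ast V_2)$ vanishes on $[0,\tau]$ and is the time-translate of $1\ast\mM(v_1-v_2)$ on $[\tau,\tau+\delta]$, so that (C3) yields $\|r_{V_1}-r_{V_2}\|_{C([0,\tau+\delta])}\leq L\|1\ast\mM(v_1-v_2)\|_{C([0,\delta])}\leq L\,\delta^{1/2}\|\mM(v_1-v_2)\|_{L^2(0,\delta)}$, after which the chain of estimates from Lemma \ref{le2.1} gives the claimed bound with the correct $\delta^{1/2}$ and a constant depending only on $T$.
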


\begin{proof} Indicating by $V_j$ the function obtained replacing $v$ with $v_j$ ($j \in \{1,2\}$) in (\ref{eq2.7}),
we have
$$
\mM(u_0 + 1\ast V_j)(t) = \left\{\begin{array}{ll}
\mM u_0 + 1 \ast \mM(w)(t) & \mbox{ if  } t \in [0, \tau], \\ \\
\mM u_0 + 1 \ast \mM(w)(\tau) + 1 \ast \mM(v_j)(t-\tau) & \mbox{ if  } t \in [\tau, \tau + \delta].
\end{array}
\right.
$$
So
$$
\|\mW(\mM(u_0 + 1\ast V_1)) - \mW(\mM(u_0 + 1\ast V_1))\|_{C([0, \tau+\delta])} \leq L \|1 \ast \mM(v_1- v_2)\|_{C([0, \delta])},
$$
and the conclusion follows as in the proof of Lemma \ref{le2.1}.
\end{proof}

\section{Weak solutions to Problem \ref{P3}}\label{se6}

\setcounter{equation}{0}

In this section we begin to study the inverse problem reformulated as Problem \ref{P3}. Here we shall limit ourselves to consider
weak solutions, in the sense that we shall not search for a solution $(v,h)\in H^{1,2}(Q_T) \times L^2(0, T)$, but rather
a solution $(v,h)\in X_T \times L^2(0, T)$,
with $X_T$ defined as in  \ref{de3.1}. So, we are going to consider system (\ref{eqA2.6}), with the following (generalized) assumptions:

\medskip

{\it (K1) (H1)-(H2) are fulfilled;

\medskip

(K2) $v^* \in H^{1/4,1/2}(Q_T)'$;

\medskip

(K3) $\psi_1, z_0 \in L^2(\Omega)$;

\medskip

(K4) $v_0 \in H^{1/2}(\Omega)$;

\medskip

(K5) $z_1 \in L^2(\Gamma)$, $v^*_\Gamma \in L^2(\Sigma_T)$;

\medskip

(K6) $h^* \in L^2((0, T))$.

}

\medskip

We introduce the following auxiliary function $V_0$ solution in $X_T$ of
\begin{equation}\label{eq3.5}
\left\{\begin{array}{l}
D_tV_0(t,x)   = AV_0(t,x)  + v^*(t,x),  \quad  (t,x) \in Q_T,  \\ \\
V_0(0,x) = v_0(x),  \quad  x \in  \Omega \\ \\
BV_0(t,y)  = \Psi(0)(t,y) + v^*_\Gamma (t,y), \quad (t,y) \in \Sigma_\tau.
\end{array}
\right.
\end{equation}

\begin{remark}
{\rm Of course, $V_0$ is the solution of (\ref{eq3.5}) in the sense of Theorem \ref{th1.4}.
Moreover, we shall consider the restrictions of $v^*$ to $(0, \tau)$, with $0 < \tau \leq T$ (see (\ref{eq1.6}))
usually writing  $v^*$ instead of
 $v^*_{|(0, \tau)}$. We shall follow the same convention for  other restrictions, if this is not likely to produce confusion.}
\end{remark}

We begin with a result of local existence.

\begin{lemma}\label{le3.1}
Assume (K1)-(K6). Then, $\forall \, r >0$, there exists $\tau(r) \in (0, T]$, such that, if $0 < \tau \leq \tau(r)$,
Problem (\ref{eqA2.6}) admits a unique solution $(v,h) \in X_\tau \times L^2(0, \tau)$ and satisfying
$$
\|v-V_0\|_{X_\tau} + \|h - h^*\|_{L^2(0, \tau)} \leq r.
$$
\end{lemma}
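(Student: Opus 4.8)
The plan is to prove local existence and uniqueness via the contraction mapping theorem, applied to a suitable fixed-point map on the product space $X_\tau \times L^2(0,\tau)$ for $\tau$ small. The key structural feature that makes this work is the choice of the weak space $X_T$: by Theorem \ref{th1.4}, the solution operator $S_\tau$ maps data with boundary term merely in $L^2(\Sigma_\tau)$ into $H^{3/4,3/2}(Q_\tau)$, and by Lemma \ref{le2.1} the troublesome memory term $\Psi$ is Lipschitz from $H^{3/4,3/2}(Q_\tau)$ into $L^2(\Sigma_\tau)$ with a small constant $C(T)\tau^{1/2}$. This $\tau^{1/2}$ gain is precisely what will drive the contraction.

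First I would set up the map. Given a candidate pair $(v,h) \in X_\tau \times L^2(0,\tau)$, define a new pair $(\tilde v, \tilde h)$ as follows. Using the last equation of (\ref{eqA2.6}), set
\begin{equation}
\tilde h(t) := h^*(t) - (\psi_1, v(t,\cdot)) - h \ast (\psi_1, v(t,\cdot)), \quad t \in (0,\tau).
\end{equation}
Then define $\tilde v := S_\tau(\mathcal{F}, v_0, \mathcal{G})$, where the interior datum $\mathcal{F}$ and boundary datum $\mathcal{G}$ are read off from the first and third equations of (\ref{eqA2.6}):
\begin{equation}
\mathcal{F} := v^* + h \ast Av - [(\psi_1,v) + h \ast (\psi_1,v)]\, z_0,
\end{equation}
\begin{equation}
\mathcal{G} := -v + \Psi(v) - h \ast Bv + [(\psi_1,v) + h \ast (\psi_1,v)]\, z_1 + v^*_\Gamma.
\end{equation}
I would verify that $\mathcal{F} \in H^{1/4,1/2}(Q_\tau)'$ and $\mathcal{G} \in L^2(\Sigma_\tau)$, so that $\tilde v \in X_\tau$ is well defined: here $h \ast Av \in H^{1/4,1/2}(Q_\tau)'$ by Lemma \ref{le1.7} (since $Av \in H^{1/4,1/2}(Q_\tau)'$ for $v \in X_\tau$), the scalar terms $(\psi_1,v) \in L^2(0,\tau)$ by (K3), and the boundary terms lie in $L^2(\Sigma_\tau)$ using $\Psi(v) \in L^2(\Sigma_\tau)$ (Lemma \ref{le2.1}) and the trace $v_{|\Sigma_\tau} \in L^p(0,\tau;L^2(\Gamma)) \hookrightarrow L^2(\Sigma_\tau)$ built into the $X_\tau$ norm. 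A fixed point of this map is exactly a solution of Problem \ref{P3} on $(0,\tau)$.

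Next I would estimate. Work on the closed ball $B_r := \{(v,h) : \|v - V_0\|_{X_\tau} + \|h - h^*\|_{L^2(0,\tau)} \leq r\}$. I would show the map sends $B_r$ into itself and is a contraction there, for $\tau$ small enough depending on $r$. The crucial point is that every difference term acquires a positive power of $\tau$. Estimating a difference of two images $(\tilde v_1, \tilde h_1)$ and $(\tilde v_2, \tilde h_2)$, the bound on $\tilde h_1 - \tilde h_2$ in $L^2(0,\tau)$ uses that $v \mapsto (\psi_1,v)$ and the convolution by $h$ both produce factors that are small with $\tau$; for the convolution one uses Young's inequality in the form $\|h \ast g\|_{L^2(0,\tau)} \leq \|h\|_{L^1(0,\tau)}\|g\|_{L^2(0,\tau)}$ together with $\|h\|_{L^1(0,\tau)} \leq \tau^{1/2}\|h\|_{L^2(0,\tau)}$. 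For $\tilde v_1 - \tilde v_2$ I would apply the uniform bound of Proposition \ref{pr1.3} to $S_\tau$ (with constant $C(T)$ independent of $\tau \in (0,T]$), then estimate the difference of the data $\mathcal{F}$ and $\mathcal{G}$: the convolution terms contribute factors $\|h_i\|_{L^1} \lesssim \tau^{1/2}$ via Lemma \ref{le1.7}, and the memory term contributes the factor $C(T)\tau^{1/2}$ from Lemma \ref{le2.1}. Collecting, one obtains a Lipschitz constant of the form $C(T,r)\,\tau^{1/2}$, which is $< 1$ for $\tau \leq \tau(r)$.

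The main obstacle is the bookkeeping of the convolution term $h \ast Av$ in the weak space $Y_\tau' = H^{1/4,1/2}(Q_\tau)'$, because $Av$ is only a distribution. One cannot estimate it naively; instead I rely on Lemma \ref{le1.7} to control $\|h \ast z\|_{Y_\tau'} \leq C\|h\|_{L^1(0,\tau)}\|z\|_{Y_\tau'}$ with constant independent of $\tau$, and crucially on the fact (Definition \ref{de3.1}) that $\|Av\|_{H^{1/4,1/2}(Q_\tau)'}$ is part of the $X_\tau$ norm, so it is under control. A secondary subtlety is ensuring the affine/self-mapping structure: the term $\Psi(v)$ must be handled via the base point $V_0$, and one checks $\|\Psi(v) - \Psi(V_0)\|_{L^2(\Sigma_\tau)}$ and the analogous convolution differences are all $O(\tau^{1/2}(\text{radius}))$, so that for $\tau$ small the image of $B_r$ stays within $B_r$. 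Once both the self-mapping and contraction properties hold, the Banach fixed point theorem (using completeness of $X_\tau$, already established) yields the unique $(v,h) \in B_r$, completing the proof.
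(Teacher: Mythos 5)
Your proposal is correct and follows essentially the same route as the paper: the same fixed-point map on the ball $Z_\tau(r)$ around $(V_0,h^*)$, with the self-mapping and contraction properties obtained from Proposition \ref{pr1.3}, Lemma \ref{le1.7}, Lemma \ref{le2.1} and the fact that $\|Av\|_{H^{1/4,1/2}(Q_\tau)'}$ and the $L^p$ trace norm are built into $\|\cdot\|_{X_\tau}$, each difference term gaining a positive power of $\tau$. The only cosmetic discrepancy is that the paper's final Lipschitz constant is $C(r)\tau^\epsilon$ with $\epsilon=\min\{1/4,\,1/2-1/p\}$ rather than exactly $\tau^{1/2}$, which changes nothing.
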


\begin{proof} We start by observing that $Z_\tau:= X_\tau \times L^2(0, \tau)$ is a Banach space with the norm
\begin{equation}
\|(z,h)\|_{Z_\tau} := \|z\|_{X_\tau} + \|h\|_{L^2(0, \tau)}.
\end{equation}
We set
\begin{equation}
Z_\tau(r):= \{(v,h) \in Z_\tau :  \|v - V_0\|_{X_\tau} + \|h - h^*\|_{L^2(0, \tau)} \leq r \},
\end{equation}
which is a closed subset of $Z_\tau$. Let $(V,H) \in Z_\tau$. We consider the element $(v,h)$ in $Z_\tau$, such that
$$
\left\{\begin{array}{l}
D_tv(t,x)   = Av(t,x) + H \ast AV(t,x) + v^*(t,x) \\ \\
 - [(\psi_1,V(t,\cdot)) + H \ast (\psi_1, V(t,\cdot))]Êz_0(x), \quad  (t,x) \in Q_\tau,  \\ \\
v(0,x) = v_0(x), \quad x \in \Omega \\ \\
Bv(t,y)  = - V(t,y) + \Psi(V)(t,y) - H \ast BV(t,y) +    [(\psi_1,V(t,\cdot)) + H \ast (\psi_1, V(t,\cdot))] z_1(y)Ê\\ \\
+ v^*_\Gamma (t,y), \quad (t,y) \in \Sigma_\tau \\ \\
h(t) = h^*(t) - (\psi_1,V(t,\cdot)) - H \ast (\psi_1, V(t,\cdot))(t), \quad t \in (0, \tau),
\end{array}
\right.
$$
and the map $P(V,H):=(v,h)$. It is clear that $(v,h)$ is a solution of $(\ref{eqA2.6})$ if and only if it is a fixed point of $P$.
Let $r >0$. We show that, if $\tau$ is sufficiently small, $P$ maps $Z_\tau(r)$ into itself. Let $(V,H) \in Z_\tau(r)$. Then $(v-V_0, h - h^*)$ satisfies
$$
\left\{\begin{array}{l}
D_t(v - V_0)(t,x)   = A(v - V_0)(t,x) + H \ast AV(t,x) \\ \\
 - [(\psi_1,V(t,\cdot)) + H \ast (\psi_1, V(t,\cdot))]Êz_0(x), \quad  (t,x) \in Q_\tau,  \\ \\
(v-V_0)(0,x) = 0, \quad x \in \Omega \\ \\
B(v - V_0)(t,y)  = - V(t,y) + \Psi(V)(t,y) - \Psi(0)(t,y)  - H \ast BV(t,y)  \\ \\  +  [(\psi_1,V(t,\cdot)) + H \ast (\psi_1, V(t,\cdot))] z_1(y),
\quad (t,y) \in \Sigma_\tau \\ \\
h(t) - h^*(t)  =  - (\psi_1,V(t,\cdot)) - H \ast (\psi_1, V(t,\cdot))(t), \quad t \in (0, \tau).
\end{array}
\right.
$$
In the following part of the proof we shall indicate by $C, C_1, C_2, \dots$ some positive constants independent of $\tau$, $r$, $V$ and $H$.
By Lemma \ref{le1.1A} we have
\begin{equation}\label{eq3.8}
\begin{array}{c}
\|(\psi_1,V(t,\cdot)) + H \ast (\psi_1, V(t,\cdot))\|_{L^2(0, \tau)}Ê
\leq \|\psi_1\|_{L^2(\Omega)} \|V\|_{L^2(Q_\tau)} \left(1 + \|H\|_{L^1(0, \tau)}\right) \\ \\
\leq C_1 \tau^{1/2} \|V\|_{H^{3/4}(0, \tau; L^2(\Omega))} \left(1 + \tau^{1/2}\|H\|_{L^2(0, \tau)}\right) \\ \\
\leq C_1 \tau^{1/2} \left(r + \|V_0\|_{H^{3/4}(0, T; L^2(\Omega))}\right) \left[1 + \tau^{1/2} (r + \|h^*\|_{L^2(0, T)})\right].
\end{array}
\end{equation}
Now we apply Proposition \ref{pr1.3} in order to estimate $\|v - V_0\|_{X_\tau}$. We get
\begin{equation}
\begin{array}{c}
\|v - V_0\|_{X_\tau} \leq C_1\big (\| H \ast AV\|_{H^{1/4,1/2}(Q_\tau)'}Ê+ \|[(\psi_1,V(t,\cdot))
+ H \ast (\psi_1, V(t,\cdot))]Êz_0(x)\|_{H^{1/4,1/2}(Q_\tau)'} \\ \\
+ \|V_{|\Sigma_\tau}\|_{L^2(\Sigma_\tau)} + \|\Psi(V) - \Psi(0)\|_{L^2(\Sigma_\tau)}  + \|H \ast BV\|_{L^2(\Sigma_\tau)} \\ \\ + \|[(\psi_1,V(t,\cdot))
+ H \ast (\psi_1, V(t,\cdot))] z_1(y)\|_{L^2(\Sigma_\tau)} \big).
\end{array}
\end{equation}
By Lemmas \ref{le1.7} and \ref{le1.3} (I), we obtain
\begin{equation}\label{eq3.10}
\begin{array}{c}
\| H \ast AV\|_{H^{1/4,1/2}(Q_\tau)'}Ê\leq C_1 \|H\|_{L^1(0, \tau)} \|AV\|_{H^{1/4,1/2}(Q_\tau)'}Ê
\leq C_1 \tau^{1/2} \|H\|_{L^2(0, \tau)} \|AV\|_{H^{1/4,1/2}(Q_\tau)'} \\ \\
\leq C_1 \tau^{1/2} \left(r + \|h^* \|_{L^2(0, \tau)}\right) \left(r + \|AV_0\|_{H^{1/4,1/2}(Q_\tau)'}\right) \\ \\
\leq C_1 \tau^{1/2} \left(r + \|h^* \|_{L^2(0, \tau)}\right) \left(r + C_2 \|AV_0\|_{H^{1/4,1/2}(Q_T)'}\right).
\end{array}
\end{equation}
From Lemma \ref{le1.3} (III) and (\ref{eq3.8}), it follows
\begin{equation}
\begin{array}{c}
\|[(\psi_1,V(t,\cdot)) + H \ast (\psi_1, V(t,\cdot))]Êz_0(x)\|_{H^{1/4,1/2}(Q_\tau)'} \\ \\
 \leq \tau^{1/4} \|[(\psi_1,V(t,\cdot)) + H \ast (\psi_1, V(t,\cdot))]Êz_0(x)\|_{L^2(Q_\tau)} \\ \\
 = \tau^{1/4} \|[(\psi_1,V(t,\cdot)) + H \ast (\psi_1, V(t,\cdot))]\|_{L^2(0, \tau)}ÊÊ\|z_0(x)\|_{L^2(\Omega)} \\ \\
 \leq  C_2\tau^{3/4} \left(r + \|V_0\|_{H^{3/4}(0, T; L^2(\Omega))}\right) \left[1 + \tau^{1/2} (r + \|h^*\|_{L^2(0, T)})\right].
 \end{array}
\end{equation}
Moreover,
\begin{equation}
\|V_{|\Sigma_\tau}\|_{ L^2(\Sigma_\tau)}Ê\leq \tau^{\frac{1}{2} - \frac{1}{p}} \|V_{|\Sigma_\tau}\|_{ L^p(0, \tau; L^2(\Gamma))}Ê
\leq \tau^{\frac{1}{2} - \frac{1}{p}} \left(r + \|V_{0|\Sigma_T}\|_{ L^p(0, T; L^2(\Gamma))}\right).
\end{equation}
Next, by Lemma \ref{le2.1}, we have
\begin{equation}
\|\Psi(V) - \Psi(0)\|_{L^2(\Sigma_\tau)} \leq C(T) \tau^{1/2} \|V\|_{H^{3/4,3/2}(Q_\tau)} \leq  C(T) \tau^{1/2} \left(r + \|V_0\|_{H^{3/4,3/2}(Q_T)}\right),
\end{equation}
\begin{equation}
\|H \ast BV\|_{L^2(\Sigma_\tau)} \leq \tau^{1/2} \|H\|_{L^2(0, \tau)} \|BV\|_{L^2(\Sigma_\tau)}
\leq \tau^{1/2} \left(r +  \|h^*\|_{L^2(0, T)}) (r + \|BV_0\|_{L^2(\Sigma_T)}\right).
\end{equation}
Finally,
\begin{equation}\label{eq3.15}
\begin{array}{c}
 \|[(\psi_1,V(t,\cdot)) + H \ast (\psi_1, V(t,\cdot))] z_1(y)\|_{L^2(\Sigma_\tau)}\\ \\
 =  \|(\psi_1,V(t,\cdot)) + H \ast (\psi_1, V(t,\cdot))\|_{L^2(0, \tau)}Ê\| z_1\|_{L^2(\Gamma)} \\ \\
 \leq C_1 \tau^{1/2} (r + \|V_0\|_{H^{3/4}(0, T; L^2(\Omega))})\left [1 + \tau^{1/2} \left(r + \|h^*\|_{L^2(0, T)}\right)\right].
 \end{array}
\end{equation}
From (\ref{eq3.8}) and (\ref{eq3.10})-(\ref{eq3.15}), we deduce the estimate
$$
\|v - V_0\|_{X_\tau}  + \|h - h^*\|_{L^2((0, \tau))} \leq C(r) \tau^\epsilon,
$$
for some $\epsilon >0$. Choosing $\tau$ such that $C(r) \tau^\epsilon \leq r$, we obtain that $P: Z_\tau(r) \to Z_\tau(r)$.

Now, for $j \in \{1,2\}$,  we take $(V_j, H_j) \in Z_\tau(r)$ and we put $(v_j, h_j):= P(V_j, H_j)$. Then the pair $(v_1-v_2, h_1-h_2)$ solves the system
$$
\left\{\begin{array}{l}
D_t(v_1 - v_2)(t,x)   = A(v_1 - v_2) (t,x) + H_1 \ast AV_1(t,x) -  H_2 \ast AV_2(t,x) \\ \\
 - [(\psi_1,V_1(t,\cdot) - V_2(t,\cdot) ) + H_1 \ast (\psi_1, V_1(t,\cdot)) - H_2 \ast (\psi_1, V_2(t,\cdot))]Êz_0(x), \quad  (t,x) \in Q_\tau,  \\ \\
v_1(0,x) - v_2(0,x) = 0,  \quad x \in \Omega \\ \\
B(v_1 - v_2) (t,y)  = V_2(t,y) - V_1(t,y) + \Psi(V_1)(t,y) - \Psi(V_2)(t,y)  - H_1 \ast BV_1(t,y) + H_2 \ast BV_2(t,y)  \\ \\
+    [(\psi_1,V_1(t,\cdot) - V_2(t,\cdot)) + H_1 \ast (\psi_1, V_1(t,\cdot)) - H_2 \ast (\psi_1, V_2(t,\cdot))] z_1(y),
\quad (t,y) \in \Sigma_\tau \\ \\
h_1(t) - h_2(t)  =  - (\psi_1,V_1(t,\cdot) - V_2(t,\cdot)) - H_1 \ast (\psi_1, V_1(t,\cdot))(t) + H_2 \ast (\psi_1, V_2(t,\cdot))(t) , \quad t \in (0, \tau),
\end{array}
\right.
$$
so that
$$
\begin{array}{c}
\|(v_1-v_2, h_1- h_2)\|_{Z_\tau} \leq C\big (\| H_1 \ast AV_1 -  H_2 \ast AV_2\|_{H^{1/4,1/2}(Q_\tau)'}Ê\\ \\
+ \|- [(\psi_1,V_1(t,\cdot) - V_2(t,\cdot) ) + H_1 \ast (\psi_1, V_1(t,\cdot)) - H_2 \ast (\psi_1, V_2(t,\cdot))]Êz_0(x)\| _{H^{1/4,1/2}(Q_\tau)'}Ê\\ \\
+\|(V_2 - V_1)_{|\Sigma_\tau}\|_{L^2(\Sigma_\tau)}Ê+ \|\Psi(V_1) - \Psi(V_2)\|_{L^2(\Sigma_\tau)} + \|H_1 \ast BV_1  - H_2 \ast BV_2\|_{L^2(\Sigma_\tau)} \\ \\Ê
+ \|[(\psi_1,V_1(t,\cdot) - V_2(t,\cdot)) + H_1 \ast (\psi_1, V_1(t,\cdot)) - H_2 \ast (\psi_1, V_2(t,\cdot))] z_1(y)\|_{L^2(\Sigma_\tau)} \\ \\
+ \|(\psi_1,V_1(t,\cdot) - V_2(t,\cdot)) + H_1 \ast (\psi_1, V_1(t,\cdot))(t) - H_2 \ast (\psi_1, V_2(t,\cdot))\|_{L^2(0, \tau)}\big ).
\end{array}
$$
By Lemma \ref{le1.1A} we have
\begin{equation}\label{eq3.16}
\begin{array}{c}
\|(\psi_1,V_1(t,\cdot) - V_2(t,\cdot)) + H_1 \ast (\psi_1, V_1(t,\cdot))(t) - H_2 \ast (\psi_1, V_2(t,\cdot))\|_{L^2(0, \tau)} \\ \\
\leq \|(\psi_1,V_1(t,\cdot) - V_2(t,\cdot)) + H_1 \ast (\psi_1, V_1(t,\cdot) - V_2(t,\cdot)) \|_{L^2(0, \tau)} \\ \\
+ \|(H_1 - H_2) \ast (\psi_1, V_2(t,\cdot))\|_{L^2(0, \tau)} \\ \\
\leq C_1 \tau^{1/2} \|V_1 - V_2\|_{H^{3/4}(0, \tau; L^2(\Omega))} \left(1 + Ê\tau^{1/2}\|H_1\|_{L^2(0, \tau)}\right))\\ \\
+ \|\psi_1\|_{L^2(\Omega)} \tau^{1/2}Ê\|H_1 - H_2\|_{L^2(0, \tau)}Ê\|V_2\|_{L^2(Q_\tau)}Ê\\ \\
\leq C_1 \tau^{1/2} \|V_1 - V_2\|_{H^{3/4}(0, \tau; L^2(\Omega))} \left(1 + Ê\tau^{1/2}\|H_1\|_{L^2(0, \tau)}\right) \\ \\
+ C_2 \tauÊ\|H_1 - H_2\|_{L^2(0, \tau)}Ê\|V_2\|_{H^{3/4}(0, \tau; L^2(\Omega))}Ê\\ \\
\leq C_3 \tau^{1/2} (1+ r + \|h^*\|_{L^2(0, T)} + \|V_0\|_{X_T}) (\|V_1 - V_2\|_{X_\tau} +  Ê\|H_1 - H_2\|_{L^2(0, \tau)}).
\end{array}
\end{equation}
Employing (\ref{eq3.16}), we obtain
\begin{equation}
\begin{array}{c}
\|- [(\psi_1,V_1(t,\cdot) - V_2(t,\cdot) ) + H_1 \ast (\psi_1, V_1(t,\cdot)) - H_2 \ast (\psi_1, V_2(t,\cdot))]Êz_0(x)\| _{H^{1/4,1/2}(Q_\tau)'}Ê\\ \\
\leq C_1 \tau^{1/4}Ê\|- [(\psi_1,V_1(t,\cdot) - V_2(t,\cdot) ) + H_1 \ast (\psi_1, V_1(t,\cdot)) - H_2 \ast (\psi_1, V_2(t,\cdot))]Êz_0(x)\| _{L^2(Q_\tau)}Ê\\ \\
\leq C_2 \tau^{3/4} \left(1+ r + \|h^*\|_{L^2(0, T)} + \|V_0\|_{X_T}\right) \left(\|V_1 - V_2\|_{X_\tau} +  Ê\|H_1 - H_2\|_{L^2(0, \tau)}\right).
\end{array}
\end{equation}
and
\begin{equation}
\begin{array}{c}
\|[(\psi_1,V_1(t,\cdot) - V_2(t,\cdot)) + H_1 \ast (\psi_1, V_1(t,\cdot)) - H_2 \ast (\psi_1, V_2(t,\cdot))] z_1(y)\|_{L^2(\Sigma_\tau)} \\ \\
 \leq C \tau^{1/2} \left(1+ r + \|h^*\|_{L^2(0, T)} + \|V_0\|_{X_T}\right) \left(\|V_1 - V_2\|_{X_\tau} +  Ê\|H_1 - H_2\|_{L^2(0, \tau)}\right).
\end{array}
\end{equation}
Next, we have
\begin{equation}
\begin{array}{c}
\| H_1 \ast AV_1 -  H_2 \ast AV_2\|_{H^{1/4,1/2}(Q_\tau)'} \\ \\
Ê\leq \| H_1 \ast A(V_1 - V_2)\|_{H^{1/4,1/2}(Q_\tau)'} + \| (H_1 - H_2) \ast AV_2\|_{H^{1/4,1/2}(Q_\tau)'} \\ \\
\leq C_1\left ( \|H_1\|_{L^1(0, \tau)}Ê\|A(V_1 - V_2)\|_{H^{1/4,1/2}(Q_\tau)'} + \|H_1-H_2\|_{L^1(0, \tau)}Ê\|A V_2\|_{H^{1/4,1/2}(Q_\tau)'} \right) \\ \\
\leq C_1 \tau^{1/2}Ê\big[\left(r + \|h^*\|_{L^2(0, T)}\right)Ê\|A(V_1 - V_2)\|_{H^{1/4,1/2}(Q_\tau)'} \\ \\
+ \|H_1-H_2\|_{L^2(0, \tau)}Ê\left(r + C_2\|AV_0\|_{H^{1/4,1/2}(Q_T)'}\right)\big],
\end{array}
\end{equation}
\begin{equation}
\begin{array}{c}
\|H_1 \ast BV_1  - H_2 \ast BV_2\|_{L^2(\Sigma_\tau)} \leq \|H_1 \ast B(V_1  - V_2)\|_{L^2(\Sigma_\tau)} + \|(H_1 - H_2) \ast B V_2\|_{L^2(\Sigma_\tau)} \\ \\
\leq \tau^{1/2}Ê\left(\|H_1\|_{L^2(0, \tau)} \|B(V_1  - V_2)\|_{L^2(\Sigma_\tau)} + \|H_1 - H_2\|_{L^2(0, \tau)}  \|B V_2\|_{L^2(\Sigma_\tau)}\right) \\ \\
\leq \tau^{1/2}Ê \left[\left(r + \|{h}^*\|_{L^2(0, T)}\right) \|B(V_1  - V_2)\|_{L^2(\Sigma_\tau)}
+ \left(r +  \|B V_0\|_{L^2(\Sigma_T)})\right \|H_1 - H_2\|_{L^2((0, \tau))}\right],
\end{array}
\end{equation}

\begin{equation}
\|(V_2 - V_1)_{|\Sigma_\tau}\|_{L^2(\Sigma_\tau)}Ê\leq \tau^{\frac{1}{2} - \frac{1}{p}} \|(V_2 - V_1)_{|\Sigma_\tau}\|_{L^p(0, \tau; L^2(\Gamma))},
\end{equation}
and, finally,
\begin{equation}\label{eq3.22}
\|\Psi(V_1) - \Psi(V_2)\|_{L^2(\Sigma_\tau)} \leq C(T) \tau^{1/2}Ê\|V_1 - V_2\|_{H^{3/4,3/2}(Q_\tau)}.
\end{equation}
From (\ref{eq3.16})-(\ref{eq3.22}), we deduce an estimate of the form
\begin{equation}
\|(v_1-v_2, h_1- h_2)\|_{Z_\tau} \leq C(r) \tau^\epsilon \|(V_1-V_2, H_1- H_2)\|_{Z_\tau},
\end{equation}
valid for every $(V_1,H_1)$ and $(V_2,H_2)$ in $Z_\tau(r)$, for some $\epsilon >0$. If we choose $\tau$ such that
$$
C(r) \tau^\epsilon < 1,
$$
and $\tau$ so small that $P$ carries $Z_\tau(r)$ into itself, we have that $P$ has a unique fixed point in $Z_\tau(r)$.

The proof is complete.
\end{proof}
To obtain global existence and uniqueness, we shall employ the following

\begin{lemma}\label{le3.2}
Assume that (K1)-(K6) are satisfied. Let $0 < \tau < \tau + \delta \leq \min\{T, 2\tau\}$ and let $(V,H) \in X_{\tau+\delta} \times L^2(0, \tau+\delta)$
be a solution of (\ref{eqA2.6}) (replacing $T$ with $\tau + \delta$). Setting
\begin{equation}\label{eq3.25B}
\begin{array}{cc}
w:= V_{|Q_{\tau}}, & v(t, \cdot):= V(\tau + t, \cdot),
\\ \\
h_0:= H_{|(0, \tau)}, & h(t):= H(\tau+t),
\end{array}
\end{equation}
there following propositions hold.

(I) $(v,h) \in X_\delta \times L^2(0, \delta)$ and solves the system
\begin{equation}\label{eq3.25}
\left\{\begin{array}{l}
D_tv(t,x)   = Av(t,x) + ({\widetilde h}_0 \ast {\widetilde A}w )(\tau + t,x) + (h \ast  Aw)(t,x) + (h_0 \ast Av)(t,x)  + v^*(\tau + t,x) \\ \\
 - [(\psi_1,v(t,\cdot)) + {\widetilde h}_0 \ast (\psi_1, \widetilde w)(\tau+t) + h \ast (\psi_1, w)(t) + h_0 \ast (\psi_1,v)(t)]Ê z_0(x), \quad (t,x) \in Q_\delta,  \\ \\
v(0,x) = w(\tau,x), \quad x \in \Omega \\ \\
Bv(t,y)  = - v(t,y) +\Psi(w,v)(t,y) - [({\widetilde h}_0 \ast {\widetilde B }w)(\tau+t,y) + (h \ast Bw) (t,y) + (h_0 \ast Bv)(t,y)]Ê\\ \\
+
 [(\psi_1,v(t,\cdot)) + \tilde h_0 \ast (\psi_1, \widetilde w)(\tau+t) + h \ast (\psi_1, w)(t) + h_0 \ast (\psi_1,v)(t)]Êz_1(y)Ê\\ \\
+ v^*_\Gamma (\tau+t,y), \quad (t,y) \in \Sigma_\delta \\ \\
h(t) = h^*(\tau+t) - [(\psi_1,v(t,\cdot)) + {\widetilde h}_0 \ast (\psi_1, \tilde w)(\tau+t) + h \ast (\psi_1, w)(t) + h_0 \ast (\psi_1,v)(t)], \\ \\
 t \in (0, \delta),
\end{array}
\right.
\end{equation}
where ${\widetilde h}_0$, ${\widetilde A}w$ and ${\widetilde B}w$ indicate the trivial extensions of $h_0$, $Aw$ and $Bw$ (see, in particular, (\ref{eq1.7A})).

(II) Let $(w,h_0) \in X_\tau \times L^2(0, \tau)$ be a solution of (\ref{eqA2.6}), with $\tau$ replacing $T$. Let $(v,h) \in X_\delta \times L^2(0, \delta)$
be a solution of (\ref{eq3.25}). Setting
$$
V(t,x):= \left\{\begin{array}{lll}
w(t,x), & \mbox{ if } & (t,x) \in Q_\tau, \\ \\
v(t-\tau,x), &\mbox{ if }  & (t,x) \in [\tau, \tau+\delta) \times \Omega,
\end{array}
\right.
$$
$$
H(t):= \left\{\begin{array}{lll}
h_0(t), & \mbox{ if } & t \in (0, \tau), \\ \\
h(t-\tau), & \mbox{ if } & t \in [\tau, \tau+\delta),
\end{array}
\right.
$$
then $(V,H) \in X_{\tau+\delta} \times L^2(0, \tau+\delta )$ and solves (\ref{eqA2.6}), where we have  replaced $T$ by $\tau+\delta$.
\end{lemma}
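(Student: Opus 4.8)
The plan is to read both parts as the two directions of a single family of identities obtained by splitting every term of (\ref{eqA2.6}) at the time $t=\tau$. The whole argument rests on three ingredients: the convolution calculus of Lemma \ref{le1.8}, the decomposition and gluing properties of $X_T$ encoded in Proposition \ref{pr1.2} and Remark \ref{re3.21}, and the localisation of the memory term built into the definition of $\Psi(w,v)$ following (\ref{eq2.7}). The crucial structural hypothesis is $\tau+\delta\le 2\tau$, i.e.\ $\delta\le\tau$, which is exactly the relation $T'\le 2T$ (with $T=\tau$, $T'=\tau+\delta$) needed to apply Lemma \ref{le1.8} (III). Throughout I identify $H^{1/4,1/2}(Q_T)'$ with the abstract dual $Y_T'$ of Definition \ref{alfa} for $\beta=1/4$, $V_2=L^2(\Omega)$, $V_1=H^{1/2}(\Omega)$, so that the convolutions $H\ast AV$, $H\ast BV$ and $H\ast(\psi_1,V)$ all fall under Lemma \ref{le1.8}.

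For (I) I would first fix the functional framework. Writing $V=S_{\tau+\delta}(D_tV-AV,V(0,\cdot),BV)$ and invoking Proposition \ref{pr1.2} shows at once that $w:=V_{|Q_\tau}\in X_\tau$, that $v:=V(\tau+\cdot)\in X_\delta$, and that $v(0,\cdot)=V(\tau,\cdot)=w(\tau,\cdot)$; the scalar datum $h=H(\tau+\cdot)$ lies in $L^2(0,\delta)$ trivially. Consequently the function glued from $(w,v)$ by (\ref{eq2.7}) is exactly $V$, whence $\Psi(w,v)(t,y)=\Psi(V)(\tau+t,y)$ by the very definition of $\Psi(w,v)$. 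I would then restrict each of the four relations of (\ref{eqA2.6}) to the window $(\tau,\tau+\delta)$ and translate back by $\tau$. Since $A$, $B$ and $(\psi_1,\cdot)$ commute with time–restriction, trivial extension and translation, Lemma \ref{le1.8} (III) converts each convolution into three summands; for the parabolic equation this reads
$$
(H\ast AV)(\tau+t,x)=(\widetilde h_0\ast\widetilde A w)(\tau+t,x)+(h\ast Aw)(t,x)+(h_0\ast Av)(t,x),
$$
and analogously for $H\ast BV$ and $H\ast(\psi_1,V)$. Together with the inhomogeneities $v^*(\tau+\cdot)$, $v^*_\Gamma(\tau+\cdot)$, $h^*(\tau+\cdot)$, the trace term $-V(\tau+\cdot)=-v$ and $\Psi(V)(\tau+\cdot)=\Psi(w,v)$, collecting these identities is precisely system (\ref{eq3.25}).

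For (II) I would run the same identities backwards. As $v(0,\cdot)=w(\tau,\cdot)$ by the third line of (\ref{eq3.25}), Remark \ref{re3.21} yields $V\in X_{\tau+\delta}$, and $H\in L^2(0,\tau+\delta)$ is immediate. To see that $(V,H)$ solves (\ref{eqA2.6}) on $(0,\tau+\delta)$, I would use Proposition \ref{pr1.2} in the gluing direction: it suffices to check that the right–hand data of (\ref{eqA2.6}) restrict on $(0,\tau)$ to the data solved by $(w,h_0)$ and, on $(\tau,\tau+\delta)$ shifted by $\tau$, to the data of (\ref{eq3.25}) solved by $(v,h)$. On the first window this is Lemma \ref{le1.8} (I), giving $(H\ast AV)_{|(0,\tau)}=h_0\ast Aw$ (and likewise for $B$ and $(\psi_1,\cdot)$); on the second it is the decomposition already secured in (I) via Lemma \ref{le1.8} (III), together with $\Psi(V)(\tau+\cdot)=\Psi(w,v)$. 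Uniqueness of $S_{\tau+\delta}$ then forces $V$ to coincide with the weak solution of (\ref{eqA2.6}), completing the proof.

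I expect the main obstacle to be the bookkeeping for the dual–space term $H\ast AV$: one must confirm that the abstract restriction, trivial extension and translation of Lemma \ref{le1.8} coincide with the spatially pointwise operations on $AV$, and in particular that $A$ commutes with time–restriction so that $\widetilde{(AV)_{|(0,\tau)}}=\widetilde A w$ in the sense of (\ref{eq1.7A}). This is where the density of $L^2(0,T;V_2)$ in $Y_T'$ (Lemma \ref{le1.2}) is decisive, since it reduces every such identity to the genuinely smooth case treated inside the proof of Lemma \ref{le1.8}; once these identifications are in place the remaining terms are honest $L^2$ functions and the splitting is routine.
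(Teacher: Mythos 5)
Your proposal is correct and follows essentially the same route as the paper: part (I) is obtained exactly as you describe, by restricting to the window $(\tau,\tau+\delta)$, translating, and splitting the convolutions via Proposition \ref{pr1.2} and Lemma \ref{le1.8} (III) (the condition $\tau+\delta\le 2\tau$ playing precisely the role you identify), while part (II) is the gluing direction via Proposition \ref{pr1.2} together with the identifications $Aw=AV_{|(0,\tau)}$, $Av=AV_{|(\tau,\tau+\delta)}(\cdot+\tau)$ and the reduction to the $L^2$ case by density. The paper states (I) in one line and writes out only the splitting of the source term for the first equation in (II), so your write-up is simply a more explicit version of the same argument.
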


\begin{proof} (I) follows from Proposition \ref{pr1.2} and Lemma \ref{le1.8} (III).

Concerning (II), by Proposition \ref{pr1.2} we obtain $(V,H) \in X_{\tau+\delta} \times L^2(0, \tau+\delta)$ and solving
$$
D_t V = AV + f \quad {\mbox in }Ê\quad Q_{\tau + \delta},
$$
with
$$
\begin{array}{c}
f_{|(0, \tau)} = h_0 \ast Aw + v^*_{|(0, \tau)}Ê- [(\psi_1, w(t,\cdot)) + h_0 \ast (\psi_1, w(t,\cdot)]Êz_0(\cdot) \\ \\
= \{H \ast AV + v^* - [(\psi_1,V) + H \ast (\psi_1, V)]Êz_0\}_{|(0, \tau)},
\end{array}
$$
$$
\begin{array}{c}
f_{|(\tau, \tau+\delta)}(t,\cdot)Ê= ({\widetilde h}_0 \ast {\widetilde A}w ) (t,\cdot)
+ (h \ast  Aw)(t-\tau,\cdot) + (h_0 \ast Av)(t-\tau,\cdot)  + v^*_{|(\tau, \tau + \delta)}(t,\cdot) \\ \\
 - [(\psi_1,v(t-\tau,\cdot)) + {\widetilde h}_0 \ast (\psi_1, \widetilde w)(t) + h \ast (\psi_1, w)(t-\tau) + h_0 \ast (\psi_1,v)(t-\tau)]Ê z_0(\cdot) \\ \\
 = \{H \ast AV + v^* - [(\psi_1,V) + H \ast (\psi_1, V)]Êz_0\}_{|(\tau, \tau+\delta)},
 \end{array}
$$
as it is clear that $Aw = AV_{|(0, \tau)}$ and $Av = AV_{|(\tau, \tau + \delta)}(\cdot + \tau)$.
So, by Lemma \ref{le1.8}, we conclude that the first equation in (\ref{eqA2.6})
is satisfied if we replace $T$ by $\tau+\delta$. The validity of the other equations in (\ref{eqA2.6}) can be proved analogously.
\end{proof}

Now we are able to prove uniqueness.

\begin{lemma}\label{le3.3}
Assume (K1)-(K6). Then Problem (\ref{eqA2.6}) has, at most, one solution $(v,h) \in X_\tau \times L^2(0, \tau)$, $\forall \, \tau \in (0, T]$.
\end{lemma}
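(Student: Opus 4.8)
The plan is to derive uniqueness from the contraction-type estimate already obtained in the second half of the proof of Lemma \ref{le3.1}, combined with the restart mechanism furnished by Lemma \ref{le3.2}, organised as a continuation argument in time. Fix $\tau \in (0,T]$ and suppose $(v_1,h_1)$ and $(v_2,h_2)$ are two solutions of (\ref{eqA2.6}) in $X_\tau \times L^2(0,\tau)$ sharing the same data.

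First I would show that the two solutions coincide on a small initial interval $[0,\tau_1]$. Since $(v_1,h_1)$ and $(v_2,h_2)$ are fixed points of the map $P$ (taking $V_j = v_j$, $H_j = h_j$), the pair $(v_1 - v_2, h_1 - h_2)$ solves precisely the difference system written in the proof of Lemma \ref{le3.1}. The estimates (\ref{eq3.16})--(\ref{eq3.22}), obtained through Lemmas \ref{le1.7}, \ref{le1.3}, \ref{le2.1} and Proposition \ref{pr1.3}, then yield, for every $\tau_1 \in (0,\tau]$, a bound of the form
\[
\|(v_1 - v_2, h_1 - h_2)\|_{Z_{\tau_1}} \le C(M)\,\tau_1^{\epsilon}\,\|(v_1 - v_2, h_1 - h_2)\|_{Z_{\tau_1}},
\]
where $\epsilon > 0$ and $M$ bounds the norms $\|v_j\|_{X_{\tau_1}} \le C\|v_j\|_{X_\tau}$ and $\|h_j\|_{L^2(0,\tau_1)} \le \|h_j\|_{L^2(0,\tau)}$, the restriction bound for the $H^{1/4,1/2}(Q)'$ component being provided by Lemma \ref{le1.3} (I). Since $M$ is finite and independent of $\tau_1$, the factor $C(M)\tau_1^{\epsilon}$ is $<1$ as soon as $\tau_1$ is small, forcing $(v_1,h_1) \equiv (v_2,h_2)$ on $[0,\tau_1]$. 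I stress that this uses the contraction factor $\tau_1^{\epsilon}$ directly and does \emph{not} require either solution to lie in the ball $Z_{\tau_1}(r)$.

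Next I would propagate agreement to the whole interval. Set $t^\ast := \sup\{ t \in [0,\tau] : (v_1,h_1) \equiv (v_2,h_2) \text{ on } [0,t]\}$; by the previous step $t^\ast \ge \tau_1 > 0$, and since agreement on every $[0,t']$ with $t' < t^\ast$ gives agreement a.e.\ on $Q_{t^\ast}$ and $(0,t^\ast)$, the two solutions coincide on $[0, t^\ast]$. Assume, for contradiction, $t^\ast < \tau$. Put $w := v_{1|Q_{t^\ast}} = v_{2|Q_{t^\ast}}$ and $h_0 := h_{1|(0,t^\ast)} = h_{2|(0,t^\ast)}$, and choose $\delta \in (0, \min\{t^\ast, \tau - t^\ast\}]$, so that the hypothesis $t^\ast + \delta \le \min\{T, 2t^\ast\}$ of Lemma \ref{le3.2} holds with $t^\ast$ in the role of $\tau$. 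By Lemma \ref{le3.2} (I) the shifted pairs $(v_j(t^\ast + \cdot), h_j(t^\ast + \cdot))$, $j=1,2$, both solve system (\ref{eq3.25}) on $[0,\delta]$, with the same initial value $w(t^\ast,\cdot)$ and identical past-history data built from $w$ and $h_0$. Since (\ref{eq3.25}) has exactly the structure of (\ref{eqA2.6}) --- the additional convolution terms involving the frozen past $\widetilde h_0,\widetilde w$ being affine and the memory contribution being controlled by Corollary \ref{co2.1} --- the difference estimate of the first step applies verbatim to (\ref{eq3.25}), so for $\delta$ small it forces $v_1(t^\ast + \cdot) = v_2(t^\ast + \cdot)$ and $h_1(t^\ast + \cdot) = h_2(t^\ast + \cdot)$ on $[0,\delta]$. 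This contradicts the maximality of $t^\ast$; hence $t^\ast = \tau$ and the two solutions coincide.

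The main obstacle I anticipate lies in the continuation step: one must be sure that the difference estimate is genuinely translation invariant, i.e.\ that the restarted system (\ref{eq3.25}) enjoys the same smallness-in-time estimates as (\ref{eqA2.6}) with constants \emph{independent} of the splitting time $t^\ast$. This is exactly where Lemma \ref{le3.2} (I) and the affine representation of the convolution in Lemma \ref{le1.8} (III) (cf.\ Remark \ref{re4.4}), together with the uniform-in-$\tau$ constants in Corollary \ref{co2.1} and in Lemmas \ref{le1.7} and \ref{le1.3}, are essential: they ensure that the past-history terms enter as fixed data and that the contraction constant depends only on a bound $M$ for the two solutions and on $\delta^{\epsilon}$. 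The remaining points --- boundedness of the restricted norms and the finiteness of $M$ --- are routine consequences of the restriction estimates in Lemma \ref{le1.3}.
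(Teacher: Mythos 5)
Your proposal is correct and follows essentially the same route as the paper's proof: coincidence on a short initial interval via the difference estimates underlying Lemma \ref{le3.1}, then a continuation argument in which the maximal agreement time $t^\ast$ is assumed smaller than $\tau$, the system is restarted at $t^\ast$ through Lemma \ref{le3.2} (I) with $\delta \le \min\{t^\ast,\tau-t^\ast\}$, and the uniform-in-time contraction estimate (Corollary \ref{co2.1}, Lemmas \ref{le1.3} and \ref{le1.7}, and the affine structure from Lemma \ref{le1.8} (III)) forces the difference to vanish on a further interval. The only cosmetic deviation is that for the initial interval the paper invokes uniqueness of the fixed point of $P$ in the ball $Z_{\tau_1}(r)$, whereas you apply the difference estimate directly with a constant depending on a bound $M$ for the two solutions; both rest on the same inequalities.
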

\begin{proof} Let $(V_1,H_1)$ and $(V_2,H_2)$ be solutions of (\ref{eqA2.6}) in $X_\tau \times L^2(0, \tau)$.
We observe that there exists $\tau_1 \in (0, \tau]$ such that $(V_1,H_1)$ and $(V_2,H_2)$ coincide in $Q_{\tau_1} \times (0, \tau_1)$.
This follows easily from Lemma \ref{le3.1}. In fact, by Lemma \ref{le1.3} (I), there exists $r >0$ such that, for $j \in \{1,2\}$ and
$\forall \,\sigma \in (0, \tau]$,
$$
\|(V_j - V_0)_{|Q_\sigma}\|_{X_\sigma} + \|(H_j - h^*)_{|(0, \sigma)}\|_{L^2(0, \sigma)} \leq r.
$$
Then, choosing $\tau_1 \leq \tau(r)$ (see Lemma \ref{le3.1}), we obtain that $(V_1,H_1)$ and $(V_2,H_2)$ coincide in $Q_{\tau_1} \times (0, \tau_1)$.
We choose $\tau_1$ as large as possible. More precisely, we set
\begin{equation}
\begin{array}{c}
\tau_1:= \sup \{\sigma \in (0, \tau]: \|V_1 - V_2\|_{X_\sigma} + \|H_1-  H_2\|_{L^2(0, \sigma)} = 0\} \\ \\
= \max \{\sigma \in (0, \tau]: \|V_1 - V_2\|_{X_\sigma} + \|H_1-  H_2\|_{L^2(0, \sigma)} = 0\}.
\end{array}
\end{equation}
We have to show that $\tau_1 = \tau$. We assume that $\tau_1 < \tau$. We shall see that there exists $\delta \in (0, \tau - \tau_1]$ such that
$(V_1,H_1)$ and $(V_2,H_2)$ coincide in $Q_{\tau_1 + \delta} \times (0, \tau_1 + \delta)$ and this contradicts the definition of $\tau_1$.
So that, consider
\begin{equation}
\delta \in (0, \min\{\tau_1, \tau - \tau_1\}].
\end{equation}
We introduce the new functions
\begin{equation}
\begin{array}{ccc}
w:= V_{1|Q_{\tau_1}} = V_{2|Q_{\tau_1}}, & v_1(t, \cdot):= V_1(\tau_1+t, \cdot), & v_2(t, \cdot):= V_2(\tau_1+t, \cdot),
\\ \\
h_0:= H_{1|(0, \tau_1)} = H_{2|(0, \tau_1)}, & h_1(t):= H_1(\tau_1+t), & h_2(t):= H_2(\tau_1+t).
\end{array}
\end{equation}
and we consider problem (\ref{eq3.25}), where we replace $v$ by $v_j$ and $h$ by $h_j$ ($j \in \{1,2\})$.  Setting
\begin{equation}
v:= v_1 - v_2, \quad h := h_1 - h_2,
\end{equation}
we obtain that $(v,h)$ satisfies the systen
\begin{equation}\label{eq3.30}
\left\{\begin{array}{l}
D_tv(t,x)   = Av(t,x) + (h \ast  Aw)(t,x) + (h_0 \ast Av)(t,x)   \\ \\
 - [(\psi_1,v(t,\cdot)) + h \ast (\psi_1, w)(t) + h_0 \ast (\psi_1,v)(t)]Ê z_0(x), \quad  (t,x) \in Q_\delta,  \\ \\
v(0,x) = 0, \quad x \in \Omega  \\ \\
Bv(t,y)  = - v(t,y) +\Psi(w,v_1)(t,y) -\Psi(w,v_2)(t,y)  - [ (h \ast Bw) (t,y) + (h_0 \ast Bv)(t,y)]Ê\\ \\
+ [(\psi_1,v(t,\cdot)) + h \ast (\psi_1, w)(t) + h_0 \ast (\psi_1,v)(t)]Êz_1(y), \quad (t,y) \in \Sigma_\delta \\ \\
h(t) = - [(\psi_1,v(t,\cdot))  + h \ast (\psi_1, w)(t) + h_0 \ast (\psi_1,v)(t)], \quad
 t \in (0, \delta).
\end{array}
\right.
\end{equation}
Following the same arguments as in the proofs of Lemma \ref{le3.1} and also of Corollary \ref{co2.1}, we deduce that
there exist  $C, \epsilon >0$, such that, if $\delta \in (0, \min\{\tau_1, \tau - \tau_1\}]$, then
\begin{equation}
\|v\|_{X_\delta} + \|h\|_{L^2(0, \delta)} \leq C \delta^\epsilon (\|v\|_{X_\delta} + \|h\|_{L^2(0, \delta)}).
\end{equation}
Choosing $\delta$ sufficiently small, this implies $\|v\|_{X_\delta} + \|h\|_{L^2(0, \delta)} = 0$.
\end{proof}
Now we want to show that (\ref{eqA2.6}) has a unique solution in $[0, T]$. To this aim, we consider the auxiliary system
\begin{equation}\label{eq3.32}
\left\{\begin{array}{l}
D_tv(t,x)   = Av(t,x)  + (h \ast  \zeta_0)(t,x) + (h_0 \ast Av)(t,x)  + z(t,x) \\ \\
 - [(\psi_1,v(t,\cdot))  + h \ast \chi_1(t) + h_0 \ast (\psi_1,v)(t)]Ê z_0(x), \quad  (t,x) \in Q_T,  \\ \\
v(0,x) = w(\tau,x), \quad x \in \Omega,  \\ \\
Bv(t,y)  = - v(t,y) + \Psi(w,v)(t,y) - [(h \ast g_0) (t,y) + (h_0 \ast Bv)(t,y)]Ê\\ \\
+  [(\psi_1,v(t,\cdot))  + h \ast \chi_1(t) + h_0 \ast (\psi_1,v)(t)]Êz_1(y)Ê
+ g(t,y), \quad (t,y) \in \Sigma_T \\ \\
h(t) = k(t)  - [(\psi_1,v(t,\cdot))  + h \ast \chi_1(t) + h_0 \ast (\psi_1,v)(t)]Ê, \quad t \in (0, T).
\end{array}
\right.
\end{equation}

\begin{lemma}\label{le3.4} Assume that (H1)-(H2)  hold. Consider system (\ref{eq3.32}), where $\zeta_0,\, z \in H^{1/4,1/2}(Q_T)'$,
$h_0, \,k, \,\chi_1 \in L^2(0, T)$, $\psi_1, \,z_0 \in L^2(\Omega)$, $g_0,\, g \in L^2(\Sigma_T)$, $z_1 \in L^2(\Gamma)$, $w \in X_\tau$, for some $\tau >0$.
Then (\ref{eq3.32}) admits a unique solution $(v,h)$ in $X_T \times L^2(0, T)$.
\end{lemma}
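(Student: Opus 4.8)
The plan is to read (\ref{eq3.32}) as a fixed-point equation and to exploit that, once one factor of each convolution has been frozen to a given datum ($\zeta_0$, $h_0$, $g_0$, $\chi_1$), the right-hand side is an \emph{affine} function of $(v,h)$ modulo the globally Lipschitz memory term $\Psi(w,\cdot)$. This is the decisive difference from Lemma \ref{le3.1}: there the genuinely bilinear terms $h\ast Av$, $h\ast Bv$ forced a contraction constant growing with the radius $r$ of the ball, whereas here the contraction constant will depend only on the (fixed) data, not on the size of the unknown, so that a \emph{uniform} time step can be iterated up to $T$.

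On an interval $(0,\sigma)$ I would work in the closed affine subspace
$$
\mathcal A_\sigma := \{(V,H)\in X_\sigma\times L^2(0,\sigma):\ V(0,\cdot)=w(\tau,\cdot)\},
$$
the constraint being exactly what is needed for $\Psi(w,V)$ to be defined (cf. the hypothesis of Corollary \ref{co2.1}), and closed because $V\mapsto V(0,\cdot)$ is continuous from $X_\sigma$ into $H^{1/2}(\Omega)$. Given $(V,H)\in\mathcal A_\sigma$ set
$$
f:= H\ast\zeta_0 + h_0\ast AV + z - [(\psi_1,V)+H\ast\chi_1+h_0\ast(\psi_1,V)]\,z_0,
$$
$$
g_\Sigma:= -V + \Psi(w,V) - H\ast g_0 - h_0\ast BV + [(\psi_1,V)+H\ast\chi_1+h_0\ast(\psi_1,V)]\,z_1 + g,
$$
and define $P(V,H):=(v,h)$ with $v:=S_\sigma(f,w(\tau,\cdot),g_\Sigma)$ and $h:=k-[(\psi_1,V)+H\ast\chi_1+h_0\ast(\psi_1,V)]$. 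Using Lemma \ref{le1.7} for $H\ast\zeta_0$ and $h_0\ast AV$ (note $AV\in H^{1/4,1/2}(Q_\sigma)'$ since $V\in X_\sigma$) and Lemma \ref{le1.3}(III) for the $z_0$-term one checks $f\in H^{1/4,1/2}(Q_\sigma)'$; Young's inequality for $H\ast g_0$ and $h_0\ast BV$, Lemma \ref{le2.1} for $\Psi$, and $V_{|\Sigma_\sigma}\in L^p(0,\sigma;L^2(\Gamma))\hookrightarrow L^2(\Sigma_\sigma)$ give $g_\Sigma\in L^2(\Sigma_\sigma)$; hence $v\in X_\sigma$ by Definition \ref{de3.1} and, by Proposition \ref{pr1.1}(II), $v(0,\cdot)=w(\tau,\cdot)$, so $P$ maps $\mathcal A_\sigma$ into itself. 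A fixed point of $P$ is precisely a solution of (\ref{eq3.32}) on $(0,\sigma)$.

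The contraction estimate is then carried out exactly as in (\ref{eq3.8})--(\ref{eq3.22}): each difference term produced by $P(V_1,H_1)-P(V_2,H_2)$ carries a positive power $\sigma^\epsilon$ coming from Lemmas \ref{le1.1A}, \ref{le1.3}, \ref{le1.7}, \ref{le2.1}, Corollary \ref{co2.1} and Proposition \ref{pr1.3}, yielding
$$
\|P(V_1,H_1)-P(V_2,H_2)\|_{X_\sigma\times L^2(0,\sigma)}\ \le\ C\,\sigma^\epsilon\,\|(V_1,H_1)-(V_2,H_2)\|_{X_\sigma\times L^2(0,\sigma)},
$$
where $C$ depends only on $T$ and on the norms of $\zeta_0,h_0,\chi_1,g_0,\psi_1,z_0,z_1$ --- crucially \emph{not} on $z,g,k,w$ nor on the size of $(V,H)$, because $H\ast\zeta_0$ and $h_0\ast AV$ are linear in the unknowns with coefficients fixed in advance. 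Fixing $\tau_0\in(0,T]$ with $C\tau_0^\epsilon<1$, Banach's theorem gives a unique solution of (\ref{eq3.32}) on $(0,\tau_0)$.

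Global solvability follows by finite continuation, and here the uniformity of $\tau_0$ is the whole point. Assume a solution has been built on $(0,a)$; on $(a,a+\tau_0)$ I would pass to the shifted variable and use Lemma \ref{le1.8} to split every convolution into a part governed by the already-known values of $h$ and $v$ on $(0,a)$, which is absorbed into new inhomogeneous data $z,g,k$, plus a term of the same frozen form as in (\ref{eq3.32}) whose frozen factors are restrictions of $\zeta_0,h_0,\chi_1,g_0$ and hence have norms bounded by the original ones; the initial datum becomes $v(a,\cdot)\in H^{1/2}(\Omega)$ and the memory term becomes $\Psi(W,\cdot)$ with $W$ the solution on $(0,a)$. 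Since the Lipschitz constant in Corollary \ref{co2.1} is independent of the first argument, the contraction constant $C$ is unchanged and the \emph{same} $\tau_0$ works at every step; after $N=\lceil T/\tau_0\rceil$ steps one reaches $T$, and Proposition \ref{pr1.2}, Lemma \ref{le3.14} and Remark \ref{re3.21} ensure that the concatenation lies in $X_T\times L^2(0,T)$ and solves (\ref{eq3.32}), while uniqueness propagates interval by interval. I expect the main obstacle to be precisely this bookkeeping: verifying through Lemma \ref{le1.8} that each continuation step is again an instance of (\ref{eq3.32}) with admissible frozen data, so that the uniform step $\tau_0$ --- which rests on the affine rather than bilinear dependence on $(v,h)$ --- genuinely closes the finite iteration.
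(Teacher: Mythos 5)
Your proposal is correct and follows essentially the same route as the paper's proof: a contraction argument on the set of pairs with the prescribed initial value, with the key observation that the contraction constant $C\sigma^\epsilon$ depends only on the frozen factors $\zeta_0, h_0, \chi_1, g_0$ (and not on $z$, $g$, $k$, $w$), so that a uniform time step can be iterated. The continuation step — shifting by $\delta$, splitting the convolutions via Lemma \ref{le1.8}(II), and absorbing the already-determined contributions into new data $z$, $g$, $k$, $w$ so that the shifted problem is again an instance of (\ref{eq3.32}) — is exactly what the paper does.
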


\begin{proof} We prove the result in two steps. First, we show that there exists $\delta \in (0, T]$, independent of $z$, $w$, $g$ and $k$, such that
(\ref{eq3.32}) has a unique solution in $X_\delta \times L^2(0, \delta)$. This can be proved as follows. Set
$$
Z_\delta := \{(V,H) \in X_\delta \times L^2(0, \delta) : V(0,\cdot) = w(\tau,\cdot)\}, \quad  0 < \delta \leq T,
$$
which is a closed subset of $X_\delta \times L^2(0, \delta)$. If $(V, H) \in Z_\delta$,  we consider the solution $(v,h) \in Z_\delta$ of
$$
\left\{\begin{array}{l}
D_tv(t,x)   = Av(t,x)  + (H \ast  \zeta_0)(t,x) + (h_0 \ast AV)(t,x)  + z(t,x) \\ \\
 - [(\psi_1,V(t,\cdot))  + H \ast \chi_1(t) + h_0 \ast (\psi_1,V)(t)]Ê z_0(x), \quad  (t,x) \in Q_T,  \\ \\
v(0,x) = w(\tau,x), \quad x \in \Omega,  \\ \\
Bv(t,y)  = - V(t,y) + \Phi(w,V)(t,y) - [(H \ast g_0) (t,y) + (h_0 \ast BV)(t,y)]Ê\\ \\
+
  [(\psi_1,V(t,\cdot))  + H \ast \chi_1(t) + h_0 \ast (\psi_1,V)(t)]Êz_1(y)Ê
+ g(t,y), \quad (t,y) \in \Sigma_T \\ \\
h(t) = k(t)  - [(\psi_1,V(t,\cdot))  + H \ast \chi_1(t) + h_0 \ast (\psi_1,V)(t)]Ê, \quad t \in (0, \delta).
\end{array}
\right.
$$
We can show that, if $\delta$ is sufficiently small, the mapping $(V,H) \to (v,h)$ admits a unique fixed point.
In fact, considering $(V_j,H_j) \to (v_j,h_j)$ ($j \in \{1,2\}$), then we have
$$
\left\{\begin{array}{l}
D_t(v_1-v_2)(t,x)   = A(v_1 - v_2) (t,x)  + [(H_1 - H_2) \ast  \zeta_0)(t,x) + [h_0 \ast A(V_1 - V_2)](t,x)  \\ \\
 - [(\psi_1,V_1(t,\cdot) - V_2(t,\cdot))  + (H_1 - H_2) \ast \chi_1(t) + h_0 \ast (\psi_1,V_1 - V_2)(t)]Ê z_0(x), \quad  (t,x) \in Q_T,  \\ \\
v_1(0,x) - v_2(0,x) = 0,  \quad x \in \Omega,  \\ \\
B(v_1 - v_2)(t,y)  = - V_1(t,y) + V_2(t,y) + \Phi(w,V_1)(t,y) - \Phi(w,V_2)(t,y) \\ \\
 - [((H_1 - H_2) \ast g_0) (t,y) + (h_0 \ast B(V_1 - V_2))(t,y)]Ê\\ \\
+  [(\psi_1,V_1(t,\cdot) - V_2(t,\cdot))  + (H_1 - H_2) \ast \chi_1(t) + h_0 \ast (\psi_1,V_1 - V_2)(t)]Êz_1(y), \quad (t,y) \in \Sigma_T, \\ \\
h_1(t) - h_2(t) =   - [(\psi_1,V_1(t,\cdot) - V_2(t, \cdot))  + (H_1 - H_2) \ast \chi_1(t) + h_0 \ast (\psi_1,V_1 - V_2)(t)]Ê, \quad t \in (0, \delta),
\end{array}
\right.
$$
so that
$$
\|v_1 - v_2\|_{X_\delta} + \|h_1 - h_2\|_{L^2(0, \delta)} \leq C \delta^\epsilon \left(\|V_1 - V_2\|_{X_\delta} + \|H_1 - H_2\|_{L^2(0, \delta)}\right),
$$
for some $\epsilon >0$, and $C$ and $\epsilon$ independent of $z$, $w$, $g$, $k$ (cf. Corollary \ref{co2.1}).
Hence, if $\delta$ is sufficiently small, problem (\ref{eq3.32}) has a unique solution $(v,h) \in X_\delta \times L^2(0, \delta)$.
Observe that, in case $\delta < T$, we can extend $(v, h)$ to $(0,T)$ on account of Proposition \ref{pr1.2} and Lemma \ref{le1.8} (II).
Indeed, taking as new unknowns $v_1(t, \cdot):= v(\delta + t, \cdot)$ and $h_1(t):= h(\delta + t)$, then $(v_1,  h_1)$ satisfies
\begin{equation}\label{eq3.33}
\left\{\begin{array}{l}
D_t  v_1(t,x)   = A  v_1(t,x)  + ( h_1 \ast \zeta_0)(t,x) + ({\widetilde h}_{|(0, \delta)} \ast  \zeta_0)(t +\delta,x)  \\ \\
+ (h_0 \ast Av_1)(t,x) +  (h_0 \ast {\widetilde A} v_{|(0, \delta)})(t + \delta,x) + z(t + \delta,x) \\ \\
 - [(\psi_1,v_1(t,\cdot))  + h_1 \ast \chi_1(t) + ({\widetilde h}_{|(0, \delta)} \ast \chi_1) (t+\delta) +  h_0 \ast (\psi_1,v_1)(t) \\ \\
 + h_0 \ast (\psi_1,{\widetilde v}_{|(0, \delta)})(t+\delta)]Ê z_0(x), \quad  (t,x) \in Q_{\min\{\delta, T-\delta\}},  \\ \\
v_1(0,x) = w_1(\tau+\delta,x) = v(\delta, x), \quad x \in \Omega, \\ \\
Bv_1(t,y)  = - v_1(t,y) + \Phi(w_1,v_1)(t,y) - [(h_1 \ast g_0) (t,y) + ({\widetilde h}_{|(0, \delta)} \ast g_0) (t+\delta,y) \\ \\
+ (h_0 \ast Bv_1)(t,y) + (h_0 \ast {\widetilde B}v_{|(0, \delta)})(t+\delta,y)] \\ \\
+    [(\psi_1,v_1(t,\cdot))  + h_1 \ast \chi_1(t) + ({\widetilde h}_{|(0, \delta)} \ast \chi_1) (t+\delta) +  h_0 \ast (\psi_1,v_1)(t) \\ \\
 + h_0 \ast (\psi_1,{\widetilde v}_{|(0, \delta)})(t+\delta)]
 Êz_1(y)Ê + g(t+\delta,y), \quad (t,y) \in \Sigma_{\min\{\delta, T-\delta\}}, \\ \\
h_1(t) = k(t+\delta)  -  [(\psi_1,v_1(t,\cdot))  + h_1 \ast \chi_1(t) + {\widetilde h}_{|(0, \delta)} \ast \chi_1 (t+\delta) +  h_0 \ast (\psi_1,v_1)(t) \\ \\
 + h_0 \ast (\psi_1,{\widetilde v}_{|(0, \delta)})(t+\delta)], \quad t \in (0, \min\{\delta, T-\delta\}),
\end{array}
\right.
\end{equation}
with
$$
w_1(t,x) = \left\{\begin{array}{ll}
w(t,x) & \mbox{ if }Êt \in [0, \tau], \\ \\
v(t-\tau,x) & \mbox{ if }Êt \in [\tau, \tau +\delta].
\end{array}
\right.
$$
Now we observe that (\ref{eq3.33}) is a system of the same form of (\ref{eq3.32}). In fact it suffices to replace $v$ by $v_1$, $h$ by $h_1$, $z(t,x)$ by
$$({\widetilde h}_{|(0, \delta)} \ast \zeta_0)(t +\delta,x) +
  (h_0 \ast {\widetilde A} v_{|(0, \delta)})(t + \delta,x) + z(t + \delta,x) - [ {\widetilde h}_{|(0, \delta)} \ast \chi_1 (t+\delta)
 + h_0 \ast (\psi_1,{\widetilde v}_{|(0, \delta)})(t+\delta)]Ê z_0(x),
$$
$w$ by $w_1$, $g(t,y)$ by
$$
- [ {\widetilde h}_{|(0, \delta)} \ast g_0 (t+\delta,y) + (h_0 \ast {\widetilde B}v_{|(0, \delta)})(t+\delta,y)]
+ [{\widetilde h}_{|(0, \delta)} \ast \chi_1 (t+\delta) + h_0 \ast (\psi_1,{\widetilde v}_{|(0, \delta)})(t+\delta)]
 Êz_1(y)Ê+ g(t+\delta,y),
$$
and $k(t)$ by
$$k(t+\delta) - [{\widetilde h}_{|(0, \delta)} \ast \chi_1 (t+\delta) + h_0 \ast (\psi_1, {\widetilde v}_{|(0, \delta)})(t+\delta)].$$
So, following the same arguments as in the first part of the proof, we can determine a unique solution $(v_1,h_1)$ in
$X_{\min\{\delta,T-\delta\}} \times L^2(0, \min\{\delta,T-\delta\})$. Now, setting
$$
v(t,x):= v_1(t-\delta,x), \quad h(t) := h_1(t-\delta), \quad \mbox{Êfor }Êt \in (\delta, \min\{2\delta, T\}),
$$
and applying Proposition \ref{pr1.2}, together with Lemma \ref{le1.8}, we obtain a unique solution in
$X_{\min\{2\delta, T\}} \times L^2(0, \min\{2\delta, T\})$ (see also the proof of Lemma \ref{le3.2} (II)).
In case we have $2\delta < T$, we can iterate the method and, in a finite number of steps, we can construct a solution in $[0, T]$.
\end{proof}

To conclude, we state the main result of this section
\begin{theorem}\label{th6.6}
Assume (K1)-(K6). Then (\ref{eqA2.6}) has a unique solution in $X_T \times L^2(0, T)$.
\end{theorem}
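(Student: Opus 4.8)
The plan is to combine the local existence result of Lemma \ref{le3.1} with a continuation argument built on the decomposition Lemma \ref{le3.2} and on the global solvability of the \emph{affine} auxiliary problem (\ref{eq3.32}) established in Lemma \ref{le3.4}; uniqueness will then come for free from Lemma \ref{le3.3}. The decisive structural fact, already isolated in Remark \ref{re4.4}, is that once a solution is known on an initial interval $[0,\tau]$, the equation governing its continuation ceases to be genuinely nonlinear in the pair of unknowns: by the splitting of Lemma \ref{le1.8} (III) every convolution decomposes into terms in which the new kernel $h$ is convolved against the \emph{fixed} data $Aw$, $Bw$, $(\psi_1,w)$, and terms in which the \emph{fixed} kernel $h_0$ is convolved against the new unknowns $v$ and $(\psi_1,v)$. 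Hence the continuation system is exactly of the template (\ref{eq3.32}), to which Lemma \ref{le3.4} applies.

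Concretely, I would first invoke Lemma \ref{le3.1} (with, say, $r=1$) to produce $\tau_0\in(0,T]$ and a solution $(V,H)\in X_{\tau_0}\times L^2(0,\tau_0)$ of (\ref{eqA2.6}) on $[0,\tau_0]$. If $\tau_0=T$ we are done, so assume $\tau_0<T$. The continuation step is as follows: suppose (\ref{eqA2.6}) has a solution $(w,h_0)\in X_\tau\times L^2(0,\tau)$ on $[0,\tau]$ with $\tau<T$, and set $\delta:=\min\{\tau,\,T-\tau\}>0$, so that $\tau+\delta\le\min\{T,2\tau\}$, which is precisely the range in which Lemma \ref{le3.2} is valid. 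By Lemma \ref{le3.2} (I) the continuation must solve (\ref{eq3.25}). Reading off the coefficients, (\ref{eq3.25}) is an instance of (\ref{eq3.32}) with $\zeta_0=(Aw)_{|(0,\delta)}$, $g_0=(Bw)_{|(0,\delta)}$, $\chi_1=(\psi_1,w)_{|(0,\delta)}$, and with $z$, $g$, $k$ the shifted and trivially extended data assembled from $v^*(\tau+\cdot)$, $v^*_\Gamma(\tau+\cdot)$, $h^*(\tau+\cdot)$ and the $\widetilde h_0$-convolutions; by Lemmas \ref{le1.3} and \ref{le1.8} these lie in $H^{1/4,1/2}(Q_\delta)'$, $L^2(\Sigma_\delta)$ and $L^2(0,\delta)$ respectively, so all hypotheses of Lemma \ref{le3.4} hold. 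Lemma \ref{le3.4} then furnishes a solution $(v,h)\in X_\delta\times L^2(0,\delta)$ of (\ref{eq3.25}), and Lemma \ref{le3.2} (II) glues $(w,h_0)$ and $(v,h)$ into a solution of (\ref{eqA2.6}) on the longer interval $[0,\tau+\delta]$.

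It remains to iterate and to count. Starting from $\tau_0$, the map $\tau\mapsto\tau+\min\{\tau,T-\tau\}$ doubles $\tau$ while $\tau\le T/2$ and otherwise jumps directly to $T$; hence after finitely many (at most $\lceil\log_2(T/\tau_0)\rceil+1$) applications of the continuation step we obtain a solution of (\ref{eqA2.6}) on all of $[0,T]$, that is, an element of $X_T\times L^2(0,T)$. Uniqueness is exactly the content of Lemma \ref{le3.3}, applied with $\tau=T$.

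Finally, a word on where the real work sits. The analytic difficulties --- the non-Lipschitz boundary trace, the first-kind integral equation, and the contraction estimates --- have already been dispatched in Lemmas \ref{le3.1} and \ref{le3.4}; in the present proof the only point demanding care is the identification of (\ref{eq3.25}) with (\ref{eq3.32}). One must verify that every shifted and trivially extended datum, in particular the cross terms $\widetilde h_0\ast\widetilde{Aw}\,(\tau+\cdot)$ and $\widetilde h_0\ast(\psi_1,\widetilde w)(\tau+\cdot)$, genuinely lands in the correct dual or $L^2$ space on $[0,\delta]$. This is the step I expect to be the main (though essentially routine) obstacle, and it is precisely where the constraint $\delta\le\tau$, inherited from Lemma \ref{le1.8} (III), is indispensable.
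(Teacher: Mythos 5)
Your proposal is correct and follows essentially the same route as the paper: local existence from Lemma \ref{le3.1}, identification of the continuation system (\ref{eq3.25}) as an instance of the affine template (\ref{eq3.32}) so that Lemma \ref{le3.4} applies, gluing via Lemma \ref{le3.2} (II), doubling of the interval until $T$ is reached, and uniqueness from Lemma \ref{le3.3}. (Incidentally, your reading $\chi_1=(\psi_1,w)$ matches the displayed systems; the paper's proof writes $\chi_1(t):=-(\psi_1,w(t,\cdot))$, which appears to be a sign slip there.)
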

\begin{proof} The uniqueness was already proved in Lemma \ref{le3.3}.
Concerning the existence, we have already seen in Lemma \ref{le3.1} that there exists a solution $(w,h_0) \in X_\tau \times L^2(0, \tau)$,
for some $\tau \in (0, T]$. If  $\tau < T$, we employ Lemma \ref{le3.2} in order to extend $(w,h_0)$ from $(0, \tau)$ to $(0, \min\{2\tau, T\})$.
To this aim, we consider system (\ref{eq3.25}), which is of the form (\ref{eq3.32}) whence we set
$\zeta_0:= Aw$, $\chi_1(t):= - (\psi_1, w(t,\cdot))$, $g_0(t,y) := Bw(t,y)$,
$$
z(t,x):= ({\widetilde h}_0 \ast {\widetilde A}w )(T + t,x) + v^*(\tau + t,x) - [{\widetilde h}_0 \ast (\psi_1, {\widetilde w})(\tau + t) ]z_0(x),
$$
$$
g(t,y):= -({\widetilde h}_0 \ast {\widetilde B}w)(\tau + t,y) + [{\widetilde h}_0 \ast (\psi_1, {\widetilde w})(\tau + t)]Êz_1(y)Ê+ v^*_\Gamma (\tau + t,y),
$$
$$
k(t):=  h^*(\tau + t)  -  {\widetilde h}_0 \ast (\psi_1, {\widetilde w})(\tau + t).
$$
So, by Lemma \ref{le3.4}, we can extend $(w, h_0)$ to a solution of (\ref{eqA2.6}) in $X_{\min\{2\tau,T\}} \times L^2(0, \min\{2\tau,T\})$.
If  $2\tau < T$, we iterate the procedure, replacing $\tau$ by $2\tau$, and in a finite number of steps we get the result.
\end{proof}

\section{Proof of the main result}\label{se7}

\setcounter{equation}{0}

Now we are able to prove the main result of the paper, namely Theorem \ref{thA2.1}.
By Proposition \ref{prA2.1}, we are reduced to search for a solution $(v,h) \in H^{1,2}(Q_T) \times L^2(0, T)$ to Problem \ref{P3}.
In Section \ref{se6}, we have just seen that there exists a unique solution $(v,h) \in X_T \times L^2(0, T)$.
So, it remains to show that, if (H1)-(H9) are satisfied, then $v$ belongs, in fact, to  $H^{1,2}(Q_T)$.
To this aim, we begin with the following

\begin{lemma} \label{le3.5} Taking $h \in L^2(0, T)$, $f \in H^{1/4,1/2}(Q_T)'$, $v_0 \in H^{1/2}(\Omega)$, $g \in L^2(\Sigma_T)$, consider the system
\begin{equation}\label{eq3.34}
\left\{\begin{array}{l}
D_tv(t,x)   = Av(t,x) + h \ast Av(t,x) + f(t,x), \quad  (t,x) \in Q_\tau,  \\ \\
v(0,x) = v_0(x), \quad x \in \Omega,  \\ \\
Bv(t,y)  = - h \ast Bv(t,y) Ê
+ g(t,y), \quad (t,y) \in \Sigma_\tau,
\end{array}
\right.
\end{equation}
Then the following propositions hold.

(I) (\ref{eq3.34}) has a unique solution $v$ in $X_T$.

(II) If $f \in L^2(Q_T)$, $v_0 \in H^{1}(\Omega)$, and $g \in H^{1/4,1/2}(\Sigma_T)$, then $v$ belongs to $H^{1,2}(Q_T)$.
\end{lemma}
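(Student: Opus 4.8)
The plan is to read (I) as a \emph{linear} fixed-point problem for $v$ with $h$ prescribed, and to obtain (II) by running a \emph{second} contraction, this time inside $H^{1,2}(Q_\tau)$, whose fixed point is then identified with the weak solution of (I) through the uniqueness in $X_\tau$.

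For (I), I would introduce on $X_\tau$ the map $P(V):=S_T(f+h\ast AV,\,v_0,\,g-h\ast BV)$, where $S_T$ is the operator of Corollary \ref{co1.2}. If $V\in X_\tau$, then $AV\in H^{1/4,1/2}(Q_\tau)'$ and $BV\in L^2(\Sigma_\tau)$ by Definition \ref{de3.1}, so Lemma \ref{le1.7} (with $V_2=L^2(\Omega)$, $V_1=H^{1/2}(\Omega)$, $\beta=1/4$, for which $Y_\tau'=H^{1/4,1/2}(Q_\tau)'$) gives $h\ast AV\in H^{1/4,1/2}(Q_\tau)'$, while $h\ast BV\in L^2(\Sigma_\tau)$ by Young's inequality; hence $P$ maps $X_\tau$ into itself by Theorem \ref{th1.4}. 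By linearity of $S_T$, $P(V_1)-P(V_2)=S_T(h\ast A(V_1-V_2),0,-h\ast B(V_1-V_2))$, and combining Proposition \ref{pr1.3}, Lemma \ref{le1.7}, Lemma \ref{le1.3}\,(I) with the Cauchy--Schwarz bound $\|h\|_{L^1(0,\tau)}\le\tau^{1/2}\|h\|_{L^2(0,\tau)}$ yields
$$
\|P(V_1)-P(V_2)\|_{X_\tau}\le C\,\tau^{1/2}\,\|h\|_{L^2(0,T)}\,\|V_1-V_2\|_{X_\tau}.
$$
Choosing $\tau$ small makes $P$ a contraction, so (\ref{eq3.34}) has a unique local solution; the passage to the whole of $[0,T]$ is achieved exactly as in Theorem \ref{th6.6}, splicing successive intervals via Proposition \ref{pr1.2} and Lemma \ref{le1.8}, and global uniqueness follows by the connectedness argument of Lemma \ref{le3.3}.

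For (II), the genuine obstacle is circularity: the weak solution furnishes only $Av\in H^{1/4,1/2}(Q_\tau)'$, so $h\ast Av$ is \emph{not} in $L^2(Q_\tau)$ and Proposition \ref{pr1.4} cannot be applied to $v$ directly. I would bypass this by defining the \emph{same} map $Q(V):=S_T(f+h\ast AV,\,v_0,\,g-h\ast BV)$ but now on $H^{1,2}(Q_\tau)$, where the improved data pay off. Indeed, for $V\in H^{1,2}(Q_\tau)$ one has $AV\in L^2(Q_\tau)$, whence $h\ast AV\in L^2(Q_\tau)$ by Young; and $BV\in H^{1/4,1/2}(\Sigma_\tau)$ by the trace Theorem \ref{th1.1}\,(II) (with $\alpha=1$, $\beta=2$, $j=1$), so, since the temporal exponent $1/4$ lies in $(0,1/2)$, Lemma \ref{le1.6} applied to the factor $H^{1/4}(0,\tau;L^2(\Gamma))$ together with Young for the factor $L^2(0,\tau;H^{1/2}(\Gamma))$ gives $h\ast BV\in H^{1/4,1/2}(\Sigma_\tau)$. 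Thus $f+h\ast AV\in L^2(Q_\tau)$, $v_0\in H^1(\Omega)$ and $g-h\ast BV\in H^{1/4,1/2}(\Sigma_\tau)$, so Proposition \ref{pr1.4} shows $Q(V)\in H^{1,2}(Q_\tau)$ and, by the same two estimates, $\|Q(V_1)-Q(V_2)\|_{H^{1,2}(Q_\tau)}\le C\tau^{1/2}\|h\|_{L^2(0,T)}\|V_1-V_2\|_{H^{1,2}(Q_\tau)}$. For $\tau$ small $Q$ has a unique fixed point $\tilde v\in H^{1,2}(Q_\tau)\subset X_\tau$ solving (\ref{eq3.34}); by the uniqueness in $X_\tau$ from part (I) we conclude $\tilde v=v$, hence $v\in H^{1,2}(Q_\tau)$.

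Finally, I would upgrade this local regularity to $[0,T]$ by iteration. On $[\tau,2\tau]$, setting $v_1(t):=v(\tau+t)$ and using the decomposition of Lemma \ref{le1.8}\,(III), the history contribution $(\widetilde h_0\ast\widetilde{Av})(\tau+\cdot)$ becomes a \emph{known} forcing term: since $v\in H^{1,2}(Q_\tau)$ on the first interval gives $Av\in L^2(Q_\tau)$ and $Bv\in H^{1/4,1/2}(\Sigma_\tau)$, Young's inequality (for the $L^2$ source) and Lemma \ref{le1.1}\,(II) together with Lemma \ref{le1.6} (for the $H^{1/4,1/2}$ boundary datum) show these history terms have exactly the regularity required by part (II). The problem for $v_1$ is then of the form (\ref{eq3.34}) with data satisfying the hypotheses of (II), so the local $H^{1,2}$ result applies again, and after finitely many steps we obtain $v\in H^{1,2}(Q_T)$. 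The hard part is precisely the circularity in (II), which is resolved by this detour through a separate $H^{1,2}$-contraction combined with the $X_\tau$-uniqueness already established in (I).
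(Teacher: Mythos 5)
Your proposal is correct and follows essentially the same route as the paper: part (I) is the contraction-plus-splicing argument that the paper leaves to the reader as a ``simplified variation'' of Lemma \ref{le3.4}, and part (II) is exactly the paper's second contraction in $H^{1,2}(Q_\delta)$ built on Proposition \ref{pr1.4}, Lemma \ref{le1.6} and the trace theorem, extended to $[0,T]$ by the Lemma \ref{le1.8} iteration. Your explicit identification of the $H^{1,2}$ fixed point with the weak solution via the $X_\tau$-uniqueness of part (I) is a step the paper leaves implicit, but the argument is the same.
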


\begin{proof} The proof of (I) is a simplified variation of the proof of Lemma \ref{le3.4} and we leave it to the reader.

Concerning (II), we follow again the idea contained in the proof of Lemma \ref{le3.4}. Let $\delta \in (0, T]$ and set
\begin{equation}
Z_\delta := \{V \in H^{1,2}(Q_\delta) : V(0,\cdot) = v_0\},
\end{equation}
which is a closed subset of $H^{1,2}(Q_\delta)$.  If $V \in Z_\delta$, we consider the solution $v \in Z_\delta$ of
$$
\left\{\begin{array}{l}
D_tv(t,x)   = Av(t,x) + h \ast AV(t,x) + f(t,x), \quad  (t,x) \in Q_\delta,  \\ \\
v(0,x) = v_0(x),  \quad x \in  \Omega\\ \\
Bv(t,y)  = - h \ast BV(t,y) Ê
+ g (t,y), \quad (t,y) \in \Sigma_\delta.
\end{array}
\right.
$$
We observe that $h \ast AV \in L^2(Q_\delta)$ and, on account of Theorem \ref{th1.1} (II) and Lemma \ref{le1.6}, we have also
$h \ast BV \in H^{1/4,1/2}(\Sigma_\delta)$. We can show that, if $\delta$ is sufficiently small, the mapping $V \to v$ has a unique fixed point.
In fact, considering $V_j \to v_j$ ($j \in \{1,2\}$), then we have
$$
\left\{\begin{array}{l}
D_t(v_1 - v_2)(t,x)   = A(v_1 - v_2)(t,x) + h \ast A(V_1 - V_2)(t,x), \quad  (t,x) \in Q_\delta,  \\ \\
(v_1 - v_2)(0,x) = 0, \quad x \in \Omega  \\ \\
B(v_1 - v_2)(t,y)  = - h \ast B(V_1 - V_2)(t,y), \quad (t,y) \in \Sigma_\delta.
\end{array}
\right.
$$
So that, if we indicate by $C_1, C_2, ...$ some positive constants which are independent of $\delta$, $V_1$, $V_2$, by an application
of Proposition \ref{pr1.4} and Lemma \ref{le1.6} we deduce
$$
\begin{array}{c}
\|v_1 - v_2\|_{H^{1,2}(Q_\delta)} \leq C_1(\|h \ast A(V_1 - V_2)\|_{L^2(Q_\delta)} + \| h \ast B(V_1 - V_2)\|_{H^{1/4,1/2}(\Sigma_\delta)}) \\ \\
\leq C_2\|h\|_{L^1(0, \delta)} (\|V_1 - V_2\|_{L^2(0, \delta; H^2(\Omega))} +  \|B(V_1 - V_2)\|_{H^{1/4,1/2}(\Sigma_\delta)}) \\ \\
\leq C_3 \delta^{1/2}Ê\|V_1 - V_2\|_{H^{1,2}(Q_\delta)}.
\end{array}
$$
Choosing $\delta >0$ such that $C_3 \delta^{1/2}Ê < 1$, then problem (\ref{eq3.34}) admits a unique solution $v$ in $H^{1,2}(Q_\delta)$.
We observe that $\delta$ is independent of $f$, $v_0$, $g$. In case $\delta < T$, in order to extend $v$ we employ again Lemma \ref{le1.8}.
Taking as new unknown  $v_1(t,\cdot):= v(\delta + t, \cdot)$, then $v_1$ should satisfy
$$
\left\{\begin{array}{l}
D_tv_1(t,x)   = Av_1(t,x) + h \ast Av_1(t,x) + [h \ast {\widetilde A}v_{|Q_\delta}] (t+T,x) + f(t+\delta,x), \quad  (t,x) \in Q_\tau,  \\ \\
v_1(0,x) = v(\delta,x),  \quad x \in \Omega\\ \\
Bv_1(t,y)  = - h \ast Bv_1(t,y) Ê- [h \ast {\widetilde B}v_{|Q_\delta}] (t+T,y) + g(t+\delta,y), \quad (t,y) \in \Sigma_\tau,
\end{array}
\right.
$$
which has the same structure of (\ref{eq3.34}), when we replace $f(t,x)$ by $[h \ast {\widetilde A}v_{|Q_\delta}] (t+T,x) + f(t+\delta,x)$,
$v_0$ by $v(\delta,\cdot)$ and $g(t,y)$ by Ê$- [h \ast {\widetilde B}v_{|Q_\delta}] (t+T,y) + g(t+\delta,y)$.
Hence, we can extend $v$ to a solution of domain $Q_{\min\{2\delta,T\}}$. If $2\delta < T$, we iterate the procedure and, in a finite number
of steps, we get the proof.
\end{proof}

In order to conclude, we need some auxiliary results.

\begin{lemma}\label{le2.2}
$BV([0, T]) \hookrightarrow H^\beta(0, T)$, $\forall \, \beta \in [0, 1/2)$.
\end{lemma}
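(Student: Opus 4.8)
The plan is to reduce the Gagliardo seminorm of a $BV$ function to its $L^1$ modulus of continuity, which for functions of bounded variation is controlled linearly by the size of the increment. The case $\beta = 0$ is immediate, since every $f \in BV([0,T])$ is bounded, with $\|f\|_{L^\infty(0,T)} \le C\|f\|_{BV([0,T])}$ (from $|f(x)| \le |f(y)| + V(f)$ and integration in $y$, $V(f)$ denoting the total variation), hence $f \in L^2(0,T)$. So I restrict to $\beta \in (0,1/2)$ and work with the norm $\|\cdot\|'_{H^\beta(0,T)}$ from (\ref{norm1}). The $L^2$-part is controlled trivially by $\|f\|_{L^2(0,T)}^2 \le T\|f\|_{L^\infty(0,T)}^2$, so the whole matter rests on the seminorm.

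For the seminorm, I would exploit boundedness once: since $|f(t)-f(s)| \le 2\|f\|_{L^\infty(0,T)}$, one has $|f(t)-f(s)|^2 \le 2\|f\|_{L^\infty(0,T)}|f(t)-f(s)|$, whence
$$[f]_{H^\beta(0,T)}^2 \le 2\|f\|_{L^\infty(0,T)} \int_0^T \int_0^t \frac{|f(t)-f(s)|}{(t-s)^{1+2\beta}}\, ds\, dt .$$
After the substitution $r = t-s$ and Fubini's theorem the double integral becomes
$$\int_0^T r^{-1-2\beta}\Bigl(\int_r^T |f(t)-f(t-r)|\, dt\Bigr) dr .$$

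The key step, and the one I expect to carry the entire argument, is the $L^1$ modulus-of-continuity estimate for $BV$ functions,
$$\int_r^T |f(t)-f(t-r)|\, dt \le r\, V(f), \qquad 0 < r < T .$$
I would prove this by passing to the right-continuous representative of $f$ (all representatives agree off a countable set, so the $L^2$ and $H^\beta$ norms are unaffected) and writing the increment via the associated signed Lebesgue--Stieltjes measure $\mu = Df$, namely $f(t)-f(t-r) = \mu((t-r,t])$ with $|\mu|([0,T]) = V(f)$. Interchanging the order of integration then gives
$$\int_r^T |\mu|\bigl((t-r,t]\bigr)\, dt = \int d|\mu|(\xi)\; \bigl|\{t \in (r,T): \xi \in (t-r,t]\}\bigr| \le r\,|\mu|([0,T]) = r\,V(f),$$
since for each fixed $\xi$ the admissible set of $t$ has length at most $r$.

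Inserting this bound yields
$$[f]_{H^\beta(0,T)}^2 \le 2\|f\|_{L^\infty(0,T)} V(f) \int_0^T r^{-2\beta}\, dr = \frac{2\,T^{1-2\beta}}{1-2\beta}\,\|f\|_{L^\infty(0,T)} V(f),$$
where the integral converges precisely because $\beta < 1/2$. Combining with the $L^2$-bound and with $\|f\|_{L^\infty(0,T)} \le C\|f\|_{BV([0,T])}$ gives $\|f\|'_{H^\beta(0,T)} \le C(T,\beta)\|f\|_{BV([0,T])}$, which is the claimed continuous embedding. The only genuinely delicate point is the measure-theoretic justification of the modulus estimate (choice of representative and the Stieltjes representation of the increment); everything else is an elementary application of Fubini's theorem together with the integrability of $r^{-2\beta}$ near $r=0$.
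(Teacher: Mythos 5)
Your proof is correct, but it follows a genuinely different route from the paper's. The paper reduces the statement to the known embedding $W^{1,1}(0,T)\hookrightarrow H^\beta(0,T)$ (Simon's Theorem 10): it extends $f$ to $\R$ by constants, mollifies to get $f_k\in W^{1,1}$ with $\|f_k'\|_{L^1}\le V(F)\|\omega\|_{L^1}$, and then passes to the limit in the Gagliardo seminorm by Fatou's lemma, using lower semicontinuity under a.e.\ convergence. You instead estimate the seminorm directly: the interpolation $|f(t)-f(s)|^2\le 2\|f\|_{L^\infty}|f(t)-f(s)|$ reduces everything to the $L^1$ modulus of continuity, and the key bound $\int_r^T|f(t)-f(t-r)|\,dt\le rV(f)$ is obtained from the Lebesgue--Stieltjes representation of the increment of the right-continuous representative plus Fubini. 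All the steps check out: passing to the right-continuous representative changes $f$ only on a countable (hence Lebesgue-null) set and does not increase the variation, the set $\{t:\xi\in(t-r,t]\}$ indeed has length at most $r$, and $\int_0^T r^{-2\beta}\,dr<\infty$ exactly for $\beta<1/2$. What your approach buys is self-containedness and an explicit constant $C(T,\beta)\sim(1-2\beta)^{-1/2}$, at the cost of a small measure-theoretic detour; the paper's approach is shorter on the page but outsources the analytic core to the cited embedding $W^{1,1}\hookrightarrow H^\beta$ and still has to handle the same issue (controlling the variation of the approximants) through the mollification estimate. Either argument serves the purposes of Corollary \ref{co2.2} equally well.
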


\begin{proof} By Theorem 10 in \cite{Si1}, we have $W^{1,1}(0, T) \hookrightarrow H^\beta(0, T)$, $\forall \, \beta \in [0, 1/2)$.
Moreover, it is well known that $W^{1,1}(0, T) \hookrightarrow BV([0, T])$ and that  $V_0^T(f) = \int_0^T |f'(t)| dt$, $\forall \, f \in W^{1,1}(0, T)$. Now, let us take $f \in BV([0, T])$.
Extending $f$ to $F:\R \to \C$ by $F(t) = f(0)$ if $t < 0$ and $F(t) = f(T)$ if $t > T$, we obtain
 $F \in BV(\R)$, with variation
$$V(F) = |f(0)| + V_0^T (f) + |f(T)|.$$
Now we fix $\omega \in \mathcal D (\R)$ with $\int_\R \omega (t) dt = 1$, and set, for $k \in \N$, $t \in \R$, $\omega_k(t) := k \omega(kt)$. $\{\omega_k\}_{k \in \N}$ is a sequence of standard  smooth
mollifiers converging to $\delta$ in the sense of distributions. Taking $f_k = (F \ast \omega_k)_{|[0, T]}$, we have $f_k \in W^{1,1}(0, T)$ , $\|f_k - f\|_{L^1((0, T))} \to 0$ ($k \to \infty$), so that,
(possibly passing to a subsequence)
$$
f_k(t) \to f(t) \quad (k \to \infty) \quad \text{almost everywhere in } [0,T].
$$
If $0 = t_0 < t_1 < \dots ... < t_{N-1} < t_N = T$, we have
$$
\begin{array}{c}
\sum_{j=1}^N |f_k(t_j) - f_k(t_{j-1})| \leq \int_\R \sum_{j=1}^N |F(t_j - s) - F(t_{j-1} - s)| |\omega_k(s)| ds \\ \\
\leq V(F) \|\omega_k\|_{L^1(\R)} =  V(F) \|\omega\|_{L^1(\R)},
\end{array}
$$
implying
$$
\|f_k'\|_{L^1((0, T)} = V_0^T(f_k) \leq V(F) \|\omega\|_{L^1(\R)} \quad \forall k \in \N.
$$

So, if $\beta \in [0, 1/2)$, by the lemma of Fatou, we have
$$
\begin{array}{c}
[f]_{H^\beta(0,T)} = \int_0^T \int_0^t \frac{|f(t) - f(s)|^2}{(t-s)^{1+2\beta}} ds dt
\leq \liminf_{k \to \infty}Ê\int_0^T \int_0^t \frac{|f_k(t) - f_k(s)|^2}{(t-s)^{1+2\beta}} ds dt  \\ \\
= \liminf_{k \to \infty} [f_k]_{H^\beta(0,T)} \leq C(\beta)  \liminf_{k \to \infty} \|f_k\|_{W^{1,1}((0, T))}Ê\\ \\
\leq C(\beta) (\|f\|_{L^1((0, T))} + V(F) \|\omega\|_{L^1(\R)}).
\end{array}
$$
The conclusion follows.
\end{proof}

\begin{lemma}\label{le2.3}
Let $V$ be a Hilbert space and $0 < \beta < \alpha < 1$. Assume $\phi \in H^\beta(0, T)$ and $f \in C^\alpha([0, T]; V)$,
or $\phi \in C^\alpha([0, T])$ and $f \in H^\beta(0, T; V)$. Then $\phi f \in  H^\beta(0, T; V)$.
\end{lemma}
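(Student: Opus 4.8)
The plan is to reduce everything to the Gagliardo-type characterization of $H^\beta(0,T;V)$ recalled just before \eqref{norm1}: for $\beta\in(0,1)$ one has $g\in H^\beta(0,T;V)$ if and only if $g\in L^2(0,T;V)$ and the seminorm $[g]_{H^\beta(0,T;V)}$ is finite, the norm being $\|g\|_{L^2(0,T;V)}^2+[g]_{H^\beta(0,T;V)}^2$. Thus it suffices to bound these two quantities for $g=\phi f$. In both cases one of the two factors is H\"older continuous on the compact interval $[0,T]$, hence continuous and therefore bounded, so the $L^2$ bound is immediate: in the case $\phi\in C^\alpha([0,T])$ one has $\|\phi f\|_{L^2(0,T;V)}\le\|\phi\|_{C([0,T])}\|f\|_{L^2(0,T;V)}$, and in the case $f\in C^\alpha([0,T];V)$ one has $\|\phi f\|_{L^2(0,T;V)}\le\|f\|_{C([0,T];V)}\|\phi\|_{L^2(0,T)}$; both are finite. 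The real content of the proof is the finiteness of the seminorm.

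For the seminorm I would use the elementary splitting
\[
\phi(t)f(t)-\phi(s)f(s)=\phi(t)\bigl(f(t)-f(s)\bigr)+\bigl(\phi(t)-\phi(s)\bigr)f(s),
\]
so that, after the triangle inequality and $(a+b)^2\le2a^2+2b^2$, one gets $[\phi f]_{H^\beta(0,T;V)}^2\le2I_1+2I_2$ with
\[
I_1:=\int_0^T\!\!\int_0^t\frac{|\phi(t)|^2\,\|f(t)-f(s)\|_V^2}{(t-s)^{1+2\beta}}\,ds\,dt,
\qquad
I_2:=\int_0^T\!\!\int_0^t\frac{|\phi(t)-\phi(s)|^2\,\|f(s)\|_V^2}{(t-s)^{1+2\beta}}\,ds\,dt.
\]
In the case $\phi\in C^\alpha([0,T])$, $f\in H^\beta(0,T;V)$ the two terms are handled directly: in $I_1$ I bound $|\phi(t)|^2\le\|\phi\|_{C([0,T])}^2$ and recognize the remainder as $\|\phi\|_{C([0,T])}^2\,[f]_{H^\beta(0,T;V)}^2$; in $I_2$ I use $|\phi(t)-\phi(s)|\le[\phi]_{C^\alpha([0,T])}(t-s)^\alpha$, so the kernel becomes $(t-s)^{2\alpha-1-2\beta}$, and after exchanging the order of integration the inner integral $\int_s^T(t-s)^{2\alpha-1-2\beta}\,dt$ is finite and bounded by $T^{2(\alpha-\beta)}/(2(\alpha-\beta))$, giving $I_2\le C[\phi]_{C^\alpha([0,T])}^2\|f\|_{L^2(0,T;V)}^2$. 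Symmetrically, in the case $\phi\in H^\beta(0,T)$, $f\in C^\alpha([0,T];V)$ I bound $\|f(s)\|_V\le\|f\|_{C([0,T];V)}$ in $I_2$, obtaining $\|f\|_{C([0,T];V)}^2[\phi]_{H^\beta(0,T)}^2$, and use $\|f(t)-f(s)\|_V\le[f]_{C^\alpha([0,T];V)}(t-s)^\alpha$ in $I_1$, so that again the kernel reduces to $(t-s)^{2\alpha-1-2\beta}$ and integration in $s$ yields the finite factor $t^{2(\alpha-\beta)}/(2(\alpha-\beta))$ multiplying $|\phi(t)|^2$, whence $I_1\le C[f]_{C^\alpha([0,T];V)}^2\|\phi\|_{L^2(0,T)}^2$.

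The single point that requires care, and is really the crux, is that in the mixed term where the H\"older factor contributes its increment one must integrate the reduced kernel $(t-s)^{2\alpha-1-2\beta}$ against the diagonal; this is locally integrable precisely because $2\alpha-1-2\beta>-1$, that is, because $\alpha>\beta$, which is exactly the hypothesis. I would stress that when $\beta\le1/2$ the factor $\phi\in H^\beta(0,T)$ need not belong to $L^\infty$, so in the first case one cannot extract $|\phi(t)|$ from $I_1$ as a supremum; the argument instead shifts the entire burden onto the H\"older regularity of $f$, and the surviving $\int_0^T|\phi(t)|^2\,dt$ is controlled by $\|\phi\|_{L^2(0,T)}^2$. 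Hence no boundedness of $\phi$ is ever needed, and the condition $\alpha>\beta$ is invoked exactly once, to secure the integrability of the reduced kernel.
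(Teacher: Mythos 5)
Your proof is correct and follows essentially the same route as the paper's: the same splitting of $\phi(t)f(t)-\phi(s)f(s)$ into a term carrying the increment of the H\"older factor and a term carrying the increment of the Sobolev factor, with the H\"older seminorm reducing the kernel to $(t-s)^{2(\alpha-\beta)-1}$, integrable precisely because $\alpha>\beta$, and the remaining factor controlled in $L^2$. The only (harmless) differences are that you also record the easy $L^2(0,T;V)$ bound and write out both cases explicitly, whereas the paper computes only the seminorm in the case $\phi\in H^\beta$, $f\in C^\alpha$ and declares the other case analogous.
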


\begin{proof} We prove only the first case, the other case can be treated analogously. Indeed, there holds
$$
\begin{array}{c}
|f|_{H^\beta(0,T;V)} = \int_0^T (\int_0^t \frac{\|\phi(t) f(t) - \phi(s) f(s)\|_V^2}{(t-s)^{1+2\beta}} \\ \\
\leq 2\left [\int_0^T |\phi(t)|^2 (\int_0^t \frac{\| f(t) -  f(s)\|_V^2}{(t-s)^{1+2\beta}} ds) dt
+ \int_0^T (\int_0^t \frac{| \phi (t) -  \phi(s)|^2}{(t-s)^{1+2\beta}} ds) \|f(t)\|_V^2 dt \right] \\ \\
\leq 2\Big[\int_0^T |\phi(t)|^2 ( [f]_{C^\alpha([0, T]; V)}^2 \int_0^t (t-s)^{2(\alpha - \beta) - 1}Êds ) dt
\\ \\
+ \|f\|_{L^\infty(0, T; V)}^2 \int_0^T (\int_0^t \frac{| \phi (t) -  \phi(s)|^2}{(t-s)^{1+2\beta}} ds) dt \Big] \\ \\
\leq  2\left[ \frac{T^{2(\alpha - \beta)}}{2(\alpha - \beta)}Ê[f]_{C^\alpha([0, T]; V)}^2 \int_0^T |\phi(t)|^2  dt
+ \|f\|_{L^\infty(0, T; V)}^2 \int_0^T (\int_0^t \frac{| \phi (t) -  \phi(s)|^2}{(t-s)^{1+2\beta}} ds) dt \right].
\end{array}
$$
\end{proof}

\begin{corollary}\label{co2.2}
Assume (C1)-(C3) and $\tau  \in (0, T]$. For any $v \in H^{3/4,3/2}(Q_\tau)$, then $\Psi(v) \in H^{1/4,1/2}(\Sigma_\tau)$.
\end{corollary}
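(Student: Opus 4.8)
The plan is to exploit the explicit representation of $\Psi(v)$ recorded above,
\[
\Psi(v)(t,y) = D_t(E_1 \ast r_v)(t)\, u_A(t,y) + (E_1 \ast r_v)(t)\, D_t u_A(t,y) + D_t E_0(t,y),
\]
with $r_v(t) = \mW(\mM u_0 + 1 \ast \mM v)(t)$, and to show separately that each of the three summands lies in $H^{1/4,1/2}(\Sigma_\tau) = H^{1/4}(0,\tau; L^2(\Gamma)) \cap L^2(0,\tau; H^{1/2}(\Gamma))$. The membership in $L^2(0,\tau; H^{1/2}(\Gamma))$ is the easy half: in each product the factor depending only on $t$ is continuous, hence bounded on $[0,\tau]$, while the accompanying factor ($u_A$, $D_t u_A$, $u_B$, $D_t u_B$) already belongs to $L^2(0,\tau; H^{1/2}(\Gamma))$ by (H8). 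Thus the real content is the time-regularity, i.e. membership in $H^{1/4}(0,\tau; L^2(\Gamma))$.

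First I would settle the regularity of the scalar functions of $t$. Since $v \in H^{3/4,3/2}(Q_\tau)$, the trace result Theorem \ref{th1.1} (II) gives $v_{|\Sigma_\tau} \in H^{1/2,1}(\Sigma_\tau)$, so by (H8) $\mM v \in H^{1/2}(0,\tau)$, whence $1 \ast \mM v \in H^{3/2}(0,\tau)$ and $\mM(u_0 + 1 \ast v) \in H^{3/2}(0,\tau) \hookrightarrow C([0,\tau]) \cap BV([0,\tau])$, as already observed in Section \ref{seA5}. By (C1) then $r_v \in C([0,\tau]) \cap BV([0,\tau])$, and Lemma \ref{le2.2} upgrades this to $r_v \in H^{1/4}(0,\tau)$. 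The decisive identity is
\[
D_t(E_1 \ast r_v) = \epsilon^{-1}\bigl(r_v - E_1 \ast r_v\bigr),
\]
which follows from $E_1(0) = \epsilon^{-1}$ and $E_1' = -\epsilon^{-1} E_1$; it shows both that $E_1 \ast r_v \in C^1([0,\tau])$ and, crucially, that $D_t(E_1 \ast r_v) \in H^{1/4}(0,\tau)$, so differentiating the convolution costs no time-regularity.

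Next I would handle the three summands. For the first two I would invoke Lemma \ref{le2.3}: the time factors $D_t(E_1 \ast r_v)$ and $E_1 \ast r_v$ lie respectively in $H^{1/4}(0,\tau)$ and in $C^1([0,\tau]) \subset C^\alpha([0,\tau])$, whereas $u_A \in H^{5/4}(0,T; L^2(\Gamma)) \hookrightarrow C^\alpha([0,\tau]; L^2(\Gamma))$ and $D_t u_A \in H^{1/4}(0,T; L^2(\Gamma))$ for a fixed $\alpha \in (1/4,1)$ (one may take $\alpha = 3/8$, using $H^{5/4} \subset H^{7/8} \hookrightarrow C^{3/8}$). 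In both products the hypotheses $0 < 1/4 < \alpha < 1$ of Lemma \ref{le2.3} hold, so the products belong to $H^{1/4}(0,\tau; L^2(\Gamma))$. For the third summand I would differentiate $E_0$ explicitly: setting $p(t) := (E_1 \ast u_C)(t) + \epsilon \phi_0 E_1(t)$, one has $D_t E_0 = (D_t p)\, u_A + p\, D_t u_A + D_t u_B$, and exactly as above $D_t p = \epsilon^{-1}(u_C - E_1 \ast u_C) + \epsilon \phi_0 E_1' \in H^{1/4}(0,\tau)$ (using $u_C \in C([0,\tau]) \cap BV([0,\tau])$ from (H8) and Lemma \ref{le2.2}), while $p \in C^1([0,\tau])$; the first two products are again covered by Lemma \ref{le2.3}, and $D_t u_B \in H^{1/4}(0,T; L^2(\Gamma)) \cap L^2(0,T; H^{1/2}(\Gamma))$ directly by (H8). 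Collecting these memberships yields $\Psi(v) \in H^{1/4,1/2}(\Sigma_\tau)$.

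The main obstacle is the $H^{1/4}$-in-time control of the products, given that the memory operator $\mW$ supplies only $C \cap BV$ regularity and nothing better. The crux is therefore threefold: (i) Lemma \ref{le2.2}, which converts the $BV$ bound into exactly $H^{1/4}$; (ii) the identity $D_t(E_1 \ast r_v) = \epsilon^{-1}(r_v - E_1 \ast r_v)$, which keeps the time derivative from destroying this regularity; and (iii) the verification that the H\"older exponent of $u_A$ and of the smooth factors strictly exceeds $1/4$, so that the product Lemma \ref{le2.3} is applicable. The remaining $L^2(0,\tau; H^{1/2}(\Gamma))$ component is the routine bounded-scalar-times-$L^2$-vector estimate.
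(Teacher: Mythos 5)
Your argument is correct and follows essentially the same route as the paper: the same decomposition of $\Psi(v)$ into products of scalar time factors and $\Gamma$-valued factors, with Lemma \ref{le2.2} converting the $C\cap BV$ regularity of $r_v$ (and of $D_t(E_1\ast r_v)$, which you obtain via the identity $D_t(E_1\ast r_v)=\epsilon^{-1}(r_v-E_1\ast r_v)$ implicit in the paper's earlier remark) into $H^{1/4}$, and Lemma \ref{le2.3} handling the products against $u_A\in C^\alpha([0,T];L^2(\Gamma))$ with $\alpha>1/4$. You are merely more explicit than the paper, which compresses these steps into one sentence; no discrepancy in substance.
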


\begin{proof} We recall that $\Psi(v)(t,y) = D_t[(E_1 * r_v)(t)u_A(t,y) + E_0(t,y)]\,$ where
$r_v := \mW (\mM (u_0 + 1 \ast v)) \in C([0, \tau]) \cap BV([0, \tau])$ and
$E_0(t,y) = [ (E_1 \ast u_C)(t) + \epsilon \phi_0 E_1(t)]Êu_A(t,y) + u_B(t,y)$.
On account of (H8) and (H9), there hold  $D_t (E_1 \ast r_v), \, [(E_1 \ast u_C)(t) + \epsilon \phi_0 E_1(t)] \in C([0, \tau]) \cap BV([0, \tau])$ and, moreover,
$$u_A \in H^{5/4}(0, T; L^2(\Gamma)) \hookrightarrow H^{1}(0, T; L^2(\Gamma))  \hookrightarrow C^{1/2}([0, T]; L^2(\Gamma)),
\quad u_B\in H^{5/4}(0, T; L^2(\Gamma)).$$
So, an applications of  Lemmas \ref{le2.2}Ê and \ref{le2.3} gives $\Psi(v) \in H^{1/4}(0, T; L^2(\Gamma))$. Moreover, using again (H8) and (H9), it is
easy to realize that $\Psi(v) \in L^2(0, T;H^{1/2}(\Gamma))$.
\end{proof}

Now we are able to prove the following final result, which completes the proof of Theorem \ref{thA2.1}.

\begin{theorem}
Assume (K1)-(K6), and, moreover, $v^* \in L^2(Q_T)$, $v_0 \in H^1(\Omega)$, $z_1 \in H^{1/2}(\partial \Omega)$, $v^*_\Gamma \in H^{1/4,1/2}(\Sigma_T)$.
Then the solution $(v,h) \in X_T \times L^2(0, T)$ of (\ref{eqA2.6})  belongs, in fact, to $H^{1,2}(Q_T) \times L^2(0, T)$.
\end{theorem}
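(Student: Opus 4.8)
The plan is to observe that, once the weak solution $(v,h) \in X_T \times L^2(0,T)$ of (\ref{eqA2.6}) is available, the nonlocal and nonlinear terms can be frozen, so that $v$ solves a \emph{linear} problem of exactly the type (\ref{eq3.34}) studied in Lemma \ref{le3.5}. Concretely, I would set
\begin{equation}
f := v^* - [(\psi_1,v) + h \ast (\psi_1,v)]\,z_0, \qquad
g := -v_{|\Sigma_T} + \Psi(v) + [(\psi_1,v) + h \ast (\psi_1,v)]\,z_1 + v^*_\Gamma,
\end{equation}
which are now fixed functions, and note that $(v,h)$ solves (\ref{eq3.34}) with these data. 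Since by Lemma \ref{le3.5} (I) such a solution is unique in $X_T$, it will suffice to check that the strengthened hypotheses force $f \in L^2(Q_T)$ and $g \in H^{1/4,1/2}(\Sigma_T)$ (recall $v_0 \in H^1(\Omega)$ is assumed), and then to apply Lemma \ref{le3.5} (II) to conclude $v \in H^{1,2}(Q_T)$.

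Checking $f$ is routine: $v^* \in L^2(Q_T)$ by hypothesis, while $v \in H^{3/4,3/2}(Q_T)$ gives $(\psi_1,v) \in H^{3/4}(0,T)$, hence $h \ast (\psi_1,v) \in L^2(0,T)$ by Young's inequality; multiplying the resulting $L^2(0,T)$ scalar by $z_0 \in L^2(\Omega)$ lands in $L^2(Q_T)$. For $g$ I would argue termwise. The trace $v_{|\Sigma_T}$ lies in $H^{1/2,1}(\Sigma_T) \hookrightarrow H^{1/4,1/2}(\Sigma_T)$ by Theorem \ref{th1.1} (II) with $j=0$; the memory term satisfies $\Psi(v) \in H^{1/4,1/2}(\Sigma_T)$ by Corollary \ref{co2.2}; and $v^*_\Gamma \in H^{1/4,1/2}(\Sigma_T)$ by hypothesis. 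For the product term, I would set $\phi := (\psi_1,v) + h \ast (\psi_1,v)$ and note $(\psi_1,v) \in H^{3/4}(0,T) \hookrightarrow H^{1/4}(0,T)$ together with $h \ast (\psi_1,v) \in H^{1/4}(0,T)$ from Lemma \ref{le1.6} (taking $\beta = 1/4$), so $\phi \in H^{1/4}(0,T)$; since $z_1 \in H^{1/2}(\Gamma)$, the factorised estimates $\|\phi z_1\|_{H^{1/4}(0,T;L^2(\Gamma))} \le \|\phi\|_{H^{1/4}(0,T)}\|z_1\|_{L^2(\Gamma)}$ and $\|\phi z_1\|_{L^2(0,T;H^{1/2}(\Gamma))} \le \|\phi\|_{L^2(0,T)}\|z_1\|_{H^{1/2}(\Gamma)}$ give $\phi z_1 \in H^{1/4,1/2}(\Sigma_T)$.

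The hard part will be this last step, namely upgrading the boundary datum $g$ from $L^2(\Sigma_T)$—which is all that is guaranteed at the level of $X_T$—to $H^{1/4,1/2}(\Sigma_T)$. It relies on the sharp trace inequality of Theorem \ref{th1.1} (II), on the gain of $H^{1/4,1/2}$-regularity for $\Psi(v)$ provided by Corollary \ref{co2.2}, and on the $H^{1/4}(0,T)$-stability of the convolution from Lemma \ref{le1.6}; the strengthened assumption $z_1 \in H^{1/2}(\Gamma)$ is exactly what is needed to keep the product $\phi z_1$ in the right space. Once $f$, $v_0$ and $g$ are in the classes required by Lemma \ref{le3.5} (II), that lemma delivers $v \in H^{1,2}(Q_T)$ and the proof is complete.
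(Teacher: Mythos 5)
Your proposal is correct and follows essentially the same route as the paper: freeze the nonlocal terms to recast $v$ as the $X_T$-solution of the linear system (\ref{eq3.34}) with exactly the data $f$ and $g$ you write down, verify $f \in L^2(Q_T)$ and $g \in H^{1/4,1/2}(\Sigma_T)$ using Corollary \ref{co2.2} and Lemma \ref{le1.6}, and conclude via Lemma \ref{le3.5} (II). Your verification of the product term $[(\psi_1,v) + h \ast (\psi_1,v)]\,z_1$ and the explicit appeal to uniqueness in $X_T$ are slightly more detailed than the paper's, but the argument is the same.
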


\begin{proof} Indeed, $v$ is the solution in $X_T$ of (\ref{eq3.34}), if we set
$$
f(t,x):=v^*(t,x)
 - [(\psi_1,v(t,\cdot)) + h \ast (\psi_1, v(t,\cdot))]Êz_0(x),
$$
and
$$
g(t,y):= - v_{|\Sigma_T}(t,y) + \Psi(v)(t,y) +    [(\psi_1,v(t,\cdot)) + h \ast (\psi_1, v(t,\cdot))] z_1(y)Ê\\ \\
+ v^*_\Gamma (t,y).
$$
Clearly, as $v^* \in L^2(Q_T)$, then $f \in L^2(Q_T)$. Moreover, as $v \in H^{3/4, 3/2}(Q_T)$ then $v_{|\Sigma_T} \in H^{1/2,1}(\Sigma_T)$.
By Corollary \ref{co2.2}, we have $\Psi(v) \in H^{1/4,1/2}(\Sigma_T)$.
Thanks to Lemma \ref{le1.6}, it holds $ (\psi_1,v(t,\cdot)) + h \ast (\psi_1, v(t,\cdot)) \in H^{1/4}(0, T)$, so
that $ [(\psi_1,v(t,\cdot)) + h \ast (\psi_1, v(t,\cdot))] z_1(y) \in H^{1/4,1/2}(\Sigma_T)$.
Hence, as $v^*_\Gamma \in H^{1/4,1/2}(\Sigma_T)$, we deduce $g \in H^{1/4,1/2}(\Sigma_T)$.
And finally from Lemma \ref{le3.5} (II) it follows that $v \in H^{1,2}(Q_T)$.
\end{proof}

\begin{remark}\label{ref}
{\rm Examples of memory operators $\mathcal W$ satisfying (C1) and (C3) are Preisach operators and generalized play operators
(with fixed initial value of the output), under suitable conditions.

Concerning Preisach operators, conditions implying that, $\forall \, \tau \in [0, T]$, $\mathcal W_\tau: C([0, \tau]) \to C([0, \tau])$
in a uniformly Lipschitz way are given in \cite{Vi1}, Chap. IV, Thms 3.1 and 3.5. Analogous
results for generalized play operators can be found again in \cite{Vi1}, Chap. III, Thm. 2.2.

By inspection of the previous proofs, what we need is that, $\forall \,\tau \in [0, T]$, $\mathcal W_\tau:
C([0, \tau]) \to C([0, \tau])$ in a uniformly Lipschitz way and
\begin{equation}\label{eq7.3}
\mathcal W_\tau(H^{3/2}(0, \tau)) \subset H^{1/4}(0, \tau) \cap C([0, \tau]),
\end{equation}
which is guaranteed by (C1)-(C3). In suitable circumstances, the mentioned operators map $W^{1,1}(0, \tau)$ into itself boundedly
(see \cite{Vi1}, Chap. IV, Thm. 3.10 and Chap. III, Thm. 2.3). Now, we can easily see that, if $\mathcal W_\tau$ maps $C([0, \tau])$
into itself continuously and $W^{1,1}(0, \tau)$ into itself boundedly, then $\mathcal W_\tau$ maps $C([0, \tau]) \cap BV([0, \tau])$ into itself.
In fact, if $r \in C([0, \tau]) \cap BV([0, \tau])$, then we can construct a bounded sequence $(r_k)_{k \in \N}$  in $W^{1,1}(0, \tau)$,
converging to $r$ uniformly in $[0, \tau]$. From the lower semicontinuity of $V_0^\tau(\cdot)$ in $C([0, \tau])$, we deduce
$$
V_0^\tau(\mathcal W_\tau (r)) \leq \liminf_{k \to \infty}ÊV_0^\tau(\mathcal W_\tau (r_k))  < \infty.
$$
Moreover, we can also observe that condition $\mathcal W_\tau (W^{1,1}(0, \tau)) \subset W^{1,1}(0, \tau)$ implies (\ref{eq7.3}).

Finally,  as $H^{3/2}(0, \tau)) \subset C^\alpha([0, \tau])$ $\forall \,\alpha \in [0, 1)$, and $C^{1/4}([0, \tau])
\subset H^{1/4}(0, \tau) \cap C([0, \tau])$, then we can obtain (\ref{eq7.3}) requiring that $\mathcal W_\tau
(C^\alpha([0, \tau]) \subset C^{1/4}([0, \tau])$, for some $\alpha \in [0, 1)$.
Concerning the Preisach operator, this can be obtained imposing the assumptions indicated in \cite{Vi1}, Chap. IV, Thm. 3.9.
}

\end{remark}

\end{document}